\documentclass[11pt]{article}

\usepackage[utf8]{inputenc} 
\usepackage[T1]{fontenc}
\usepackage{amsmath,amssymb,amsthm}
\usepackage{amsfonts}
\usepackage{url}
\usepackage{ifthen}
\usepackage{graphicx}
\usepackage{xcolor}
\usepackage{calc}
\usepackage{bbm}
\usepackage{dsfont}
\usepackage{centernot}
\usepackage{bm}
\usepackage{enumitem}
\usepackage[colorlinks=false]{hyperref}
\usepackage{authblk}
\usepackage{mathtools}
\usepackage{setspace}
\usepackage{subfigure}
\usepackage{authblk}
\usepackage{algorithm} 
\usepackage{algpseudocode} 
\usepackage{comment} 
\usepackage[margin=1in]{geometry}
\usepackage{minitoc}
\usepackage[toc,page,header]{appendix}

\usepackage{times}
\usepackage[bigdelims,slantedGreek,vvarbb,scaled=1.07]{newtxmath}


\usepackage[backend=bibtex,style=authoryear,natbib=true]{biblatex}

\newtheorem{theorem}{Theorem}
\newtheorem*{theorem*}{Theorem}

\newtheorem{lemma}{Lemma}

\numberwithin{claim}{section}

\theoremstyle{remark}
\newtheorem{remark}{Remark}

\newtheorem{definition}{Definition}

\newtheorem*{definition*}{Definition}
\usepackage{xcolor}
\definecolor{p1}{HTML}{97C697}
\definecolor{p2c}{HTML}{FCF6F7}
\definecolor{p2l}{HTML}{EFF5FB}
\definecolor{cred}{HTML}{F2CCD2}
\definecolor{cgreen}{HTML}{EBFCEB}
\definecolor{grey}{HTML}{BDC3C6}

\usepackage{tikz}
\usepackage{array}
\usepackage{tabularx}
\usepackage{multirow} 
\usepackage{booktabs}
\definecolor{applegreen}{rgb}{0.55, 0.71, 0.0}


\DeclareMathOperator*{\argmin}{\arg\!\min}

\newcommand{\m}{\mathcal}

\newcommand{\RR}{\mathbb{R}}

\newcommand{\CC}{\mathbb{C}}

\newcommand{\diag}{\mathop{\mathrm{diag}}}
\newcommand{\rank}{\mathop{\mathrm{rank}}}
\newcommand{\sign}{\mathop{\mathrm{sign}}}
\newcommand{\tr}{\mathop{\mathrm{tr}}}
\newcommand{\Range}{\mathop{\mathrm{Range}}}

\newcommand{\Expt}{\mathbb{E}}
\newcommand{\eps}{\varepsilon}

\newcommand{\T}{\top}
\newcommand{\IIDDist}{ \overset{\mathrm{i.i.d.}}{\sim} }
\newcommand{\Indic}[1]{\mathds{1}_{#1}}
\newcommand{\BigOh}{O}

\newcommand{\MatL}{\begin{bmatrix}}
\newcommand{\MatR}{\end{bmatrix}}
\newcommand{\iu}{\mathsf{i}}



\newcommand{\Stiel}{{\mathsf{s}}}
\newcommand{\StielEmp}{{\hat{\mathsf{s}}}}
\newcommand{\StielInv}{\Psi}

\newcommand{\SGconst}{K}
\newcommand{\EffDim}{\mathsf{d}_H}
\newcommand{\DimEff}[1]{\mathsf{d}_H(#1)}
\newcommand{\Pm}{P_m}
\newcommand{\PmMinK}{P_m^{-k}}

\newcommand{\hl}{\hat{\lambda}}

\newcommand{\hW}{\hat{W}}

\newcommand{\Tell}{T_{\setminus \ell}}
\newcommand{\Tk}{T_{\setminus k}}
\newcommand{\Hell}{H_{\setminus \ell}}

\newcommand{\AssumAsymp}{\textbf{[Asymp($\tau,H,\xi$)]}}
\newcommand{\AssumAsympZero}{\textbf{[Asymp($\tau,H,m_0,\xi$)]}}

\newcommand{\ProcChooseMIID}{{\it Choose-Sketching-Dimension-IID}}
\newcommand{\ProcChooseLambdaIID}{ {\it Estimate-Inverse-Hessian-IID} }
\newcommand{\ProcLineSearch}{ {\it LineSearch} }

\newcommand{\sfH}{\mathsf{H}}
\newcommand{\sfT}{\mathsf{T}}
\newcommand{\sfN}{\mathsf{N}}

\title{Newton Meets Marchenko-Pastur: Massively Parallel Second-Order Optimization with Hessian Sketching and Debiasing}
\addbibresource{refs.bib}

\author[1]{Elad Romanov \thanks{\texttt{eromanov@stanford.edu}}}
\author[2]{Fangzhao Zhang \thanks{\texttt{zfzhao@stanford.edu}}}
\author[2]{Mert Pilanci \thanks{\texttt{pilanci@stanford.edu}}}

\affil[1]{Department of Statistics, Stanford University}
\affil[2]{Department of Electrical Engineering, Stanford University}

\date{}

\renewcommand \thepart{}
\renewcommand \partname{}

\begin{document}

	\doparttoc 
	\faketableofcontents 

    \maketitle
    
    \begin{abstract}
    	Motivated by recent advances in serverless cloud computing, in particular the ``function as a service'' (FaaS) model, 
we consider the problem of minimizing a convex function in a massively parallel fashion, where communication between workers is limited.
Focusing on the case of a twice-differentiable objective subject to an L2 penalty, we propose a scheme where the central node (server) effectively runs a Newton method, 
offloading its high per-iteration cost---stemming from the need to invert the Hessian---to the workers. 
In our solution, workers produce independently coarse but low-bias estimates of the inverse Hessian, using an adaptive sketching scheme. The server then averages the descent directions produced by the workers, yielding a good approximation for the exact Newton step. The main component of our adaptive sketching scheme is a low-complexity procedure for selecting the sketching dimension, an issue that was left largely unaddressed in the existing literature on Hessian sketching for distributed optimization. Our solution is based on ideas from asymptotic random matrix theory, specifically the Marchenko-Pastur law. For Gaussian sketching matrices, we derive non asymptotic guarantees for our algorithm which are essentially dimension-free. Lastly, when the objective is self-concordant, we provide convergence guarantees for the approximate Newton's method with noisy Hessians, which may be of independent interest beyond the setting considered in this paper. 
    \end{abstract}

	\section{Introduction}
	Consider minimizing a convex, twice-differentiable function $F:\RR^d \to \RR$, 
subject to an L2 regularization penalty:
\begin{equation}\label{eq:Convex-Problem}
	 \min_{\theta \in \RR^d} F(\theta) + \frac{\lambda}{2} \|\theta\|^2 \,,
	\end{equation} 
where $\lambda>0$.
Such problems frequently appear in machine learning and statistics. Common examples include ridge regression, logistic regression with regularization, support vector machines (SVMs) and kernel machines, among others \citep{hastie2009elements}.
	
Consider a scenario where a large number of workers (processors) are available to collaboratively solve \eqref{eq:Convex-Problem}, with the assumptions that each worker (i) has full access to the objective function;
(ii) operates with limited individual computational resources; and (iii) cannot communicate directly with other workers, 
only with a central server, with the latter responsible for orchestrating the work of the workers (a ``star'' network topology). This setting is motivated by recent developments in serverless cloud computing, particularly the "function as a service" (FaaS) model  \citep{jonas2017occupy}. Serverless computing and FaaS are well-suited for computing gradients and Hessians on large datasets, enabling parallelization of these computations across a large number of workers. Additionally, this approach provides resilience to failed or straggler workers, ensuring robust and efficient processing \citep{bartan2019straggler}.
In designing a solution of this setting, we are essentially guided by two key principles:
(a) minimizing the number of communication or worker deployment rounds, with each round utilizing a large number of workers in parallel; and (b) reducing the computational load on individual workers by distributing smaller tasks across many workers concurrently.

This paper proposes and analyzes a 
scheme for solving
\eqref{eq:Convex-Problem} in a massively parallel manner.
In our scheme, the server aims to solve \eqref{eq:Convex-Problem} using Newton's method, a second-order iterative algorithm which is known to converge extremely fast for sufficiently smooth and strongly convex objectives (quadratic convergence rate).
Its fast convergence is attained by incorporating curvature (Hessian) information when searching for a descent direction, and specifically requires an inverse Hessian-gradient product at every iteration. This a priori results in massive per-iteration computational cost: exactly computing the descent direction (assuming the Hessian is accessible) costs practically $O(d^3)$ flops; in high-dimensional problems (large $d$), this may be prohibitively expensive.

We wish to offset the high per-iteration cost of Newton's method, by deploying many workers in parallel to collaboratively compute the Newton direction.
 In the setting we consider, workers cannot easily communicate with one another, and therefore directly inverting the Hessian by is not feasible. 
In our scheme,
each worker independently computes a coarse but \textbf{unbiased} estimate of the exact Newton direction, using randomized sketching. These estimates are then aggregated (averaged) at the server, producing a good approximation of the true Newton step.

\paragraph{Newton's method:}
The exact Newton method for minimizing \eqref{eq:Convex-Problem} has the following form:
\begin{align}
	\theta_{t} = \theta_{t-1} - \alpha_t W_t g_t,\qquad (t=1,\ldots,T)\,, \label{eq:Newton-Exact-0}
\end{align}
where $\alpha_t$ is a step size (typically chosen by line search) and the gradient and inverse Hessian are
\begin{align}
	g_t &:= \nabla F(\theta_{t-1}) + \lambda\theta_{t-1}, \label{eq:Newton-Exact-1}\\
	W_t &:= (H_t + \lambda I)^{-1},\; H_t := \nabla^2 F(\theta_{t-1}) \label{eq:Newton-Exact-2}\;.
\end{align}
For sufficiently smooth convex objectives $F$, Newton's method is known to converge at a quadratic rate: $T=O(\log\log(1/\eps))$ iterations are sufficient to approach the minimum within error $\eps$, as opposed to $T=O(\log(1/\eps))$ for gradient descent and its accelerated variants; see \cite[Chapter 9]{boyd2004convex}. A fast convergence rate is highly desirable in our setting, as the number of iterations corresponds directly to communication/worker deployment rounds.

In our scheme, the server effectively runs the Newton method with an approximate inverse Hessian: 
\begin{equation}
	\theta_t = \theta_{t-1} - \alpha_t \bar{W}_t g_t, 
\end{equation}
where $\bar{W}$ is obtained by averaging $q$ independent local estimates produced by the workers:\footnote{In practice, the workers send the vector $\hat{W}^{(k)}g_t \in \RR^{d}$ to the server, rather the whole matrix $\hat{W}^{(k)} \in \RR^{d\times d}$.}
\begin{equation}
	\bar{W} := \frac{1}{q}\sum_{k=1}^q \hat{W}_t^{(k)} .
\end{equation}

\paragraph{Debiased inverse Hessian sketching:}
We wish to avoid directly inverting the full $d\times d$ Hessian, which practically (when done exactly) costs $O(d^3)$. To this end, we have every worker compute, in parallel, a cheap but low-bias estimate of the inverse Hessian by sketching.
Specifically, at every deployment round, every worker independently samples a random sketching matrix $S\in \RR^{m\times d}$ (where $m<d$), and approximates the inverse Hessian by its sketched version
\begin{equation} \label{eq:feature-sketch-Def}
	\hat{W} = S^\T (SHS^\T + \tilde{\lambda}I)^{-1} S ,
\end{equation} 
where $\tilde{\lambda}>0$ is a \emph{modified} regularization parameter. Sketches of the type in \eqref{eq:feature-sketch-Def} have been studied before in the literature (also in the context of high-dimensional regression and optimization). Variations have appeared under various names: ``dual sketching'' {\citep{zhang2013recovering}}, ``sketch-and-project'' {\citep{gower2015randomized}}, ``right-sketch'' \citep{murray2023randomized}, and ``feature sketching'' {\citep{patil2023asymptotically}}.
Note that the cost of forming $\hat{W}g$ is $O(m^3+M)$, where $M$ is the cost of multiplying $SHS^\T$. 
In this paper, we exclusively consider the case where $S$ is a random dense matrix with i.i.d. entries.\footnote{By ``dense'', we mean that $S$ has $\Omega(md)$ many non-zero entries typically, though it's not necessary that all the entries, or even a majority of them, are non-zero.} In this case $M=d^2 m$, which dominates the overall per-iteration cost. Naturally, we are particularly interested in settings where $m$ can be chosen very small compared to $d$, so that cost, $O(d^2m)$, is substantially smaller than $O(d^3)$.

\begin{remark}
	In this paper we {do not} assume any particular form for $H$ (i.e., that it is readily factorizable $H=X^\T X$, $X\in \RR^{N\times d}$, a setting which is quite common in the sketching and distributed optimization literature; for example {\citep{pilanci2015newton}}). In fact, sketched measurements $SHS^\T$ may be obtainable even in settings when $F$ is not available in analytic form, but only through a program (oracle) that calculates it - for example by automatic differentiation  {\citep{paszke2017automatic}}.
\end{remark}

A key question is how to choose $m$ (and in accordance $\tilde{\lambda}$). Define the $\lambda$- effective dimension of $H$,\footnote{$\EffDim(\lambda)$ is also called the ``effective degrees of freedom'' in ridge regression \citep{hastie2009elements}, when the design matrix $X\in\RR^{N\times d}$ is $H=X^\T X$. }
\begin{equation} 
	\EffDim(\lambda) := \tr(H(H + \lambda I)^{-1}).
\end{equation} 
As we shall see in Section~\ref{sec:RMT-Background}, results from asymptotic random matrix theory imply that, when $S$ is an i.i.d. dense sketching matrix and \textbf{provided} that $m>\EffDim(\lambda)$, the choice
\begin{equation}\label{eq:Oracle-Reg}
	\tilde{\lambda} := \lambda\left( 1 - \frac{1}{m} \EffDim(\lambda) \right)
\end{equation}
results in $\hat{W}$ which is a \emph{low-bias} estimator of the true inverse Hessian $W$. Note that exactly computing the effective dimension is resource-intensive, and essentially requires inverting the Hessian---the very operation we wanted to avoid. A key component of our proposed scheme is a novel low-complexity procedure, based on ideas from asymptotic random matrix theory, for adaptively selecting $m=O(\EffDim(\lambda))$ and $\tilde{\lambda}$, by only observing sketched measurements of $H$. 
\begin{remark}
	When $H$ is effectively low-rank, that is exhibits fast spectral decay, $\EffDim(\lambda)$ may be much smaller than $d$. For example, when the spectrum has power decay $\lambda_k(H)\propto k^{-\alpha}$ and $\lambda=O(1)$, $\EffDim(\lambda)=O(d^{1-\alpha})$ when $\alpha<1$, $\EffDim(\lambda)=O(\log d)$ when $\alpha=1$ and $\EffDim(\lambda)=O(1)$ when $\alpha>1$. Hessian spectral with power decay are common in regression problems involving kernels {\citep{wainwright2019high, yang2017randomized}}.
\end{remark}

\paragraph{Outline and Contributions:} 
\begin{itemize}[leftmargin=*,noitemsep,topsep=0pt]
	\item 
	In Section~\ref{sec:RMT-Background} we review fundamental results from asymptotic random matrix theory (RMT), specifically the Marchenko-Pastur law and its formulation using deterministic equivalents, and demonstrate how they lead naturally to an asymptotically unbiased inverse Hessian estimator, from i.i.d. sketching matrices $S$. The connection to the Marchenko-Pastur law treats in a uniform framework---and recovers almost immediately---earlier results on Hessian sketch debiasing for distributed Newton methods 
	\citep{derezinski2020debiasing,zhang2023optimal}.
	\item 
	In Section~\ref{sec:Procedure} we propose a \emph{novel data-driven method} for selecting $m$ and $\tilde{\lambda}$ using only the sketched Hessian, avoiding the need to compute the effective dimension exactly. The approach is a natural outgrowth of the RMT framework described previously, and is fully adaptive: no tuning parameters are needed.
	Section~\ref{sec:Nonasymptotic} further provides {non-asymptotic} error guarantees for our method, focusing on the case where $S$ is an i.i.d. {Gaussian} matrix. Notably, the error bounds obtained in this setting are essentially dimension-free and independent of the Hessian’s condition number.
	\item 
	In Section~\ref{sec:Quasi-Newton} we derive convergence guarantees for the Newton method with inexact Hessians; to wit, when the exact inverse Hessian $W$ in \eqref{eq:Newton-Exact-0}-\eqref{eq:Newton-Exact-2} is replaced by an $\eta$-accurate estimate $\bar{W}$. By ensuring a sufficiently small $\eta$, the algorithm achieves an arbitrarily fast linear convergence rate. Our results are proved for self-concordant functions $F(\cdot)$, and to our knowledge are novel and may be of independent interest. Combined with the results of the previous section, we obtain a non-asymptotic, end-to-end convergence guarantee for the entire parallel method (with Gaussian sketches), which remarkably does not depend on the condition number of the Hessians.
	 \item 
	  Lastly, Section~\ref{sec:Experiments} is dedicated to experiments, on both synthetic and real-world data.
\end{itemize}

	\subsection{Related Works}
	Distributed second-order optimization methods have been widely studied for large-scale machine learning. Notable examples include DANE \citep{shamir2013communication}, which addresses communication efficiency, AIDE \citep{reddi2016aide}, designed for accelerated convergence, and DiSCO \citep{zhang2015disco}, which focuses on distributed second-order methods for convex optimization problems. The most closely related body of literature to our work centers around the averaging of inexact Newton steps, which has proven effective when combined with randomized linear algebra. GIANT \citep{wang2017giant} uses the average of local Newton steps as the global step and Determinantal Averaging \citep{derezinski2020debiasing} provides an unbiased averaging technique for distributed Newton methods. Surrogate sketching \citep{derezinski2020debiasing} was introduced for distributed Newton's method, revealing improvements via a simple shrinkage strategy.
The work \citep{zhang2023optimal} introduced an optimal debiasing method for distributed Newton's method when the effective dimension of the Hessian is known.
Lastly, there exists a large related corpus of works in optimization involving Hessian subsampling or sketching, for cases where the Hessian admits a good low-rank approximation, though not necessarily in a distributed setting. Examples include \citep{roosta2019sub,
	gower2019rsnrandomizedsubspacenewton,frangella2022sketchysgd}, among others.
	
	\section{Debiasing from I.I.D Sketches and Random Matrix Theory}
	\label{sec:RMT-Background}
	This section concerns inverse sketched Hessian debiasing for sketches $S\in \RR^{m\times d}$ which have i.i.d. entries with mean zero $\Expt[S_{ij}]=0$. We normalize the variance as $\Expt[S_{ij}]^2=1/m$; this normalization is such that $\Expt[S^\T S]=I$, equivalently, $S:\RR^d \to \RR^m$ is an isometry in expectation.

Let $H\succeq 0$ be a positive definite matrix (e.g., the full Hessian at the current iteration). The spectral properties of the matrix $SHS^\T \in \RR^{m\times m}$, which appears in the sketch \eqref{eq:feature-sketch-Def}, are of utmost importance for what follows. Note that this matrix has the same non-zero eigenvalues\footnote{This is a standard consequence of the elementary determinant identity $\det(I+AB)=\det(I+BA)$.} as the matrix $H^{1/2}S^\T S H^{1/2}$: the latter is a \emph{sample covariance} matrix (making the analogy explicit: $m$ is the number of samples, $H$ is the population covariance, and $x_i:=\sqrt{m}H^{1/2}r_i$ are independent ``samples'', $r_1,\ldots,r_m$ being the rows of $S$); accordingly, $SHS^\T$ is sometimes called the companion sample covariance. 
Sample covariance matrices, and their spectral properties, have been a central object of investigation in random matrix theory (RMT); cf. \citep{bai2010spectral}. A foundational result is the {Marchenko-Pastur law} \citep{marvcenko1967distribution}, 
which describes the high-dimensional global behavior of the eigenvalues of $H^{1/2}S^\T S H^{1/2}$ (limiting spectral distribution). 

\paragraph{The Marchenko-Pastur equation:} For\footnote{Here $\CC^+$ is the complex upper halfplane, $\Im(z)>0$.} $z\in \CC^+$ let $\Stiel(z)\in\CC^+$ be the unique solution of
\begin{equation}\label{eq:MP-Eq}
	\frac{1}{\Stiel(z)} = - z + \frac1m \tr\left( H(I+\Stiel(z)H)^{-1} \right) 
\end{equation}
subject to the constraint $\Stiel(z)\in \CC^+$. It is known that (see \citep{bai2010spectral}) $\Stiel(z)$ is the Stieltjes transform\footnote{For a finite measure $\mu$ on $\RR$, its Stieltjes transform is the function $\Stiel_\mu:\CC^+\to \CC^+$ given by $\Stiel_\mu(z)=\int \frac{1}{x-z}d\mu(x)$. Using the Stieltjes inversion formula, the measure $\mu$ can be recovered from $\Stiel_\mu$ in the following sense: for every $\varphi(\cdot)$ bounded and continuous, $\int \varphi(t)d\mu(t) = \lim_{\eta\to 0+}\frac1\pi \int \varphi(x)\Im\Stiel_\mu(x+\iu \eta)dx$. } of a \emph{compactly supported probability measure}, whose support lies in $\RR_{\ge 0}$; this measure is the \textbf{(companion) Marchenko-Pastur law} associated with the population covariance $H$ (and depends only on its eigenvalues) and the number of samples $m$. Moreover: 1) The function $z\mapsto \Stiel(z)$ extends continuously to $\RR$, except possibly at $z=0$; 2) for $x\in \RR$ outside the support, the function $x\mapsto \Stiel(x)$ is increasing; in particular, for negative $x<0$, it is positive and increasing. In this paper, we shall restrict our attention to only negative real valued $z$ in  \eqref{eq:MP-Eq}, understanding that it holds also for $z<0$ real. For more details, cf. \citep{silverstein1995analysis,bai2010spectral}.
Finally, note that the function $z\mapsto \Stiel(z)$ is invertible, the inverse function given by 
\begin{equation}\label{eq:Psi-Def}
	\Psi(\Stiel) = \frac{1}{m}\tr(H(I+\Stiel H)^{-1})- \frac{1}{\Stiel}.
\end{equation}
The Marchenko-Pastur equation \eqref{eq:MP-Eq} may equivalently be written as $z=\Psi(\Stiel(z))$.

In RMT, it is typical to consider the \emph{proportional growth} asymptotic regime: $m,d\to\infty$, $m=\Theta(d)$. 
\begin{definition}
	For (small) constant $\tau>0$, assumption set \AssumAsymp~ holds if: (1) bounded aspect ratio (proportional growth): $\tau<\frac{m}{d}<1/\tau$; (2) bounded operator norm: $\|H\|<1/\tau$; (3) $H$ is invertible; (4) $H$ has non-degenerate spectrum: at least $\tau d$ eigenvalues are larger than $\tau$.(5) $\sqrt{m}S_{ij}\sim \xi$, with their law $\xi$, satisfying: $\Expt[\xi]=0$, $\Expt[\xi^2]=1$, and $\Expt[|\xi|^k]<\infty $ for all $k>0$.
%
%
\label{def:Asymp}
\end{definition}

Denote by $\StielEmp(z)$ the Stieltjes transform of the empirical eigenvalue distribution of $SHS^\T$:
\begin{equation}\label{eq:StielEmp-Def}
	\StielEmp(z) = \frac1m \tr (SHS^\T -z I)^{-1} = \frac1m \sum_{i=1}^m \frac{1}{\lambda_i(SHS^\T)-z} .
\end{equation}
Note that $\StielEmp(z)$ is a random function, since $S$ is random. The following is a (quantitative) statement of the Marchenko-Pastur law \citep{knowles2017anisotropic}; for simplicity, we focus on the case $z<0$.
\begin{theorem}\label{thm:MP-Law}
	Set $\eta,D>0$. Assume \AssumAsymp.  W.p. $1-O(m^{-D})$, simultaneously for all $-1/\tau \le z \le -\tau$,
	\begin{equation}
		|\StielEmp(z)-\Stiel(z)| = O\left( m^{-1/2+\eta} \right).
	\end{equation}
	The constants in the $O(\cdot)$ notation depend on $\tau,\eta,D$.
\end{theorem}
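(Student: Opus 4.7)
The theorem is a quantitative restatement of the classical Marchenko-Pastur law for the companion sample covariance matrix $SHS^\T$. The plan is to invoke a pointwise local law from the literature and then upgrade to uniform convergence via a standard net argument.

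First, I would reduce to the standard sample covariance setting. The non-zero eigenvalues of $SHS^\T \in \RR^{m\times m}$ coincide with those of $H^{1/2}S^\T S H^{1/2} \in \RR^{d\times d}$, which is a sample covariance matrix formed from $m$ samples of dimension $d$ with population covariance $H$. The companion Marchenko-Pastur measure, whose Stieltjes transform is the $\Stiel(z)$ defined by \eqref{eq:MP-Eq}, is precisely the deterministic equivalent of the empirical spectral distribution of $SHS^\T$, so the problem reduces to controlling the fluctuation of the empirical Stieltjes transform around its deterministic counterpart.

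Second, I would invoke the pointwise local Marchenko-Pastur law as established by \citep{knowles2017anisotropic}. For any fixed $z \in [-1/\tau,-\tau]$, the eigenvalues of $SHS^\T$ are non-negative, so $\dist(z, \mathrm{spec}(SHS^\T)) \ge |z| \ge \tau$; this places $z$ at macroscopic distance from the spectrum, a regime where the anisotropic local law is sharpest. Under \AssumAsymp{}---in particular, the bounded aspect ratio, bounded $\|H\|$, non-degenerate spectrum of $H$, and all-moments bound on $\xi$---the cited local law directly yields, for any fixed $\eta,D>0$, a bound $|\StielEmp(z)-\Stiel(z)| = O(m^{-1/2+\eta})$ (in fact $O(m^{-1+\eta})$) with probability $1-O(m^{-D})$.

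Third, I would upgrade the pointwise bound to a uniform one on $[-1/\tau,-\tau]$ via an $\epsilon$-net argument. Both $\StielEmp(z)$ and $\Stiel(z)$ are Lipschitz in $z$ on this interval, with a \emph{deterministic} Lipschitz constant bounded by $\tau^{-2}$, because for $z<0$,
\begin{equation*}
    \left|\frac{d}{dz}\StielEmp(z)\right| \;=\; \frac{1}{m}\sum_{i=1}^m \frac{1}{(\lambda_i(SHS^\T)-z)^2} \;\le\; \tau^{-2},
\end{equation*}
and analogously for $\Stiel$. Choosing an $m^{-1}$-spaced net in $[-1/\tau,-\tau]$ (of size $N=O(m)$), applying the pointwise bound at each net point with a union bound, and extending to all $z$ by Lipschitz continuity yields the claimed uniform bound, with the $O(m^{-1})$ interpolation error absorbed into the $m^{-1/2+\eta}$ term.

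The main obstacle is essentially bibliographic: one must verify that the hypotheses of \AssumAsymp{}, in particular the spectral non-degeneracy of $H$ and the all-moments condition on $\xi$, align with those required by the cited anisotropic local law (which is typically stated under slightly different but comparable conditions). Once this alignment is checked, the argument is a quotation plus a mild net argument, and no substantial new estimates are required.
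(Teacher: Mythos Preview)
Your proposal is correct and takes essentially the same approach as the paper, which simply cites Theorem~3.7 and Remark~3.10 of \citep{knowles2017anisotropic} directly. Your net argument for uniformity in $z$ is sound but superfluous, since the anisotropic local law in that reference is already stated uniformly over the spectral parameter (via the stochastic domination framework), so the theorem follows by straight quotation.
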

\begin{proof}
	See Theorem 3.7 and Remark 3.10 in \citep{knowles2017anisotropic}.
\end{proof}

\paragraph{Deterministic equivalent for the sample covariance:}
The Marchenko-Pastur law (Theorem~\ref{thm:MP-Law}) describes the eigenvalue distribution of $SHS^\T$, equivalently $H^{1/2}S^\T S H^{1/2}$, and relates it to that of $H$ (the connection made via the Marchenko-Pastur equation \eqref{eq:MP-Eq}). One can show something stronger: the entire (random) resolvent $(H^{1/2}S^\T S H^{1/2}-zI)^{-1}$ is in fact close (entrywise, \emph{not} in operator norm) to the resolvent of $H$ (which is deterministic), with appropriate shifting and shrinkage. 
A result of the following kind is sometimes referred to in the RMT literature as a ``deterministic equivalent'', cf. \citep{couillet2011random,bai2010spectral}. 
\begin{theorem}\label{thm:Determinstic-Cov}
	Set $\eta,D>0$. Assume \AssumAsymp. For any $u,v\in\RR^d$, $\|u\|,\|v\|=1$, the following holds.	
	W.p. $1-O(m^{-D})$, simultaneously for all $-1/\tau^{-1}\le z \le -\tau$,
	\begin{equation}\label{eq:local-law}
		u^\T (H^{1/2} S^\T S H^{1/2} - z I)^{-1}  v = u^\T  (-z\Stiel(z)H - zI)^{-1} v + O(m^{-1/2+\eta}).
	\end{equation}
	Above, the constants in the $O(\cdot)$ notation depend on $\tau,\eta,D$ and are uniform in $u,v$. In particular,
	\begin{equation}
		\left\|\Expt\left[(H^{1/2} S^\T S H^{1/2} - z I)^{-1}\right] - (-z\Stiel(z)H - zI)^{-1}\right\| = O(m^{-1/2+\eta})
	\label{eq:local-law-1}
	\end{equation} 
	\begin{proof}
		See Eq. (3.3) and Theorem 3.7 in \citep{knowles2017anisotropic}.
	\end{proof}
\end{theorem}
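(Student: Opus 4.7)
I would prove the statement by establishing an anisotropic local law for the resolvent $G(z) := (H^{1/2}S^\T S H^{1/2} - zI)^{-1}$: for each fixed pair of unit vectors $u, v$ and $z \in [-1/\tau, -\tau]$, that $u^\T G(z) v \approx u^\T \Pi(z) v$ with error $O(m^{-1/2+\eta})$ and probability $1 - O(m^{-D})$, where $\Pi(z) := -z^{-1}(I + \Stiel(z)H)^{-1}$. The isotropic bound \eqref{eq:local-law} then follows by a uniformity-in-$z$ argument, and the operator-norm expectation bound \eqref{eq:local-law-1} follows by taking expectations and applying an $\varepsilon$-net of the sphere to the deterministic matrix $\Expt[G(z)] - \Pi(z)$ (a constant-factor loss). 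A key preliminary is that, by Theorem~\ref{thm:MP-Law} and the Marchenko-Pastur equation \eqref{eq:MP-Eq}, the normalized trace $\mathsf{t}(z) := m^{-1}\tr(HG(z))$ concentrates around $\mathsf{t}^\star(z) = -1 - 1/(z\Stiel(z))$, so that $(1+\mathsf{t}(z))^{-1} = -z\Stiel(z) + O(m^{-1/2+\eta})$.

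The core step is to derive an approximate self-consistent equation for $u^\T G(z) v$. Writing $H^{1/2}S^\T S H^{1/2} = \sum_{i=1}^m x_i x_i^\T$ with $x_i := H^{1/2}s_i$, and letting $G^{(i)}(z)$ denote the resolvent with $x_i x_i^\T$ removed, Sherman-Morrison gives $G(z)x_i = G^{(i)}(z)x_i/(1+\beta_i)$ with $\beta_i := x_i^\T G^{(i)}(z)x_i$, and together with the identity $(H^{1/2}S^\T S H^{1/2})G(z) = I + zG(z)$ yields
\begin{equation*}
u^\T G(z) v = -\tfrac{1}{z}u^\T v + \tfrac{1}{z}\sum_{i=1}^m \frac{u^\T G^{(i)}(z)x_i \cdot x_i^\T v}{1+\beta_i}.
\end{equation*}
Since $s_i$ is independent of $G^{(i)}(z)$, concentration of quadratic and bilinear forms (Hanson-Wright and its asymmetric variant) gives $\beta_i = m^{-1}\tr(HG^{(i)}(z)) + O(m^{-1/2+\eta})$, and the bilinear sum can be reduced to $u^\T HG(z) v + O(m^{-1/2+\eta})$ after swapping $G^{(i)} \to G$ at cost $O(1/m)$ via the rank-one perturbation identity (using $\|G(z)\| \le 1/\tau$). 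Substituting $(1+\mathsf{t}(z))^{-1} \approx -z\Stiel(z)$ gives the linear relation $u^\T G(z) v + \Stiel(z) u^\T HG(z) v = -z^{-1}u^\T v + O(m^{-1/2+\eta})$, which inverts to $u^\T G(z) v = u^\T \Pi(z) v + O(m^{-1/2+\eta})$ for each fixed $(u, v, z)$.

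Uniformity in $z \in [-1/\tau, -\tau]$ then follows from a polynomially dense grid and union bound, together with the Lipschitz estimate $\|\partial_z G(z)\| = \|G(z)^2\| \le 1/\tau^2$ (and analogously for $\Pi(z)$). For \eqref{eq:local-law-1}, fixing $u, v$ and taking expectation of the isotropic bound produces the required bound on $u^\T(\Expt[G(z)] - \Pi(z))v$, with the bad-event contribution controlled using the deterministic bound $\|G(z) - \Pi(z)\| \le 2/\tau$ together with probability $O(m^{-D})$ for $D$ chosen large. The passage from the pointwise-in-$(u,v)$ bound to operator norm is a standard constant-$\varepsilon_0$-net argument on the unit sphere.

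\textbf{Main obstacle.} I anticipate the principal difficulty will be the anisotropic concentration under the weak-tail Assumption (5) of \AssumAsymp: all moments finite, but no a priori sub-Gaussian or sub-exponential control. The standard remedy is a truncation at level $m^{\eta/10}$ combined with polynomial-moment Markov bounds on the truncated quantities, yielding the $O(m^{-1/2+\eta})$ error with probability $1 - O(m^{-D})$ for any $D$. A related subtlety is the \emph{stability} of the self-consistent equation: per-$i$ errors would naively accumulate to $O(m^{1/2+\eta})$, so one typically must bootstrap from a crude a priori bound up to the sharp estimate, exploiting cancellations across the sum over $i$.
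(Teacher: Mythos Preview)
The paper does not prove this statement; it simply cites Theorem~3.7 and Eq.~(3.3) of \citep{knowles2017anisotropic}. Your proposal instead sketches an actual proof of the anisotropic local law, following the standard Schur complement / self-consistent equation methodology---which is, in essence, the approach of the cited reference. In the restricted regime here (real $z\in[-1/\tau,-\tau]$, bounded away from the spectrum), the argument you outline is considerably simpler than the full complex-$z$ local law near the spectral edge, and your sketch is essentially correct. Two small remarks: (i) in the derivation of the linear relation, the expectation of $u^\T G^{(i)}x_i\cdot x_i^\T v$ over $r_i$ yields $m^{-1}u^\T G^{(i)}Hv$, not $u^\T HG v$; the distinction matters only cosmetically since one ultimately solves for $u^\T G v$ with $u$ ranging over all unit vectors. (ii) For \eqref{eq:local-law-1} no $\varepsilon$-net is needed: since the bound $|u^\T(\Expt[G(z)]-\Pi(z))v|\le Cm^{-1/2+\eta}$ holds with constants uniform in $u,v$, and the matrix in question is deterministic, the operator norm bound follows by directly taking the supremum over unit $u,v$.
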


\paragraph{Deterministic equivalent for the sketched Hessian:} Recently, a deterministic equivalent for the sketched matrix \eqref{eq:feature-sketch-Def} was obtained by \citep{lejeune2022asymptotics}. We cite a quantitative version of their result \citep[Theorem 4.1]{lejeune2022asymptotics}.
\begin{theorem}\label{thm:Deterministic-Sketch}
	Set $\eta,D>0$. Assume \AssumAsymp, and furthermore  $\lambda_{\rank(H)}(H)\ge \tau$. For any $u,v\in\RR^d$, $\|u\|,\|v\|=1$, w.p. $1-O(m^{-D})$, simultaneously for all $-1/\tau\le z \le -\tau$,
	\begin{equation}\label{eq:lejeune}
		u^\T S^\T (SHS^\T- z I)^{-1}Sv = u^\T  (H - (\Stiel(z))^{-1} I)^{-1} v + O(m^{-1/2+\eta}).
	\end{equation}
	In particular,
	\begin{equation}\label{eq:thm:Deterministic-Sketch-2}
		\left\|\Expt\left[S^\T (SHS^\T- z I)^{-1}S\right] - (H - (\Stiel(z))^{-1} I)^{-1}\right\| = O(m^{-1/2+\eta}).
	\end{equation} 
\end{theorem}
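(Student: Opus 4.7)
My strategy is to reduce the claim to Theorem~\ref{thm:Determinstic-Cov}, the deterministic equivalent for the sample-covariance resolvent, via the standard ``push-through'' (or ``swap'') identity. Set $A := SH^{1/2} \in \RR^{m\times d}$, so that $AA^\T = SHS^\T$ and $A^\T A = H^{1/2}S^\T S H^{1/2}$. The identity $A^\T(AA^\T - zI) = (A^\T A - zI)A^\T$ yields, upon inversion and right-multiplication by $A$,
\begin{equation*}
A^\T(AA^\T - zI)^{-1}A \;=\; (A^\T A - zI)^{-1} A^\T A \;=\; I + z(A^\T A - zI)^{-1}.
\end{equation*}
Translating back to $S$ and $H$ and multiplying both sides by $H^{-1/2}$ (which is well-defined with $\|H^{-1/2}\| \le \tau^{-1/2}$ under the assumption $\lambda_{\rank(H)}(H) \ge \tau$) gives the key identity
\begin{equation*}
S^\T(SHS^\T - zI)^{-1} S \;=\; H^{-1} + z\, H^{-1/2}\bigl(H^{1/2}S^\T S H^{1/2} - zI\bigr)^{-1} H^{-1/2}.
\end{equation*}

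Next, I apply Theorem~\ref{thm:Determinstic-Cov} to the quadratic form obtained by feeding the rescaled vectors $u' := H^{-1/2}u$ and $v' := H^{-1/2}v$, whose Euclidean norms are at most $\tau^{-1/2}$ (so they can be normalized at the cost of a bounded multiplicative constant). Since $|z| \le \tau^{-1}$ and $\|H^{-1/2}\| \le \tau^{-1/2}$ are uniformly bounded, the $O(m^{-1/2+\eta})$ error rate is preserved under the pre- and post-multiplications; the uniformity over $z \in [-1/\tau,-\tau]$ is precisely what Theorem~\ref{thm:Determinstic-Cov} delivers. What remains is pure algebra: using that $H$ commutes with $(\Stiel(z)H+I)^{-1}$, one simplifies $H^{-1} + z\cdot H^{-1/2}(-z\Stiel(z)H - zI)^{-1}H^{-1/2}$ to $H^{-1} - (\Stiel(z)H+I)^{-1}H^{-1} = \Stiel(z)(\Stiel(z)H + I)^{-1}$, which rearranges to the claimed form $(H - (\Stiel(z))^{-1}I)^{-1}$ (modulo sign conventions for $\Stiel$).

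For the operator-norm bound on the expectation in \eqref{eq:thm:Deterministic-Sketch-2}, I would use a two-step argument. First, convert the high-probability entrywise bound to a high-probability operator-norm bound via an $\eps$-net on the unit sphere (of size $(3/\eps)^d$) combined with a union bound, choosing $\eps$ polynomial in $m$. Second, to pass to the expectation, split over the high-probability ``good event'' of Theorem~\ref{thm:Determinstic-Cov} and its complement; on the bad event, use the crude a priori bound $\|S^\T(SHS^\T - zI)^{-1}S\| \le |z|^{-1}\|S^\T S\|$ together with polynomial moment bounds on $\|S\|$ (available under the moment assumption on $\xi$) to keep the tail contribution negligible. \emph{The main obstacle} is this final step---preserving the nearly dimension-free $O(m^{-1/2+\eta})$ rate when passing from a weak-sense entrywise local law to an operator-norm bound on the expectation---a classical subtlety in random matrix theory that requires careful bookkeeping of the exceptional event and deterministic a priori bounds.
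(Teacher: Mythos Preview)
Your derivation of \eqref{eq:lejeune} is essentially identical to the paper's: both use the push-through identity $A^\T(AA^\T-zI)^{-1}A = I + z(A^\T A - zI)^{-1}$ with $A=SH^{1/2}$, apply Theorem~\ref{thm:Determinstic-Cov} to the rescaled vectors $H^{-1/2}u,\,H^{-1/2}v$ (whose norms are $O(\tau^{-1/2})$ under the assumption $\lambda_{\rank(H)}(H)\ge\tau$), and then simplify algebraically. The paper writes the chain of equalities slightly differently but the content is the same.

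Your plan for \eqref{eq:thm:Deterministic-Sketch-2}, however, has a genuine gap. An $\eps$-net on $\mathcal{S}^{d-1}$ has cardinality $(O(1/\eps))^d$; under \AssumAsymp\ one has $d=\Theta(m)$, so with $\eps$ polynomial in $m$ the net size is $\exp(\Theta(m\log m))$. The per-point failure probability from Theorem~\ref{thm:Determinstic-Cov} is only $O(m^{-D})=\exp(-D\log m)$, so no fixed $D$ survives the union bound. You correctly flag this step as the ``main obstacle,'' but the route you propose does not close it.

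The fix is much simpler and requires no net at all: the push-through identity you already wrote,
\[
S^\T(SHS^\T-zI)^{-1}S \;=\; H^{-1} + z\,H^{-1/2}\bigl(H^{1/2}S^\T S H^{1/2}-zI\bigr)^{-1}H^{-1/2},
\]
is a \emph{deterministic} matrix identity. Taking expectations on both sides and then invoking the operator-norm bound \eqref{eq:local-law-1} of Theorem~\ref{thm:Determinstic-Cov} directly (conjugated by the bounded matrix $H^{-1/2}$ and scaled by the bounded $|z|$) yields \eqref{eq:thm:Deterministic-Sketch-2} immediately, followed by the same algebraic simplification you already carried out. This is implicitly what the paper does: its proof addresses only \eqref{eq:lejeune}, with \eqref{eq:thm:Deterministic-Sketch-2} inherited from \eqref{eq:local-law-1} via the same deterministic reduction.
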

\begin{proof}
	The result is a consequence of Theorem~\ref{thm:Determinstic-Cov}; for completeness, let us repeat the argument of \citep{lejeune2022asymptotics}, assuming $H$ is full rank (see their paper for a generalization). Denote $\tilde{u}:=H^{-1/2}u$, $\tilde{v}:=H^{-1/2}v$, so that $\|\tilde{u}\|,\|\tilde{v}\|\le 1/\sqrt{\tau}=O(1)$. We have w.p. $1-O(m^{-D})$,
	\begin{align*}
		&u^\T S^\T (SHS^\T- z I)^{-1}Sv 
		= \tilde{u}^\T H^{1/2}S^\T (SHS^\T- z I)^{-1}SH^{1/2} \tilde{v} \\
		&\quad= \tilde{u}^\T  (H^{1/2}S^\T S H^{1/2}- z I)^{-1}H^{1/2}S^\T S H^{1/2} \tilde{v} 
		= \tilde{u}^\T \tilde{v} + z \tilde{u}^\T  (H^{1/2}S^\T S H^{1/2}- z I)^{-1} \tilde{v}\\
		&\quad= \tilde{u}^\T \tilde{v} - \tilde{u}^\T (\Stiel(z)H+I)^{-1} \tilde{v} + O(m^{-1/2+\eta}) = u^\T(H+(\Stiel(z))^{-1}I)^{-1} v + O(m^{-1/2+\eta}).
	\end{align*} 
\end{proof}
	
	\section{An Adaptive Sketching and Debiasing Procedure}
	\label{sec:Procedure}
	Theorem~\ref{thm:Deterministic-Sketch} suggests a clear path for obtaining a low-bias estimate of the inverse Hessian from the sketch \eqref{eq:feature-sketch-Def}, namely: one should choose $\tilde{\lambda}>0$ such that $\Stiel(-\tilde{\lambda})=1/\lambda$, \emph{when such a root exists}. 
\begin{lemma}
	A solution $\tilde{\lambda}>0$ satisfying $\Stiel(-\tilde{\lambda})=1/\lambda$ exists if and only if $m>\EffDim(\lambda)$. When this is the case, $\tilde{\lambda}$ is given by \eqref{eq:Oracle-Reg}.
\end{lemma}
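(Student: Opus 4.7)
The plan is to exploit the inverse function $\Psi$ defined in \eqref{eq:Psi-Def}, which the paper notes allows us to rewrite the Marchenko--Pastur equation as $z = \Psi(\Stiel(z))$. Applying this identity with $z = -\tilde{\lambda}$, the equation $\Stiel(-\tilde{\lambda}) = 1/\lambda$ is equivalent to $-\tilde{\lambda} = \Psi(1/\lambda)$. So the entire lemma reduces to evaluating $\Psi$ at the point $1/\lambda$.

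Next I would substitute $\Stiel = 1/\lambda$ into the explicit formula \eqref{eq:Psi-Def}. A direct algebraic manipulation gives
\[
\Psi(1/\lambda) \;=\; \frac{1}{m}\tr\!\bigl(H(I+H/\lambda)^{-1}\bigr) - \lambda \;=\; \frac{\lambda}{m}\tr\!\bigl(H(\lambda I + H)^{-1}\bigr) - \lambda \;=\; \lambda\Bigl(\tfrac{\EffDim(\lambda)}{m}-1\Bigr),
\]
using the definition $\EffDim(\lambda) = \tr(H(H+\lambda I)^{-1})$. Setting $-\tilde{\lambda}$ equal to this expression immediately yields $\tilde{\lambda} = \lambda(1 - \EffDim(\lambda)/m)$, which is precisely \eqref{eq:Oracle-Reg}. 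The constraint $\tilde{\lambda}>0$ is therefore equivalent to $\EffDim(\lambda)/m < 1$, proving the claimed ``if and only if'' as well as the formula.

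For the converse implication (existence of the root when $m > \EffDim(\lambda)$), I would appeal to the properties of $\Stiel$ recalled in the paragraph preceding the lemma: the companion MP measure is compactly supported in $\RR_{\ge 0}$, so $z \mapsto \Stiel(z)$ extends continuously to all $z<0$, where it takes positive values and is strictly increasing; in particular $\Stiel$ is a bijection from $(-\infty,0)$ onto its image in $(0,\infty)$, with inverse $\Psi$. Hence the unique preimage of $1/\lambda$ under $\Stiel$, if it exists in $(-\infty,0)$, must equal $\Psi(1/\lambda) = -\lambda(1-\EffDim(\lambda)/m)$, and this lies in $(-\infty,0)$ exactly when $m > \EffDim(\lambda)$.

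I do not anticipate a substantive obstacle: the proof is essentially a one-line algebraic computation made possible by the closed-form inverse $\Psi$. The only point requiring a touch of care is verifying that $\Psi$ is a valid global inverse of $\Stiel$ on the negative real axis, which follows from the monotonicity and continuity properties of $\Stiel$ already stated in the paper.
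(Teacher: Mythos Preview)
Your approach is essentially the paper's: both invert via $\Psi$ and compute $\Psi(1/\lambda)=-\lambda(1-\EffDim(\lambda)/m)$. The ``only if'' direction and the formula are fine.

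There is, however, a small gap in your existence (``if'') direction. You argue that the preimage, \emph{if it exists}, must equal $\Psi(1/\lambda)$, and then note this candidate is negative precisely when $m>\EffDim(\lambda)$. But negativity of the candidate does not by itself establish that it \emph{is} a preimage: the identity $\Stiel(\Psi(s))=s$ only holds for $s$ in the range of $\Stiel|_{(-\infty,0)}$, namely $(0,\Stiel(0))$, and you have not shown $1/\lambda<\Stiel(0)$. Monotonicity and continuity of $\Stiel$ tell you the range is an interval $(0,\Stiel(0))$, but not where its right endpoint sits. The paper closes this gap by taking $z\to 0$ in \eqref{eq:MP-Eq} to obtain $\EffDim(1/\Stiel(0))=m$, and then uses that $\mu\mapsto\EffDim(\mu)$ is decreasing to conclude $1/\lambda<\Stiel(0)$ iff $m>\EffDim(\lambda)$. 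An equally quick fix, closer in spirit to your computation, is to plug $s=1/\lambda$ directly into the Marchenko--Pastur equation \eqref{eq:MP-Eq} at $z_0:=\Psi(1/\lambda)$ and verify it is satisfied; uniqueness of the positive root then gives $\Stiel(z_0)=1/\lambda$.
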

\begin{proof}
	By merit of being the Stieltjes transform of a finite measure supported on $\RR_{\ge 0}$, $\Stiel(\cdot)$ maps bijectively $(-\infty,0)$ to $(0,\Stiel(0))$, hence a solution $\tilde{\lambda}>0$ exists if and only if $\Stiel(0)>1/\lambda$. Taking the limit $z\to 0+$ in \eqref{eq:MP-Eq}, we find (after routine algebraic manipulation) that $\Stiel(0)$ solves $\EffDim(1/\Stiel(0))=m$. Since the function $\mu\mapsto \EffDim(\mu)$ is decreasing for positive $\mu$, we deduce that $1/\Stiel(0)<\lambda$ if and only if $m>\EffDim(\lambda)$. When this is the case, $\tilde{\lambda}=\Psi(1/\lambda)$ where $\Psi$ is the explicit inverse \eqref{eq:Psi-Def}.
\end{proof}

\begin{remark}
	Earlier works on distributed Newton's method \citep{derezinski2020debiasing,zhang2023optimal}, considered debiasing for a closely related but different Hessian sketch. Specifically, they assume the Hessian has the form $H=X^\T X$ where $X\in \RR^{N \times d}$; such Hessians naturally appear in machine learning problems,
	for example in the (reguarlized) empirical loss minimization for a generalized linear model (GLM). 
	\citep{zhang2023optimal} considered (among others), the sketch $X^\T S^\T S X$, and calculated the appropriate shrinkage factor for asymptotically debiasing the inverse: $(\gamma X^\T S^\T S X + \lambda I)^{-1} \simeq (H+\lambda I)^{-1}$ where $\gamma:= 1/\left( 1-\frac1m\EffDim(\lambda) \right)$, provided that $m>\EffDim(\lambda)$, under an asymptotic setting as in Theorem~\ref{thm:Determinstic-Cov} (though without making the connection to the Marchenko-Pastur law explicit). In the scheme they considered, the effective dimension was assumed to be known. We remark that Theorem~\ref{thm:Determinstic-Cov} holds verbatim if $H^{1/2}$ is replaced by $X^\T$, and the ratio $\tau<d/N<1/\tau$ is bounded \citep{knowles2017anisotropic}. Rewritting \eqref{eq:local-law-1} as $(H+(\Stiel(z))^{-1}I)^{-1}\simeq \left( (-z\Stiel(z))^{-1}X^\T S^\T S X + (\Stiel(z))^{-1}I \right)^{-1}$, setting $z=-\tilde{\lambda}$ recovers their proposed shrinkage factor. Accordingly, the adaptive procedure we propose below applies to that sketching scheme as well, with the obvious modifications.
\end{remark}

Note that we do not have access to the function $\Stiel(\cdot)$ directly (to calculate it numerically via \eqref{eq:MP-Eq}, one needs to invert the full Hessian)---but we \emph{can} estimate it from the sketched measurements $SHS^\T$, using Theorem~\ref{thm:MP-Law}. Accordingly, we can estimate $\tilde{\lambda}$ by the solution $\hat{\lambda}$ of the (random) equation
\begin{equation}\label{eq:Root-Solving}
	\StielEmp(-\hat{\lambda}) = 1/\lambda ,\quad \hat{\lambda}>0,
\end{equation}
if it exists. To have $\hat{\lambda}\approx \tilde{\lambda}$, we need that $m>\EffDim(\lambda)$, and we do not know this a priori - therefore we also need a way to select large enough $m$ prior to solving \eqref{eq:Root-Solving}. Moreover, to ensure small error in Theorems~\ref{thm:MP-Law}-\ref{thm:Deterministic-Sketch}, one needs to work with $z$ bounded away from $0$. As $\tilde{\lambda}=\lambda(1-\frac1m\EffDim(\lambda))$, to ensure that this is $\Omega(\lambda)$ we need to have $m\ge (1+\Omega(1))\EffDim(\lambda)$. Specifically, we shall aim to choose $m$ between $1.5\EffDim(\lambda)\le m\le 4\EffDim(\lambda)$, the constants chosen somewhat arbitrarily.

\paragraph{Testing for a good $m$.}  
We devise a test that, given $m$, with high probability: (a) rejects if $m<1.5\EffDim(\lambda)$; and (b) accepts if $m\ge 2\EffDim(\lambda)$.
Set $z_0=-5\lambda/12$. The tests accepts if $\StielEmp(z_0)>1/\lambda$, and rejects otherwise. \\
If $m<1.5\EffDim(\lambda)$ then either there is no solution $\tilde{\lambda}>0$ to $\Stiel(-\tilde{\lambda})=1/\lambda$, or there is one, specifically $\tilde{\lambda}=\lambda(1-\frac1m\EffDim(\lambda))\le \lambda/3 < -z_0$. Since $z\mapsto \Stiel(z)$ is positive and increasing for $z<0$, necessarily $\Stiel(z_0)<\Stiel(-\tilde{\lambda})=1/\lambda$. Provided that $m$ is large enough, w.h.p. $\StielEmp(z_0)<1/\lambda$ as well, hence the test rejects. \\
If $m\ge 2\EffDim(\lambda)$ then $\tilde{\lambda}=\lambda(1-\frac1m\EffDim(\lambda))\ge \lambda/2>-z_0$, hence $\Stiel(z_0)>\Stiel(-\tilde{\lambda})=1/\lambda$. Provided that $m$ is large enough, w.h.p. $\StielEmp(z_0)>1/\lambda$ as well, hence the test accepts.

\paragraph{Searching for $m$ by doubling.} Start with $m=m_0$ large enough; proceeds iteratively, doubling $m$ after each iteration $m := 2m$. At every iteration, apply the above test, halting if it accepts and continuing if it rejects. W.h.p., (a) we do not halt on any $m<1.5\EffDim(\lambda)$; (b) we halt on the first $m$ satisfying $m\ge 2\EffDim(\lambda)$, and therefore return $m$ satisfying $m\le 4\EffDim(\lambda)$.

\paragraph{Computational complexity.} W.h.p., the doubling procedure stops within $\log_2(4\EffDim(\lambda)/m_0))$ iterations. Each iteration $t$ costs at most $O(d^2m_t)$, $m_t=2^{t}m_0$, so the total cost is $O(d^2\min\{\EffDim(\lambda),m_0\})$ and dominated by the cost of  multiplying $SHS^\T$ for $S\in \RR^{m_t\times d}$ at the last iteration.

Having found large enough $m$, we can solve the equation \eqref{eq:Root-Solving} by binary search, and form the sketch \eqref{eq:feature-sketch-Def} with $\hat{\lambda}$ in place of $\tilde{\lambda}$.
For the reader's convenience, the overall adaptive sketching procedure---which 
is performed (in parallel) by every worker---is summarized 
in Algorithms~\ref{Alg:1}-\ref{Alg:2}.
\vspace{-0.1cm}
\begin{algorithm}[H]
	\caption{}\label{Alg:1}
	\begin{algorithmic}[1]
			
			\Procedure{\ProcChooseMIID}{$H$, $\lambda$, $m_0$}       
			\State $m=m_0$ 
			\While{$m<d$}
			\State{Sample i.i.d. sketch $S \in \RR^{m\times d}$ and form $S H S^\T$.}
			\State{Compute eigenvalues $\lambda_1(S HS^\T) \ge \ldots \ge \lambda_m(SHS^\T)$.}
			\If{$\StielEmp(-5\lambda/12) > 1/\lambda$} \Comment{$\StielEmp(z)$ defined in \eqref{eq:StielEmp-Def}.}
			\State{\Return{m}}	\Comment{Guarantee: w.h.p., $1.5\EffDim(\lambda) \le m \le 4\EffDim(\lambda)$.}
			\Else
			\State{m := 2m}	
			\EndIf
			\EndWhile
			\State{\Return m}
			\EndProcedure
		\end{algorithmic}
\end{algorithm}
\vspace{-0.5cm}
\begin{algorithm}[H]
	\caption{}\label{Alg:2}
	\begin{algorithmic}[1]
			
			\Procedure{\ProcChooseLambdaIID}{$H$, $\lambda$, $m$}
			\State{Sample sketch $S\in \RR^{m\times d}$ and form $S H S^\T$.}
			\State{Compute eigenvalues $\lambda_1(S HS^\T) \ge \ldots \lambda_m(SHS^\T)$.}
			\If{$\StielEmp_m(0)\le 1/\lambda$}
			\State{ERROR}; set $\hl=5\lambda/12$.
			\Else
			\State{
					Find $5\lambda/12\le \hat{\lambda} \le \lambda$ such that $\StielEmp(-\hat{\lambda})=1/\lambda$.
				}  \Comment{Can be done by binary search.}
			\EndIf
			\State{\Return $(\hat{\lambda}, \hat{W})$ where
		$					\hat{W}=S^\T (S H S^\T + \hat{\lambda}I)^{-1}S 
       $
				}
				\Comment{(In practice, return $\hat{W}g$ where $g\in\RR^d$ is the current gradient.)}
			\EndProcedure
			
		\end{algorithmic}
	
\end{algorithm}
\vspace{-0.5cm}
It is straightforward to establish the asymptotic validity of our adaptive sketching procedure.
\begin{theorem}\label{thm:Asymptotic}
	Set $\eta,D>0$, let $m_0$ be an initial sketch size. Assume \AssumAsympZero~(in particular $m_0=\Omega(d)$), and furthermore that $\lambda_{\rank(H)}(H)\ge \tau$. 
	Let $\hat{m}$ be the output of Algorithm~\ref{Alg:1}, and $(\hat{\lambda},\hat{W})$ be the output of Algorithm~\ref{Alg:2} (the latter given input $m=\hat{m}$).
	Then w.p. $1-O(m_0^{-D})$:
	\begin{itemize}[noitemsep,topsep=-5pt]
		\item $\max\{m_0,1.5\EffDim(\lambda)\} \le \hat{m} \le \max\{m_0,4\EffDim(\lambda)\}$.
		\item $|\hat{\lambda}-\tilde{\lambda}| = O(m_0^{-1/2+\eta})$.
		\item $\|\Expt[\hat{W}]-W\| = O(m_0^{-1/2+\delta})$.
	\end{itemize}
	The constants in the $O(\cdot)$ notation may depend on $\eta,D>0$ and also $\lambda>0$.
\end{theorem}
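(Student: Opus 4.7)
The plan is to first establish a master good event, holding with probability $1-O(m_0^{-D})$, on which all the random approximations used below are valid simultaneously. Since Algorithm~\ref{Alg:1} uses at most $\lceil \log_2(d/m_0) \rceil = O(\log d)$ fresh sketches, and each one falls under Theorem~\ref{thm:MP-Law} (with aspect ratio $\ge \tau$), a union bound gives that for every such sketch $S$ and every $z\in[-\lambda,-5\lambda/12]$ (and for the final sketch used in Algorithm~\ref{Alg:2}), $|\StielEmp(z)-\Stiel(z)|=O(m_0^{-1/2+\eta})$; the same union bound ensures the entrywise and operator norm conclusions of Theorem~\ref{thm:Deterministic-Sketch} hold on the final sketch. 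Condition on this event throughout.

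For the sketch size claim, I would combine the heuristic acceptance/rejection argument given in the paragraph \emph{Testing for a good $m$} with a quantitative margin. The key fact is that $z\mapsto \Stiel(z)$ is continuously differentiable and strictly increasing on $(-\infty,0)$, so there exists a constant $c=c(\tau,\lambda)>0$ such that whenever $m\ge 2\EffDim(\lambda)$ one has $\Stiel(-5\lambda/12)-1/\lambda \ge c$, while whenever $m\le 1.5\EffDim(\lambda)$ one has $1/\lambda-\Stiel(-5\lambda/12)\ge c$ (here one uses that $\tilde\lambda=\lambda(1-\EffDim(\lambda)/m)$ is separated from $5\lambda/12$ by at least $\lambda/12$). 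Since the random error is $O(m_0^{-1/2+\eta})\ll c$ on the good event, every test makes the correct decision: no halting below $1.5\EffDim(\lambda)$, and definite halting by $2\EffDim(\lambda)$. Because we stop the first time the test accepts, the output $\hat m$ is either $m_0$ itself or at most the doubling of the previous (still rejected) value, giving $\hat m\le \max\{m_0,4\EffDim(\lambda)\}$.

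For the second claim, once we know $\hat m\ge 1.5\EffDim(\lambda)$ the oracle $\tilde\lambda$ exists and (by construction of the test) satisfies $\tilde\lambda\in[5\lambda/12-o(1),\lambda]$. Both roots $\hat\lambda,\tilde\lambda$ lie in a compact subinterval of $(0,\infty)$ bounded away from $0$ and from $\lambda$ by constants depending only on $\tau,\lambda$. On this interval $\Stiel'(-\cdot)$ is bounded below by a strictly positive constant (again by strict monotonicity and analyticity of the Stieltjes transform of a compactly supported measure with a gap at zero). Writing $\Stiel(-\tilde\lambda)=1/\lambda=\StielEmp(-\hat\lambda)$ and subtracting $\Stiel(-\hat\lambda)$, one gets $|\Stiel(-\tilde\lambda)-\Stiel(-\hat\lambda)|=|\StielEmp(-\hat\lambda)-\Stiel(-\hat\lambda)|=O(m_0^{-1/2+\eta})$, and the mean value theorem applied to $\Stiel$ yields $|\hat\lambda-\tilde\lambda|=O(m_0^{-1/2+\eta})$.

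For the bias bound, the main issue is that $\hat\lambda$ is random and correlated with $S$, so one cannot plug a deterministic regularizer into Theorem~\ref{thm:Deterministic-Sketch} directly. My approach is to compare $\hat W$ with the deterministic-regularizer sketch $\hat W^\star := S^\T(SHS^\T+\tilde\lambda I)^{-1}S$. Using the resolvent identity and $\|\hat\lambda-\tilde\lambda\|=O(m_0^{-1/2+\eta})$, plus the deterministic operator-norm bound $\|(SHS^\T+\mu I)^{-1}\|\le 1/(5\lambda/12)$ valid for both $\mu=\hat\lambda,\tilde\lambda$, we get $\|\hat W-\hat W^\star\|\le \|S\|^2\cdot |\hat\lambda-\tilde\lambda|/(5\lambda/12)^2$, which is $O(m_0^{-1/2+\eta})$ on the good event (since $\|S\|=O(1)$ w.h.p.\ under \AssumAsymp). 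Theorem~\ref{thm:Deterministic-Sketch} applied at the deterministic $z=-\tilde\lambda$ (using the operator-norm form \eqref{eq:thm:Deterministic-Sketch-2}) gives $\|\Expt[\hat W^\star]-W\|=O(m_0^{-1/2+\eta})$, since $(\Stiel(-\tilde\lambda))^{-1}=\lambda$ by construction. The remaining step is to absorb the bad event when taking expectations: on the bad event $\|\hat W\|\le \|S\|^2/(5\lambda/12)$, and since the event has probability $O(m_0^{-D})$ while $\|S\|$ has all moments bounded under \AssumAsymp, Cauchy--Schwarz gives $\|\Expt[\hat W\,\Indic{\text{bad}}]\|=O(m_0^{-D/2})$, which is negligible for $D$ chosen large enough relative to $\eta$. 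Combining the three pieces via the triangle inequality yields the claimed operator-norm bound.

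The main obstacle I anticipate is the bias step, specifically handling the coupling between $\hat\lambda$ and $S$; the resolvent-perturbation trick above is the cleanest way I see to decouple them without needing a uniform-in-$z$ deterministic equivalent in operator norm (which the cited theorem does not give, only an entrywise/expectation version).
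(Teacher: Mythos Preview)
Your proposal is correct and follows essentially the same approach as the paper, which simply states that the result is ``a straightforward consequence of Theorems~\ref{thm:MP-Law}--\ref{thm:Deterministic-Sketch}'' and omits the details; you have supplied precisely those details (union bound over the $O(\log d)$ sketches, the monotonicity/derivative argument for $\hat\lambda$, and the resolvent-perturbation decoupling for the bias). One small point worth tightening: $\tilde\lambda$ depends on $\hat m$ and is therefore random, so when you invoke Theorem~\ref{thm:Deterministic-Sketch} at $z=-\tilde\lambda$ you should first condition on $\hat m$ (the Algorithm~\ref{Alg:2} sketch is fresh and independent of Algorithm~\ref{Alg:1}), then take an outer expectation over $\hat m$ restricted to the good event.
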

\begin{proof}
	This is a straighforward consequence of Theorems~\ref{thm:MP-Law}-\ref{thm:Deterministic-Sketch}. We omit the details.	
\end{proof}

\subsection{Non-Asymptotic Guarantees for Gaussian Sketches}
\label{sec:Nonasymptotic}

The guarantees provided by Theorem~\ref{thm:Asymptotic} are asymptotic, in that they do not depend explicitly on crucial problem parameters (such as $\lambda,\|H\|$); in particular, we implicitly assume that $H$ is well-conditioned. When $S\in \RR^{m\times d}$ is an i.i.d. Gaussian matrix, $S_{ij}\sim \m{N}(0,1/m)$, we are able to provide \textbf{non-asymptotic} error guarantees for our adaptive sketching algorithm. Our bounds are dimension-free, and remarkably do not depend explicitly on the condition number of $H$. The theorems below are the main technical part of this paper; their proofs are deferred to the appendix.

Theorems \ref{thm:Alg1-Gauss} and \ref{thm:Alg2-Gauss} respectively provide non-asymptotic guarantees for Algorithms \ref{Alg:1} and \ref{Alg:2}:
\begin{theorem}\label{thm:Alg1-Gauss}
	Assume that $S$ has i.i.d. Gaussian entries.
	Let $\delta\in(0,1)$. Suppose that $m_0\ge  C(1+\log(1/\delta))$ for large enough constant $C$. 
	W.p. $1-\delta$, Algorithm~\ref{Alg:1} outputs $\hat{m}$ such that 
	\begin{equation}\label{eq:thm:Alg1-Gauss}
		\max\{1.5\EffDim(\lambda),m_0\} \le \hat{m} \le \max\{4\EffDim(\lambda),m_0\}\,.
	\end{equation}
\end{theorem}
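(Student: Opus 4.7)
The plan is to analyze each iteration of the while-loop separately and then union bound. Write $m_t := 2^t m_0$ for the sketch dimension at iteration $t$, and let $T := \lceil \log_2(d/m_0) \rceil + 1$ upper-bound the number of iterations. I would show that with probability at least $1-\delta$, at every iteration the test decides correctly: if $m_t < 1.5\EffDim(\lambda)$ it rejects ($\StielEmp(-5\lambda/12) < 1/\lambda$), and if $m_t \ge 2\EffDim(\lambda)$ it accepts. The bound \eqref{eq:thm:Alg1-Gauss} then follows mechanically from the doubling search: the loop cannot halt on any $m_t < 1.5\EffDim(\lambda)$, and it must halt no later than the first $m_t \ge 2\EffDim(\lambda)$, which by doubling is at most $4\EffDim(\lambda)$ (unless $m_0 \ge 2\EffDim(\lambda)$ already, in which case it halts at $m_0$). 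The argument reduces to two pieces: (a) a deterministic separation $|\Stiel(z_0) - 1/\lambda| \gtrsim 1/\lambda$ for $z_0 := -5\lambda/12$ in both regimes, and (b) a non-asymptotic, dimension-free concentration of $\StielEmp(z_0)$ around $\Stiel(z_0)$.

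For (a), I would exploit the explicit inverse $\Psi$ from \eqref{eq:Psi-Def}: a direct computation gives $\Psi(1/\lambda) = -\lambda(1-\EffDim(\lambda)/m)$ and $\Psi'(s) = s^{-2} - \frac{1}{m}\tr(H^2(I+sH)^{-2})$. In the accepting regime $m \ge 2\EffDim(\lambda)$, $\tilde{\lambda} \ge \lambda/2$; for $\eta \ge 1/\lambda$ the bound $\Psi'(\eta) \le \eta^{-2} \le \lambda^2$ holds, and the mean-value theorem applied to $\Psi(\Stiel(z_0))-\Psi(1/\lambda) = \tilde{\lambda} - 5\lambda/12 \ge \lambda/12$ yields the dimension-free gap $\Stiel(z_0) - 1/\lambda \ge 1/(12\lambda)$. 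In the rejecting regime, a symmetric mean-value argument on $\Stiel$ directly, combined with the Cauchy--Schwarz lower bound $\Stiel'(\zeta) \ge \Stiel(\zeta)^2$ (valid because the companion MP law is a probability measure) and a uniform lower bound $\Stiel(z_0) \gtrsim 1/\lambda$ extracted from the MP equation \eqref{eq:MP-Eq}, produces a dimension-free gap $1/\lambda - \Stiel(z_0) \gtrsim 1/\lambda$. The corner case where no positive root $\tilde{\lambda}$ exists is handled by an even simpler monotonicity argument via $\Stiel(0^-) \le 1/\lambda$.

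The main obstacle is (b): establishing $|\StielEmp(z_0) - \Stiel(z_0)| \le C_1/\sqrt{m} + C_2\sqrt{\log(1/\delta')/m}$ with constants depending only on $\lambda$, not on $d$, $\|H\|$, or $\mathrm{cond}(H)$. I would split this into a Gaussian--Lipschitz concentration about the mean plus a bias estimate $|\Expt[\StielEmp(z_0)]-\Stiel(z_0)|$. Viewing $f(S) := \frac{1}{m}\tr(SHS^\T - z_0 I)^{-1}$ as a function of the Gaussian matrix $S$ with i.i.d.\ $\m{N}(0,1/m)$ entries, the gradient $\nabla_S f = -\frac{2}{m}(SHS^\T - z_0 I)^{-2}SH$ naively introduces a $\tr(H^2)$ factor that scales with $d$. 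The critical technique is to use the commutation relation $(SHS^\T - z_0 I)^{-1}S = S(HS^\T S - z_0 I)^{-1}$ to transfer all factors of $H$ past the resolvent; combined with the deterministic operator-norm bound $\|(SHS^\T - z_0 I)^{-1}\|_{\mathrm{op}} \le 1/|z_0|$ ensured by the negative shift, and possibly a spectral truncation of $H$ into its top-$\EffDim(\lambda)$ eigenspace and its complement (so only $\EffDim(\lambda)$ rather than $d$ appears in spectral sums), this should yield a Lipschitz constant of order $1/(|z_0|^{3/2}\sqrt{m})$. For the bias, I would carry out an explicit Gaussian leave-one-out / Schur-complement calculation (or invoke a tracked-constants version of Theorem~\ref{thm:MP-Law} specialized to Gaussians), delivering $O(1/m)$ bias with constants depending only on $|z_0|$.

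Finally, the union bound over iterations does not incur a $\log T$ factor: since the per-iteration Gaussian--Lipschitz tail is of the form $2\exp(-c m_t)$ for a suitable $c$ depending on $\lambda$, and $m_t = 2^t m_0$ doubles, summing over $t$ gives a total failure probability $\le C'\exp(-c m_0)$. Thus $m_0 \ge C(1+\log(1/\delta))$ suffices for an absolute constant $C$. The truly hard step---and the key departure from the asymptotic analysis of Theorem~\ref{thm:Asymptotic}---is the dimension-free concentration in (b): the commutation relation and spectral truncation are the essential ideas, but turning them into bounds with clean, spectrum-free constants requires careful bookkeeping with Gaussian resolvent identities.
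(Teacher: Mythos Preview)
Your overall skeleton---per-iteration analysis plus a doubling union bound $\sum_t e^{-c 2^t m_0} \lesssim e^{-cm_0}$---matches the paper's. However, your decomposition into (a) a deterministic gap $|\Stiel(z_0)-1/\lambda|\gtrsim 1/\lambda$ and (b) concentration $|\StielEmp(z_0)-\Stiel(z_0)|\lesssim 1/\sqrt{m}$ is \emph{not} what the paper does, and part (b) has a genuine gap.

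On concentration about the mean: you are working too hard. The paper's Lemma~\ref{lem:StielConc} obtains $\Pr(|\StielEmp(z_0)-\Expt\StielEmp(z_0)|\ge t)\le 2e^{-cmz_0^2 t^2}$ via Azuma--Hoeffding, using only that changing one row of $S$ is a rank-one perturbation of the sample covariance, whence the resolvent trace changes by at most $1/(m|z_0|)$. This is already dimension-free and independent of $\|H\|$; no Gaussian--Lipschitz computation, commutation relation, or spectral truncation is needed.

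The real issue is the bias $|\Expt\StielEmp(z_0)-\Stiel(z_0)|$. The leave-one-out / Schur-complement calculation you propose is exactly the paper's Theorem~\ref{thm:Stiel-NonAsymp-Gaussian}: it does \emph{not} yield a bias bound directly, but rather shows that $\Expt\StielEmp(z_0)$ satisfies the Marchenko--Pastur equation up to an error $\lesssim \frac{\beta}{\alpha}\frac{1}{|z_0|\sqrt m}$, where $\alpha=|z_0|\Expt\StielEmp(z_0)$ and $\beta=\frac{1}{m}\EffDim(1/\Expt\StielEmp(z_0))$. Translating this into $|\Expt\StielEmp(z_0)-\Stiel(z_0)|=O(1/\sqrt m)$ requires a lower bound on $\alpha$, hence on $\Expt\StielEmp(z_0)$. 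But when $m\ll \EffDim(\lambda)$ one has $\Stiel(z_0)\ll 1/\lambda$ (from the MP equation, $\EffDim(1/\Stiel(z_0))<m\ll\EffDim(\lambda)$ forces $1/\Stiel(z_0)\gg\lambda$), and you have no a priori control on $\Expt\StielEmp(z_0)$ either---so $\alpha$ may be tiny and the error estimate blows up. Your claimed ``$O(1/m)$ bias with constants depending only on $|z_0|$'' is precisely what is not delivered by the leave-one-out.

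The paper resolves this by splitting the rejecting regime in two. For moderate $m$, namely $c_0\EffDim(\lambda)\le m<1.5\EffDim(\lambda)$, the bound $m\ge c_0\EffDim(\lambda)$ is used to control $\beta$, and a self-consistent argument with Theorem~\ref{thm:Stiel-NonAsymp-Gaussian} shows $\Expt\StielEmp(z_0)\le \frac{14}{15\lambda}$ (Lemma~\ref{lem:Moderate-n}). For small $m<c_0\EffDim(\lambda)$, the paper abandons the Stieltjes-transform route entirely and instead proves a smallest-eigenvalue bound (Theorem~\ref{thm:Smallest-Eigenvalue-Gauss}): with probability $1-Ce^{-cm}$, $\lambda_{\min}(SHS^\T)\gtrsim \lambda\cdot\frac{\EffDim(\lambda)}{m}$, which for $m<c_0\EffDim(\lambda)$ is $\gtrsim \lambda$ and directly gives $\StielEmp(z_0)\le (\lambda_{\min}+|z_0|)^{-1}<1/\lambda$. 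This smallest-eigenvalue argument is the missing ingredient in your proposal; without it (or an equivalent replacement) the small-$m$ iterations are not covered.
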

\begin{theorem}\label{thm:Alg2-Gauss}
	Assume that $S$ has i.i.d. Gaussian entries.
	Let $\delta\in(0,1)$ and $\eps>0$. Suppose 
	Algorithm~\ref{Alg:2} is run with $m\ge 1.5\EffDim(\lambda)$ and that moreover
	$m\ge C\frac{1 + \log(1/\delta)}{\eps^2}$ for large enough constant $C$.
	W.p. $1-\delta$, the output $\hat{\lambda}$ satisfies $|\hat{\lambda}-\tilde{\lambda}| \le \eps \tilde{\lambda}$, where $\tilde{\lambda}=\lambda\left(1-\frac1m\EffDim(\lambda)\right)$.
\end{theorem}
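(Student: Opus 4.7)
The plan is to reduce the theorem to a scalar concentration inequality for $\StielEmp(-\tilde\lambda)$ comparing it against the target value $\Stiel(-\tilde\lambda) = 1/\lambda$, and then transfer the result to $\hat\lambda$ via monotonicity. The starting observation is that since $\StielEmp(-\hat\lambda) = 1/\lambda = \Stiel(-\tilde\lambda)$ by construction, the mean value theorem applied to the smooth, strictly increasing function $\lambda' \mapsto \StielEmp(-\lambda')$ gives
$|\hat\lambda - \tilde\lambda| = |\StielEmp(-\tilde\lambda) - 1/\lambda| / \StielEmp'(-\zeta)$
for some $\zeta$ between $\hat\lambda$ and $\tilde\lambda$. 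By Cauchy-Schwarz, $\StielEmp'(-\zeta) = \frac{1}{m}\sum_k (\mu_k+\zeta)^{-2} \ge \StielEmp(-\zeta)^2$, and on the high-probability event that $\StielEmp(-\zeta) \approx 1/\lambda$ (which will follow from the main concentration step), this is $\gtrsim 1/\lambda^2$. Combined with $\tilde\lambda \ge \lambda/3$, which is guaranteed by the hypothesis $m \ge 1.5\,\EffDim(\lambda)$, it suffices to prove $|\StielEmp(-\tilde\lambda) - 1/\lambda| \le O(\eps/\lambda)$ with probability at least $1-\delta$ in order to conclude $|\hat\lambda - \tilde\lambda| \le O(\eps\tilde\lambda)$.

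For the concentration around the mean, I would use McDiarmid's bounded differences inequality on the independent Gaussian rows of $S$. The key identity is
$\tilde\lambda \StielEmp(-\tilde\lambda) = 1 - \frac{1}{m}\sum_{k=1}^m \phi\bigl(\mu_k(SHS^\T)\bigr), \qquad \phi(\mu) := \mu/(\mu+\tilde\lambda) \in [0,1],$
which recasts $\StielEmp(-\tilde\lambda)$, up to a scaling factor, as an average of eigenvalue functionals bounded in $[0,1]$. Replacing a single row of $S$ perturbs $SHS^\T$ by a matrix of rank at most three, so by Weyl's interlacing inequality the above average changes by at most $6/m$. McDiarmid's inequality then yields sub-Gaussian concentration of $\tilde\lambda \StielEmp(-\tilde\lambda)$ around its mean with variance proxy $O(1/m)$, completely independent of both $d$ and $\|H\|$. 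Dividing through by $\tilde\lambda \gtrsim \lambda$ gives $|\StielEmp(-\tilde\lambda) - \Expt[\StielEmp(-\tilde\lambda)]| \le O(\sqrt{\log(1/\delta)/m})/\lambda$, which is at most $O(\eps/\lambda)$ once $m \gtrsim \log(1/\delta)/\eps^2$.

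It remains to bound the bias $|\Expt[\StielEmp(-\tilde\lambda)] - 1/\lambda|$, which must also be $O(\eps/\lambda)$. The proof would use the Schur-complement / leave-one-out identity
$(SHS^\T + \tilde\lambda I)^{-1}_{kk} = 1 / \bigl(\tilde\lambda(1 + y_k^\T R_{-k} y_k)\bigr), \quad y_k \sim \m{N}(0, H/m),\; R_{-k} := (Y_{-k} Y_{-k}^\T + \tilde\lambda I_d)^{-1},$
where $y_k$ is independent of $R_{-k}$ (with $Y = H^{1/2}S^\T$), combined with Hanson-Wright concentration for the Gaussian quadratic form $y_k^\T R_{-k} y_k$ around its conditional mean $\tr(R_{-k}H)/m$, and the Marchenko-Pastur fixed-point equation $\tilde\lambda = \lambda(1 - \EffDim(\lambda)/m)$. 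The fixed-point structure pins down $\tr(R_{-k}H)/m$ to the deterministic value $\EffDim(\lambda)/(m-\EffDim(\lambda))$, and averaging over $k$ yields the required bias bound.

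The main technical obstacle is obtaining this bias bound at a dimension-free and $\|H\|$-free rate: Theorems~\ref{thm:MP-Law}--\ref{thm:Deterministic-Sketch} give rate $O(m^{-1/2+\eta})$ but under \AssumAsymp, which explicitly bounds $\|H\|$ and other spectral quantities. A dedicated argument for Gaussian sketches -- e.g.\ an iterative self-consistency analysis of the Marchenko-Pastur equation with Hanson-Wright as the concentration engine for each leave-one-out quadratic form -- is therefore required to push through the claimed dimension-free, condition-number-free bounds. By contrast, the McDiarmid concentration step (made possible by the rank-3 interlacing trick) and the MVT transfer are comparatively clean once the reduction is set up; a minor subtlety in the latter is that $\zeta$ is random, so the derivative lower bound must be verified on the entire random interval, which can be done by applying the Step 1 bound at the additional point $-\hat\lambda$.
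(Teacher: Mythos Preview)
Your proposal is correct in outline and uses the same two core ingredients as the paper --- a bounded-differences concentration inequality for $\StielEmp(z)$ around its mean (the paper packages this as Lemma~\ref{lem:StielConc}, proved via Sherman--Morrison and Azuma--Hoeffding, which is equivalent to your McDiarmid/interlacing argument; incidentally the rank of the perturbation is at most $2$, not $3$), and a non-asymptotic bias bound showing $\Expt\StielEmp(-\tilde\lambda)\approx 1/\lambda$. Where you differ is in the \emph{transfer} from the Stieltjes-transform level back to $\hat\lambda$. The paper introduces an auxiliary point $z^\star$ defined by $\Expt\StielEmp(z^\star)=1/\lambda$, establishes an inverse-Lipschitz bound for $\Expt\StielEmp$ near $z^\star$ by invoking the Marchenko--Pastur inverse function $\Psi$ (Lemma~\ref{lem:Stiel-Inv-Monotonicity} and Lemma~\ref{lem:InverseLipschitz}), applies concentration at the two bracketing points $z^\star\pm\lambda\eps$, and finally bounds $|z^\star+\tilde\lambda|$ via the self-consistent equation. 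Your route is more direct: you apply the mean value theorem to the \emph{random} function $\StielEmp$ and lower-bound its derivative by the elementary inequality $\StielEmp'\ge\StielEmp^2$ (Cauchy--Schwarz on the spectral average), together with the observation that $\StielEmp(-\zeta)\ge\min\{1/\lambda,\StielEmp(-\tilde\lambda)\}$ by monotonicity. This is cleaner and avoids the detour through $\Psi$, at the cost of needing concentration only at the single point $-\tilde\lambda$ rather than at two bracketing points.

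Regarding the bias bound, you correctly identify it as the technical crux. The paper proves a dedicated non-asymptotic result (Theorem~\ref{thm:Stiel-NonAsymp-Gaussian}) showing that $\Expt\StielEmp(z)$ satisfies the Marchenko--Pastur equation up to an error $\lesssim\frac{\beta}{\alpha}\frac{1}{|z|\sqrt m}$ with $\alpha=|z|\Expt\StielEmp(z)$ and $\beta=\frac1m\EffDim(1/\Expt\StielEmp(z))$; controlling these self-referential quantities under $m\ge 1.5\EffDim(\lambda)$ (via Lemma~\ref{lem:Stil-Expt-Convexity} and Lemma~\ref{lem:DimEff-Contract}) is what gives the dimension-free, condition-number-free rate. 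Two remarks on your sketch: (i) the paper's leave-one-out is taken over the \emph{eigenvectors of $H$} (columns of $H^{1/2}$), not over the rows of $S$ as you propose --- both are standard and either should work, but the column version makes the appearance of $\EffDim$ more transparent; (ii) your statement that the fixed-point structure ``pins down $\tr(R_{-k}H)/m$ to the deterministic value $\EffDim(\lambda)/(m-\EffDim(\lambda))$'' is a bit fast: that trace is itself random and close to $\Expt\StielEmp$ of a leave-one-out matrix, so what one actually obtains is an approximate self-consistent equation for $\Expt\StielEmp$, from which the deterministic value is then extracted --- exactly the content of Theorem~\ref{thm:Stiel-NonAsymp-Gaussian}.
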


Our final non-asymptotic result concerns the concentration of the aggregate inverse Hessian estimate used by the server. To wit, fix $m\ge 1.5\EffDim(\lambda)$, and let $\hat{W}^{(1)},\ldots,\hat{W}^{(q)}$ be the outputs of $q$ independent runs of Algorithm~\ref{Alg:2}, representing $q$ workers producing estimates of the Newton step in parallel. Denote their average $\bar{W} = \frac{1}{q}\sum_{k=1}^q \hat{W}^{(k)}$, and the error matrix:
\begin{align}
	\bar{\m{E}} := (H+\lambda I)^{1/2}\bar{W}(H+\lambda I )^{1/2}-I.
\end{align}
Later on, we will obtain error bounds for the quasi Newton method in terms of $\|\bar{\m{E}}\|$.

\begin{theorem}\label{thm:Avg-Gauss}
	Assume that $S$ has i.i.d. Gaussian entries.
	Suppose that $m\ge 1.5\EffDim(\lambda)$, and let $q\ge 1$, $\delta\in (0,1)$ satisfy $1/\delta\le \exp(\BigOh(d))$, $q\le \exp(\BigOh(d))$. Let $\exp(-\BigOh(d)) \le \eps\le \BigOh(1)$ be small enough. If moreover
	\begin{align}
		m \ge C\frac{1}{\eps^2}\,,\quad q \ge C \frac{d\log d}{m}\frac{\log(1/\delta)}{\eps^2} 
		\label{eq_26}
	\end{align}
	for large enough $C>0$, then w.p. $1-\delta$, $\| \bar{\m{E}} \| \le \eps$.	
\end{theorem}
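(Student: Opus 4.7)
The plan is a standard bias--variance decomposition. Writing $A=(H+\lambda I)^{1/2}$, I split
\begin{equation*}
\bar{\m{E}} \;=\; \underbrace{\bigl(A\,\Expt[\hat{W}]\,A - I\bigr)}_{\text{bias}} \;+\; \underbrace{\frac{1}{q}\sum_{k=1}^{q}\bigl(A\hat{W}^{(k)}A - \Expt[A\hat{W}A]\bigr)}_{\text{fluctuation}},
\end{equation*}
and control each piece by $\eps/2$ with probability at least $1-\delta/2$, combining via a union bound. A key enabling fact is that $\hat{W}^{(1)},\ldots,\hat{W}^{(q)}$ are i.i.d.: each worker samples its own $S^{(k)}$, and the data-adaptive $\hat{\lambda}^{(k)}$ is a measurable function of $S^{(k)}$ only.

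For the bias, I start from Theorem~\ref{thm:Alg2-Gauss}, which yields $|\hat{\lambda}-\tilde{\lambda}|\le\eps\tilde{\lambda}$ with high probability, and use Lipschitz continuity of $\alpha\mapsto(SHS^\T+\alpha I)^{-1}$ in $\alpha$ to reduce the analysis to $\Expt[S^\T(SHS^\T+\tilde{\lambda}I)^{-1}S]$. For this I need a non-asymptotic Gaussian version of Theorem~\ref{thm:Deterministic-Sketch}: since $\Stiel(-\tilde{\lambda})=1/\lambda$ by the very definition of $\tilde{\lambda}$, the deterministic equivalent is $(H+\lambda I)^{-1}=A^{-2}$, so the goal is
\begin{equation*}
\bigl\|A\,\Expt[S^\T(SHS^\T+\tilde{\lambda}I)^{-1}S]\,A - I\bigr\| \;\lesssim\; \frac{1}{\sqrt{m}}.
\end{equation*}
A non-asymptotic bound of this form can be established by Gaussian integration by parts (Stein's lemma) combined with resolvent identities, cleanly exploiting the Gaussianity of $S$. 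Under $m\ge C/\eps^2$, this forces the bias to be at most $\eps/2$.

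For the fluctuation, I apply matrix Bernstein to the i.i.d.\ sum $Y=\tfrac{1}{q}\sum_{k=1}^{q}X_k$ with $X_k := A\hat{W}^{(k)}A - \Expt[A\hat{W}A]$. The two quantities I need to control are (i) the matrix variance $v := \bigl\|\Expt[X_k^2]\bigr\|$ and (ii) a (high-probability) uniform operator-norm bound $L$ on $\|X_k\|$, the latter obtained by truncation using Gaussian tail estimates on $\|S\|$ and $\|(SHS^\T+\tilde{\lambda}I)^{-1}\|$. Matrix Bernstein (or its sub-exponential variant to absorb the truncation) yields
\begin{equation*}
\|Y\|\;\lesssim\;\sqrt{\frac{v\,\log(d/\delta)}{q}} \;+\; \frac{L\,\log(d/\delta)}{q}.
\end{equation*}
Matching the stipulated condition $q\ge C\,\tfrac{d\log d}{m}\,\tfrac{\log(1/\delta)}{\eps^2}$ then boils down to proving $v\lesssim d/m$ (and $L$ of compatible polynomial order, up to logarithms). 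The intuition is that each $A\hat{W}^{(k)}A$ is a random rank-$m$ matrix whose mean is close to the rank-$d$ identity, so per-direction fluctuations are of order $\sqrt{d/m}$; quantitatively I would compute $v^\T\Expt[X_k^2]v = \Expt\|A\hat{W}Av\|^2 - \|\Expt[A\hat{W}Av]\|^2$ via Gaussian calculus and a second-order deterministic equivalent for the resolvent of $SHS^\T$.

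The main obstacle is this variance calculation, which requires a more refined RMT estimate than the first-order deterministic equivalent in Theorem~\ref{thm:Deterministic-Sketch}; controlling the bilinear form $\Expt\|A\hat{W}Av\|^2$ uniformly in $v$ is the technical heart of the proof. Secondary obstacles are the coupling between $\hat{\lambda}^{(k)}$ and $S^{(k)}$ (handled by conditioning on the Theorem~\ref{thm:Alg2-Gauss} event and union-bounding over workers, which is cheap since $q\le\exp(O(d))$) and the truncation step needed because $\|A\hat{W}A\|$ is not bounded almost surely. The restrictions $\eps\ge\exp(-O(d))$ and $q,1/\delta\le\exp(O(d))$ presumably appear to enable $\eps$-net arguments on the unit sphere and to absorb subleading $\log$ factors into the leading bound.
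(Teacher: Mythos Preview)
Your high-level architecture matches the paper exactly: bias--variance split, target $\|\Expt[\m{E}]\|=O(1/\sqrt{m})$ for the bias, matrix Bernstein with truncation for the fluctuation, with the key inputs being a variance bound $\|\Expt[\m{E}^2]\|=O(d/m)$ and a high-probability envelope $\|\m{E}\|\le L_{\delta,q}\asymp d/m + \log(q/\delta)/m$. You also correctly flag the variance bound as the technical heart.

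Where you diverge is in the proposed \emph{technique} for the two key lemmas. You suggest Stein's lemma / Gaussian integration by parts and a ``second-order deterministic equivalent''. The paper instead works directly in the eigenbasis $V$ of $H$: since $S$ is Gaussian, the columns $s_\ell=Sv_\ell$ are i.i.d.\ $\m{N}(0,m^{-1}I_m)$, and Sherman--Morrison gives closed-form expressions for $v_\ell^\T\hat{W}v_j$. A sign-flip symmetry $s_j\mapsto -s_j$ then kills all off-diagonal entries of $\Expt[V^\T\m{E}V]$ and of $\Expt[(V^\T\m{E}V)^2]$, reducing both the bias and the variance computation to a single scalar quantity per eigenvector. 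This diagonalization is the main simplification you are missing; it turns what you describe as ``controlling the bilinear form uniformly in $v$'' into a one-dimensional calculation. Your Stein-based route could plausibly be made to work, but the eigenbasis/leave-one-out argument is more direct and avoids needing any abstract second-order local law.

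A smaller point: rather than conditioning on the event of Theorem~\ref{thm:Alg2-Gauss} and union-bounding over workers, the paper handles the data-adaptive $\hat{\lambda}$ by keeping it inside the expectation and using moment bounds $\Expt|\hat{\lambda}-\tilde{\lambda}|^p\lesssim\lambda^p m^{-p/2}$ directly (together with the hard floor $\hat{\lambda}\ge 5\lambda/12$ enforced by the algorithm). This is slightly cleaner than your conditioning approach but amounts to the same thing.
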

\begin{remark}
	Up to the logarithmic factor (which we believe is an artifact), the dimensional dependence of $q$ in \eqref{eq_26} is optimal. To see this, suppose that $H=0,\lambda=1$, so $\hat{W}^{(k)}={S^{(k)}}^\T S^{(k)}$. In particular $\rank(\bar{W})=qm$, therefore $\|\bar{\m{E}}-I\|\ge 1$ when $qm<d$. In fact, $\bar{\m{E}}$ is a Wishart (Gaussian sample covariance) matrix corresponding to $mq$ samples, therefore by standard estimates $\Expt\|\bar{\m{E}}-I\| \asymp \frac{d}{mq}\vee \sqrt{\frac{d}{mq}}$, hence the dependence on $\eps$ is correct as well.
\end{remark}
	
	\section{Convergence Results for the Approximate Newton Method}
	
	\label{sec:Quasi-Newton}

		


In this section we present guarantees for the Newton method with approximate Hessian estimates. These bounds are relevant beyond the parallel scheme considered in this paper, and accordingly are stated in general terms. Let $G:\RR^d\to \bar{\RR}$ be convex and twice-differentiable.\footnote{We denote $\bar{\RR}=\RR\cup\{\infty\}$. With this, the domain of $G$ is $\mathrm{Dom}(G):=\{x\,:\,F(x)<\infty\}$.} We consider generally the Newton-type iteration,
\begin{align}
    \theta_t 
    &= \theta_{t-1} - \alpha_t \bar{W}_t g_t, \label{eq:quasi-Newton-1}\\
    g_t &:= \nabla G(\theta_{t-1})\label{eq:quasi-Newton-2},
\end{align}
where $\bar{W}_t$ approximates the inverse Hessian,
$\sfH_t := \nabla^2 G(\theta_{t-1})$, $\bar{W}_t\approx \sfH^{-1}_t$ and $\alpha_t$ is a step size.
\begin{definition}\label{def:Accurate}
	We say \eqref{eq:quasi-Newton-1}-\eqref{eq:quasi-Newton-2} implement an $\eta$-accurate Newton method if the following holds. Define the error matrix, which measures how well $\bar{W}_t$ approximates the true inverse Hessian $\sfH_t^{-1}$:
	\begin{equation}
		\bar{\m{E}}_t := \sfH^{1/2}_t \bar{W}_t  \sfH^{1/2}_t - I \,.
	\end{equation}
 Then $\|\bar{\m{E}}_t\|\le \eta$ for every iteration $t$.
\end{definition}
For a desired precision $\eps>0$, namely when one desires $\hat{\theta}$ with $G(\hat{\theta})-G(\theta^\star)\le \eps$, $\theta^\star := \argmin G(\theta)$,
let $\sfT(\eps)$ be the smallest $t\ge 1$ (a priori, if it exists) such that $G(\theta_t)-G(\theta^\star)\le \eps$ for all $t\ge \sfT(\eps)$. This notation suppresses the dependence on $G$ and the sequence of step sizes $\alpha_t$. We consider a variation where the step size is chosen by backtracking line search, see Algorithm~\ref{Alg:LineSearch}.
\begin{algorithm}[H]
	\caption{(Backtracking Line Search)}\label{Alg:LineSearch}
	\begin{algorithmic}[1]
			
			\Procedure{\ProcLineSearch}{$\theta_{t-1},g_t,\bar{W}_t$; $G(\cdot)$}
            \State \textbf{Parameters:} $a,b\in (0,1)$ line search parameters.
			\State $\alpha=1$ 
			\While{$G(\theta_{t-1}-\mu \bar{W}g_t) > G(\theta_{t-1})-a g_t^\T (\alpha \bar{W} g_t)$}
            \State $\alpha := b\alpha$
			\EndWhile
            \State\Return $\alpha$ 
			\EndProcedure
		\end{algorithmic}
\end{algorithm}
\vspace{-0.5cm}
This paper focuses on the case when $G$ is self-concordant, which is a standard setting in the literature on second order convex optimization, cf. \citep{boyd2004convex}. We remark that previous works, for example \citep{wang2018giant,dereziński2019distributed}, derived convergence guarantees for the approximate Newton method in a different setting, namely when the Hessian of $G$ is Lipschitz. Due to space constraint, we do not present their results here, though they are applicable in our setting as well. For self-concordant objectives, we are not aware of similar results previously being written down in the literature, hence our results may be of independent interest.
\begin{definition}
	We say a univariate convex function $G:\RR \to \bar{\RR}$ is self-concordant if it is thrice-differentiable and $|G'''(x)|\le 2G''(x)^{3/2}$ for any $x$ in its domain. A multivariate convex function $G:\RR^d \to \bar{\RR}$ is self-concordant if any 1-dimensional restriction $G_{x,y}(t)=G(x+ty)$ is self-concordant.
\end{definition}
\vspace{-0.2cm}
When $f$ is self-concordant, the objective $G(\theta)=f(\theta)+\frac{\lambda}{2}\|\theta\|^2$ in \eqref{eq:Convex-Problem} is self-concordant as well, as the sum of self-concordant function is itself self-concordant, cf. \citep{boyd2004convex}.
\begin{theorem}
    Suppose that (a) $G$ is self-concordant; (b) the Newton method is $\eta$-accurate with $\eta<1/5$; and (c) $\alpha_t$ is chosen by backtracking line search. For large enough numerical $C>0$, let 
    \[
    T_0 = C \frac{G(\theta_0)-G(\theta^\star)}{ab}, \quad T_1(\eps)=C\log\log(1/\eps),\quad T_2(\eps)=C\frac{\log(\eta/\eps)}{\log(1/\eps)}\,,
    \]
    where $a,b$ are the parameters used in backtracking line search (Algorithm~\ref{Alg:LineSearch}).
    Then 
    \begin{equation}
        \sfT(\eps) \le  \begin{cases}
            T_0 + T_1(\eps) \quad&\textrm{if}\quad \eps\ge \eta \\
            T_0 + T_1(\eta) + T_2(\eps) 
            \quad&\textrm{if}\quad \eps<\eta 
        \end{cases} \,.
    \end{equation}
    \label{thm:Convergence}
\end{theorem}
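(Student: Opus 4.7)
The plan is to mirror the classical two-phase convergence analysis of Newton's method for self-concordant objectives (\citep{boyd2004convex}), modified to account for the inexact Hessian. Introduce the exact Newton decrement $\lambda_t := \sqrt{g_t^\T \sfH_t^{-1}g_t}$; writing $\bar{W}_t = \sfH_t^{-1/2}(I+\bar{\m{E}}_t)\sfH_t^{-1/2}$, the inexact direction $-\bar{W}_t g_t$ satisfies
\[
g_t^\T \bar{W}_t g_t \in [(1-\eta)\lambda_t^2, (1+\eta)\lambda_t^2], \qquad \|\bar{W}_tg_t\|_{\sfH_t}^2 \in [(1-\eta)^2\lambda_t^2, (1+\eta)^2\lambda_t^2].
\]
For $\eta<1/5$ the multiplicative distortion is benign, so $-\bar{W}_t g_t$ is a strict descent direction and its natural squared norm in the Hessian metric is comparable to $\lambda_t^2$.

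First I would handle the \textbf{damped phase}. Using the standard self-concordant upper bound on $G(\theta_t - \alpha\bar{W}_t g_t)$, I would show that the backtracking line search (Algorithm \ref{Alg:LineSearch}) terminates with step size at least of order $b$ and produces a guaranteed per-iteration decrease $G(\theta_t)-G(\theta_{t+1}) \ge \gamma = \Theta(ab)$, as long as the inexact Newton decrement $\bar{\lambda}_t := (g_t^\T \bar{W}_t g_t)^{1/2}$---and hence $\lambda_t$---is bounded below by a small absolute constant (say $1/7$). After $T_0 \le C(G(\theta_0)-G^\star)/(ab)$ such iterations the algorithm enters the pure-Newton phase where $\lambda_t \le 1/7$.

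Next, for the \textbf{pure Newton phase}, I would show two things. (a) Once $\lambda_t$ is below an appropriate constant determined by $a,b,\eta$, the unit step $\alpha=1$ is accepted by the line search; this follows by plugging the two displayed estimates into the self-concordant upper bound. (b) Once $\alpha=1$ is used, the Newton decrement obeys a recursion
\[
\lambda_{t+1} \le c_1 \lambda_t^2 + c_2\eta \lambda_t.
\]
To derive this I would expand $g_{t+1} = g_t + \int_0^1 \sfH(\theta_t - s\bar{W}_t g_t)(-\bar{W}_t g_t)\,ds$, split the integrand as $\sfH_t(-\bar{W}_tg_t) + R_t(-\bar{W}_tg_t)$ where $R_t$ is controlled by self-concordance at scale $\lambda_t$, and then use the identity $\sfH_t\bar{W}_t = I + \sfH_t^{1/2}\bar{\m{E}}_t\sfH_t^{-1/2}$ so that the ideal Newton cancellation leaves exactly two residuals: one of Hessian norm $\sim \lambda_t^2$ (from $R_t$) and one of Hessian norm $\sim \eta \lambda_t$ (from $\bar{\m{E}}_t$). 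Passing from residual norms measured in $\sfH_t$ to the Newton decrement at $\theta_{t+1}$ additionally requires comparing $\sfH_{t+1}$ to $\sfH_t$ via the self-concordant ellipsoid bound; this bookkeeping, together with verifying that the constants $c_1,c_2$ are small enough for $\eta<1/5$ to close the argument, is where I expect the main technical difficulty.

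Finally, I would \textbf{iterate the recursion}. As long as $\lambda_t \ge \eta$, the $c_1\lambda_t^2$ term dominates and $\lambda_t$ exhibits quadratic decay, reaching $\lambda_t\lesssim \max\{\sqrt{\eps},\eta\}$ after $O(\log\log(1/\max\{\eps,\eta\}))$ pure-phase iterations. The self-concordant bound $G(\theta_t)-G(\theta^\star)\le 2\lambda_t^2$ then closes Case (i) of the theorem ($\eps \ge \eta$), giving $\sfT(\eps)\le T_0 + T_1(\eps)$. For Case (ii) ($\eps < \eta$), once $\lambda_t \le \eta$ the $c_2\eta\lambda_t$ term dominates and the recursion becomes a linear contraction with rate $O(\eta)$ per step; iterating produces the additional $T_2(\eps)$ iterations needed to bring $\lambda_t^2$ below $\eps$, yielding the stated bound $\sfT(\eps)\le T_0 + T_1(\eta) + T_2(\eps)$.
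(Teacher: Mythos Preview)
Your proposal is correct and follows essentially the same route as the paper: a two-phase analysis tracking the Newton decrement, a damped phase giving constant $\Theta(ab)$ decrease until the decrement falls below a small constant, and then the key recursion $\lambda_{t+1}\lesssim \lambda_t^2+\eta\lambda_t$ in the pure phase, which is split into a quadratic regime ($\lambda_t\ge\eta$, yielding $T_1$) and a linear regime with rate $O(\eta)$ ($\lambda_t<\eta$, yielding $T_2$). The only difference is that the paper does not re-derive the damped-phase decrease or the decrement recursion but instead cites them directly as Lemmas~6.4 and~6.6 of \citep{pilanci2015newton}, then simplifies the recursion to $\sfN(\theta_t)\le 4\sfN^2(\theta_{t-1})$ or $\sfN(\theta_t)\le 4\eta\,\sfN(\theta_{t-1})$ and iterates exactly as you describe.
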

\vspace{-0.5cm}
The proof of Theorem~\ref{thm:Convergence} builds on and extends an argument of \citep{pilanci2015newton}. Interestingly, Theorem~\ref{thm:Convergence} suggests that the approximate Newton method exhibits two stages: (1) While the error is above the noise floor $\eps\ge \eta$, the error decays quadratically fast. (2) Upon reaching the noise floor, $\eps\le \eta$, the convergence rate becomes linear; however, the linear rate is proportional to $\eta$, that is the error decays as $\sim (O(\eta))^t$. When $\eta$ is very small this decay rate is very fast.

Finally, Theorem~\ref{thm:Convergence} may be readily combined with Theorem~\ref{thm:Avg-Gauss} to yield error bounds for the entire end-to-end parallel method: under the setting of Theorem~\ref{thm:Avg-Gauss}, w.h.p. (and up to $\mathrm{polylog}(d)$ factors),
\[
\eta = \tilde{O}\left( \frac{1}{\sqrt{m}} + \sqrt{\frac{d}{mq}} 
 \right) \,.
\]

	
	\section{Numerical Experiments}\label{sec:Experiments}






We demonstrate the utility and validity of our results via numerical experiments. 
Due to space contraints, we defer some details and additional experiments to the appendix.

Our first set of experiments aims to demonstrate our adaptive sketching procedure (Algorithms~\ref{Alg:1}-\ref{Alg:2}) independently of any specific optimization context. For the first experiment, we consider diagonal matrices\footnote{We choose $H$ to be diagonal w.l.o.g. since the Gaussian sketch is invariant to orthogonal transformations.}
$H$ with eigenvalues $(h_1,\ldots,h_d)$ exhibiting polynomial spectral decay $h_k=k^{-\alpha}$, where $\alpha>0$ is a parameter. The dimension $d=10^4$ is fixed and large, and $\lambda=1$.
The sketching matrix is i.i.d. Gaussian (in the appendix, we present results for two more i.i.d. sketching matrices). 
We run Algorithm~\ref{Alg:1}, repeating for $T=20$ Monte-Carlo trials, and report the sketching dimension found, see Table~\ref{table:Exp1}. Remarkably, our algorithm consistently succeeds, namely finds a sketching dimension that is within $1.5\EffDim(\lambda)\le m \le \max\{m_0,4\EffDim(\lambda)\}$ - per Theorems~\ref{thm:Asymptotic} and \ref{thm:Alg1-Gauss}. 
\begin{table}[]
        \centering
        {\small
        \begin{tabular}{ |c|c|c|c|c| } 
         \hline
         $\alpha$ & $\EffDim(\lambda)$ &  $m_0$ & Success rate & Avg. dim.\\
         \hline
         $1$ &$8.8$        &$10$ &$1.0$ &$20$\\ 
         $2/3$ &$59.7$             &$10$ &$1.0$ &$160$\\ 
         $1/2$   &$190.4$              &$10$ &$1.0$ &$640$\\ 
         \hline
        \end{tabular}
        }
        \caption{Success rate of Algorithm~\ref{Alg:1}.}
        \label{table:Exp1}
\end{table}

Next, we demonstrate that Algorithm~\ref{Alg:2} produces low-bias estimates of $W$.
Note that the average Frobenius error $\|\hat{W}-W\|_F^2/d^2$ is a reasonable proxy for the bias of $\hat{W}$, see Theorems~\ref{thm:Determinstic-Cov}-\ref{thm:Deterministic-Sketch}. 
In the next experiment, we consider two random ensembles of matrices $H$, which we call here (L) and (R), and exhibits respectively slow and fast spectral decay; we defer the details to the appendix, due to space constraints. Figure~\ref{fig:Exp2} plots the bias proxy for $m$ increasing, starting from $m=1.5\EffDim(\lambda)$, for the Gaussian sketch. As $m$ increases, the bias decreases (roughly at rate $1/\sqrt{m}$). We also plot the bias proxy for a naive estimator of $W$, without bias correction ($\tilde{\lambda}=\lambda$); clearly, our algorithm has substantially smaller bias throughout.
\begin{figure}[t!]
\centering 
\includegraphics[scale=0.30]{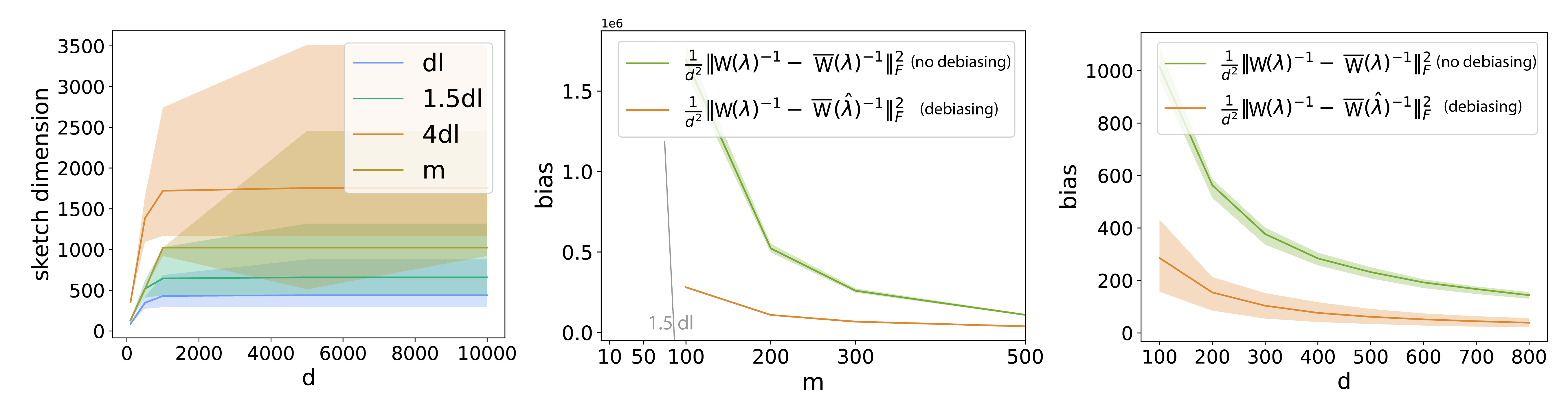}
\caption{
The bias proxy of the bias-corrected inverse Hessian estimator is substantially lower than without bias correction. Rightmost plot: ensemble (R); left and middle plots: ensemble (L). Leftmost plot: the sketching dimension found by Algorithm~\ref{Alg:1} on ensemble (L). Shaded area: 20\%-80\% confidence interval; We take $T=10$  Monte-Carlo trials. 
}
\label{fig:Exp2}
\end{figure}

Our final set of experiments concerns the performance of the proposed optimization strategy in its entirety, focusing on the benefits of bias reduction. 
We present experimental results for real-world optimization tasks---for both ridge and logistic regression---using publicly available UCI datasets \citep{chang2011libsvm}. Due to space constraints, the details appear in the appendix. 
We compare our method with an approximate parallel Newton method where Hessians are sketched ($m$ chosen by Algorithm~\ref{Alg:1}), but no debiasing is done; that is, $\tilde{\lambda}=\lambda$. It is evident from our results, summarized in Figure~\ref{fig:UCI}, that bias correction accelerates convergence, often substantially.

\begin{figure*}[ht!]
\begin{subfigure}{}
\includegraphics[width=3.2cm]{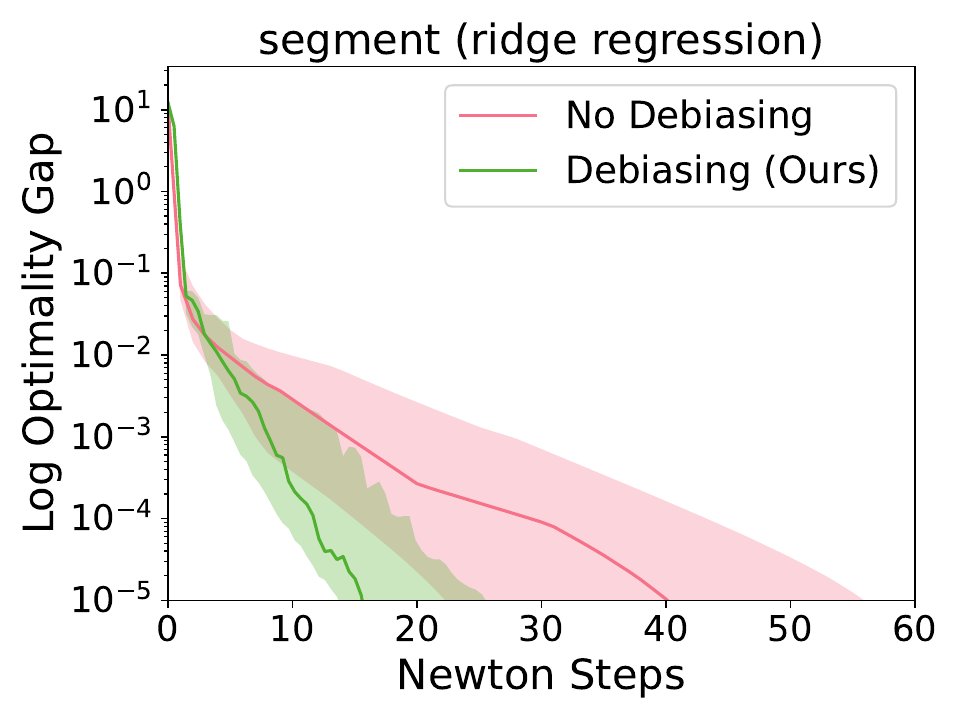}
\end{subfigure}
\hfill
\begin{subfigure}{}
\includegraphics[width=3.2cm]{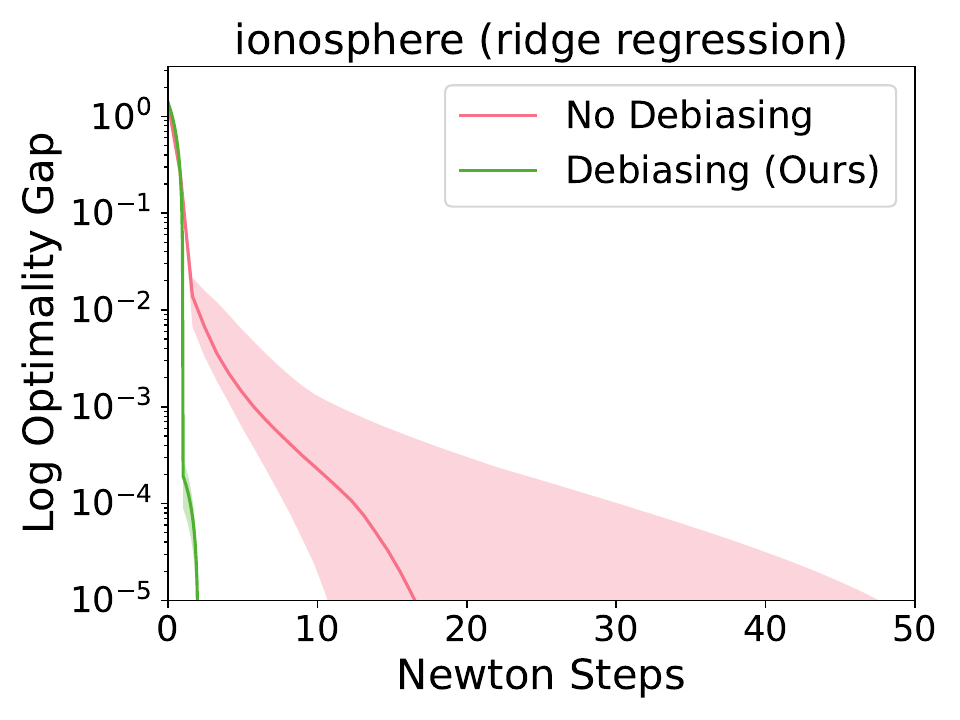}
\end{subfigure}
\hfill
\begin{subfigure}{}
\includegraphics[width=3.2cm]{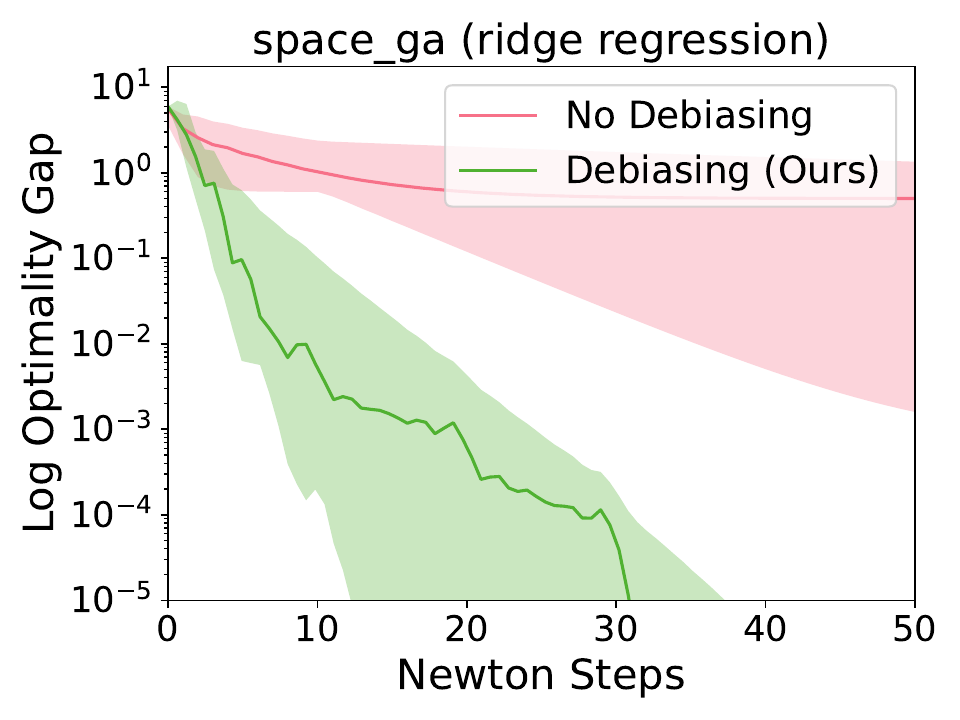}
\end{subfigure}
\hfill
\begin{subfigure}{}
\includegraphics[width=3.2cm]{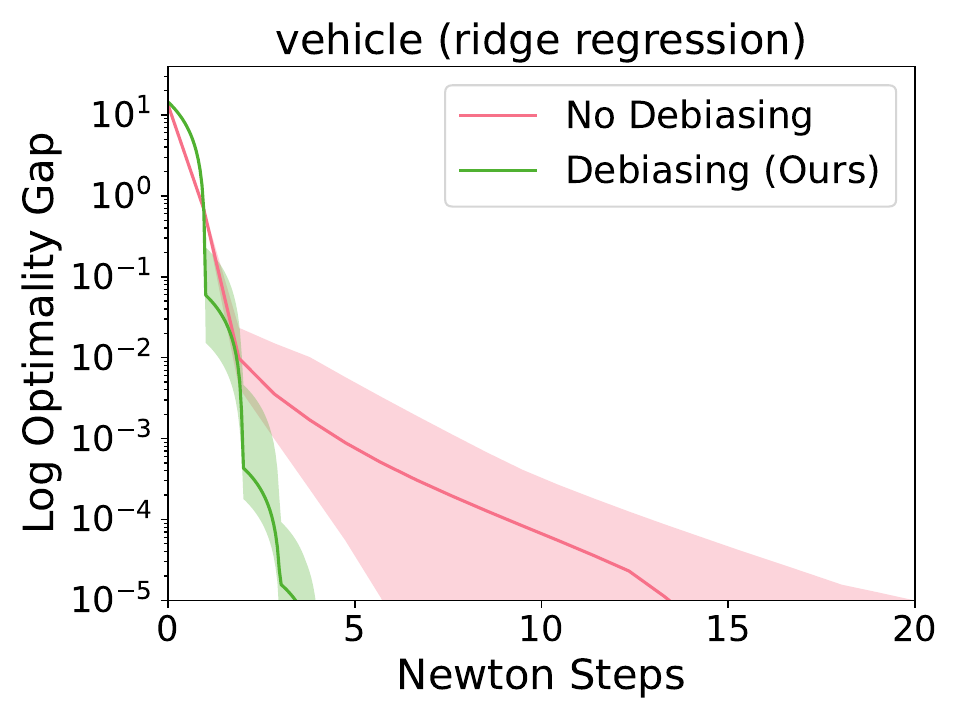}
\end{subfigure}
\\
\begin{subfigure}{}
\includegraphics[width=3.2cm]{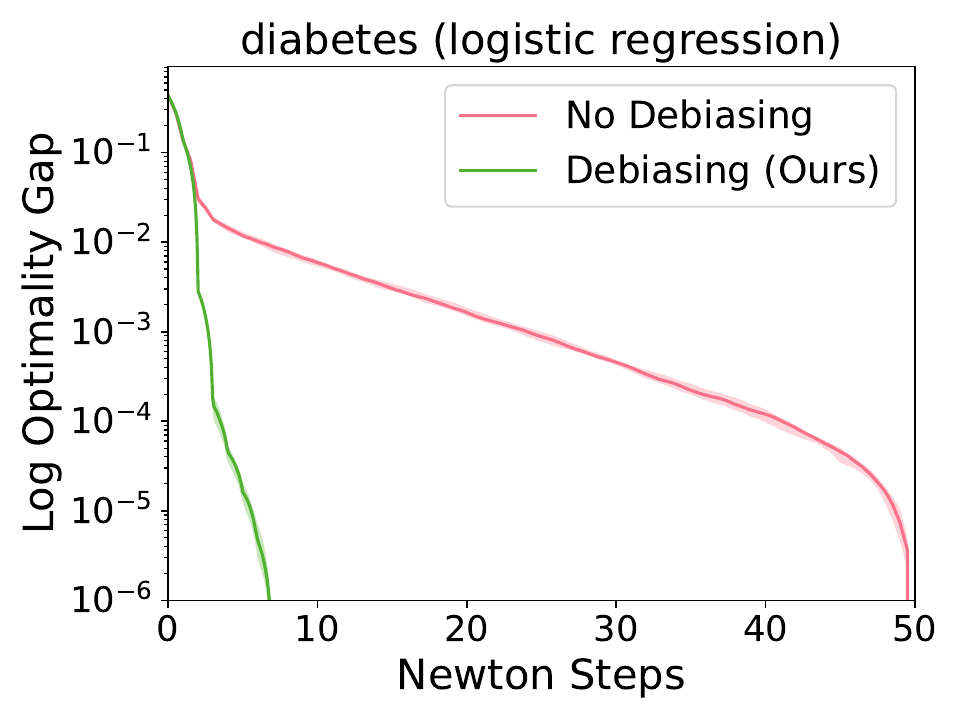}
\end{subfigure}
\hfill
\begin{subfigure}{}
\includegraphics[width=3.2cm]{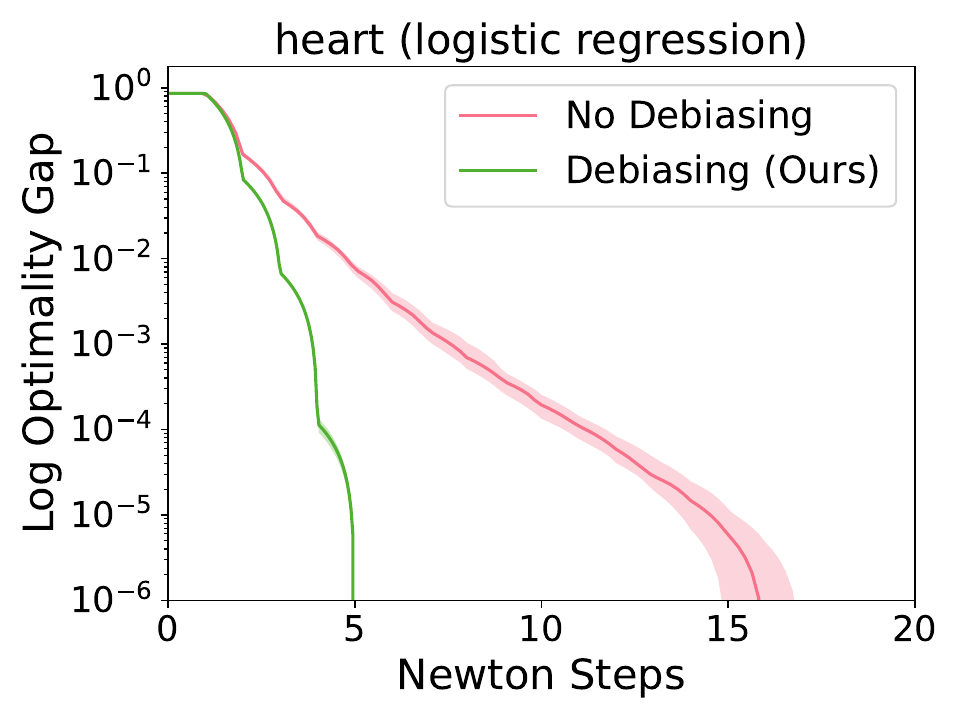}
\end{subfigure}
\hfill
\begin{subfigure}{}
\includegraphics[width=3.2cm]{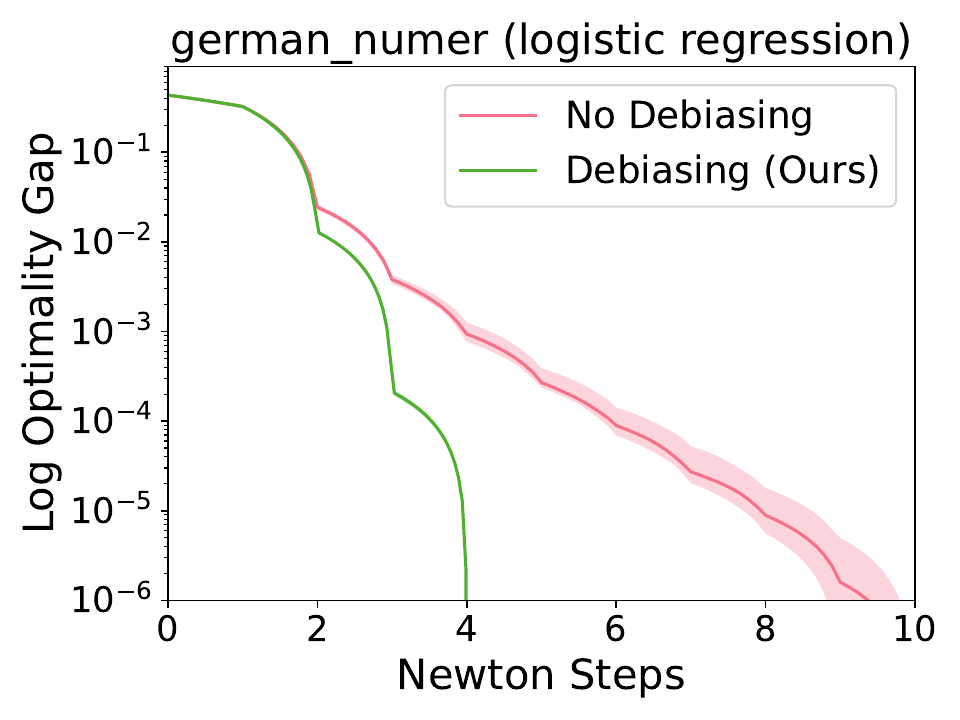}
\end{subfigure}
\hfill
\begin{subfigure}{}
\includegraphics[width=3.2cm]{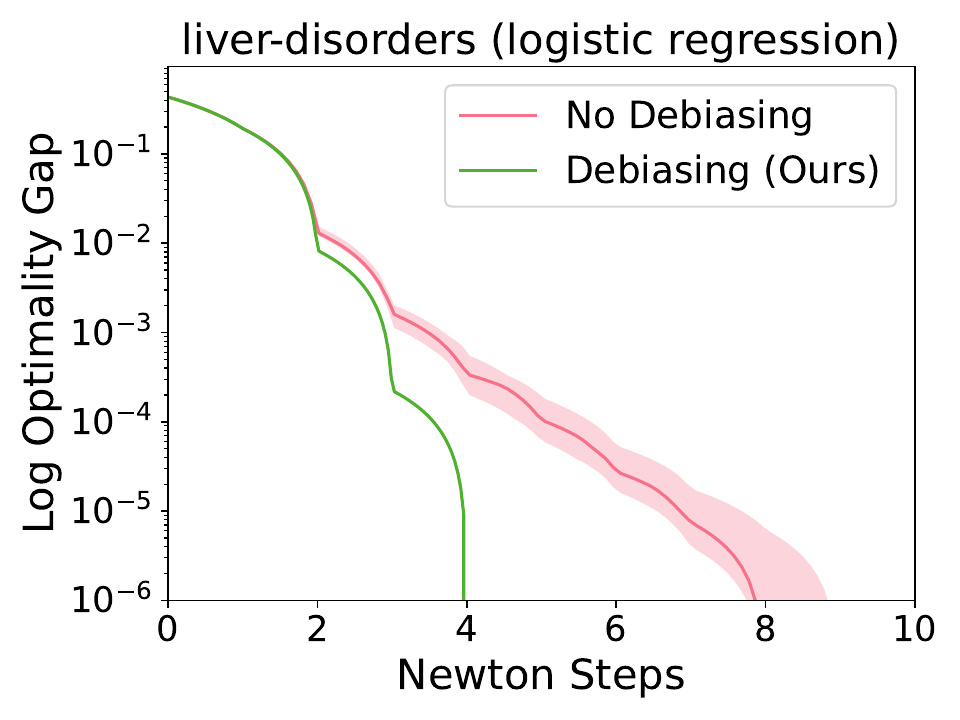}
\end{subfigure}

\caption{
Improved convergence of our parallel sketched Newton method with bias correction.
The title of sub-figure corresponds to the dataset used. Repeating for $T=10$ Monte-Carlo trials, the curve corresponds to the median and the shaded part to a $20\%$-$80\%$ confidence interval.
}\label{fig:UCI}
\end{figure*}

	\section*{Acknowledgments}
	We are grateful to Daniel Lejeune for inspiring conversations about his work \citep{lejeune2022asymptotics}.

Mert Pilanci and Fangzhao Zhang were supported in part by National Science Foundation (NSF) under Grant DMS-2134248; in part by the NSF CAREER Award under Grant CCF-2236829; in part by the U.S. Army Research Office Early Career Award under Grant W911NF-21-1-0242; in part by the Office of Naval Research under Grant N00014-24-1-2164. In addition, Fangzhao Zhang was supported in part by a Stanford Graduate Fellowship.

The work of Elad Romanov was supported in part by the NSF under Grant DMS-1811614 (PI: Donoho), and in part by the personal faculty funds of Prof. David Donoho, for whom he wishes to extend his sincere appreciation.

	\printbibliography

	\newpage
	
	\appendix
	
	\clearpage
\newpage
\appendix

\renewcommand \thepart{} 
\renewcommand \partname{}
\addcontentsline{toc}{section}{Appendix} 
\part{Appendix} 
\parttoc 

\newpage




\newpage

\section{Additional Experiments and Details}\label{exp_supp}

\subsection{Extension of Table~\ref{table:Exp1}}
\label{sec:Exp-Table-Appendix}

We run experiments with random sketching matrices with i.i.d. entries of the following types: (1) Gaussian (G): $S_{i,j}\sim \m{N}(0,1/m)$; Rademacher (R): $S_{i,j}\sim \mathrm{Unif}(\pm 1/\sqrt{m})$; (3) sparse Rademacher (SR): $S_{i,j}=0$ w.p. $1-p$ and $S_{i,j}\sim \mathrm{Unif}(\pm 1/\sqrt{pm})$ w.p. $p$ for $p=1/10$. 
The following is an extension of Table~\ref{table:Exp1}, to include also the Rademacher (R) and sparse Rademacher (SR) sketch types:

\begin{table}[h!]
        \centering
        {\small
        \begin{tabular}{ |c|c|c|c|c|c| } 
         \hline
         $\alpha$ & $\EffDim(\lambda)$ &  $m_0$ & Sketch &Success rate & Avg. dim.\\
         \hline
         $1$ &$8.8$        &$10$ &G &$1.0$ &$20$\\ 
         $2/3$ &$59.7$             &$10$ &G &$1.0$ &$160$\\ 
         $1/2$   &$190.4$              &$10$ &G &$1.0$ &$640$\\ 
         $1$ &$8.8$        &$10$ &R &$1.0$ &$20$\\ 
         $2/3$ &$59.7$             &$10$ &R &$1.0$ &$160$\\ 
         $1/2$   &$190.4$              &$10$ &R &$1.0$ &$640$\\ 
         $1$ &$8.8$        &$10$ &SR &$1.0$ &$20$\\ 
         $2/3$ &$59.7$             &$10$ &SR &$1.0$ &$160$\\ 
         $1/2$   &$190.4$              &$10$ &SR &$1.0$ &$640$\\ 
         \hline
        \end{tabular}
        }
        \caption{Success of Algorithm~\ref{Alg:1}. $T=20$ Monte-Carlo trials.}
        \label{table:Exp1-Appendix}
\end{table}
Our method appears to be remarkably robust, per the finding in Table~\ref{table:Exp1-Appendix}, to the extent that the table may seem somewhat pointless to include. We nonetheless provide it (and code to run this experiment) as a sort of sanity check.

\subsection{Details for Figure~\ref{fig:Exp2}}\label{sec:fig1_detail}



We provide additional details for the experiments in Figure~\ref{fig:Exp2}. In the middle and rightmost plots, the estimator $\bar{W}$ is obtained by averaging $q=500$ local sketched Hessians, both with and without bias correction.

Ensemble (L): We use $L=10^{-3}$. The matrix $H$ is $H=X^\T X$, where $X\in \RR^{n\times d}$ has the singular value decomposition (SVD) $X=UDV^\T$; $U\in \RR^{n\times d},V\in \RR^{d\times d}$  have orthonormal columns and $D\in\RR^{d\times d}$ is diagonal. $U,V$ are sampled uniformly (Haar) from the corresponding Stieffel manifolds (have a rotationally invariant distribution).  
For the leftmost plot $n=10^4$, and $d$ varies. For the middle plot, we fix $d=10^3$. The diagonal entries of $D$ are random, and satisfy
\[
D_{k,k} = (0.9+\eps_k)^{k/2},\quad \eps_k\sim \m{N}(0,10^{-4})\,.
\]

Ensemble (R): We use $\lambda=10^{-5}$. Similarly to ensemble (L), $H=X^\T X$ for $X\in \RR^{n\times d}$, with the singular vectors of $X$ of similarly uniformly random. We use $n=10^4$, $d=10^3$. The singular values of $X$ are  
non-random, with $D_{k,k}=(k/d)^2$.


\subsection{Additional Synthetic Optimization Tasks}\label{sec:exp_syn}

\begin{figure*}[ht!]
\centering 
\includegraphics[scale=0.5]{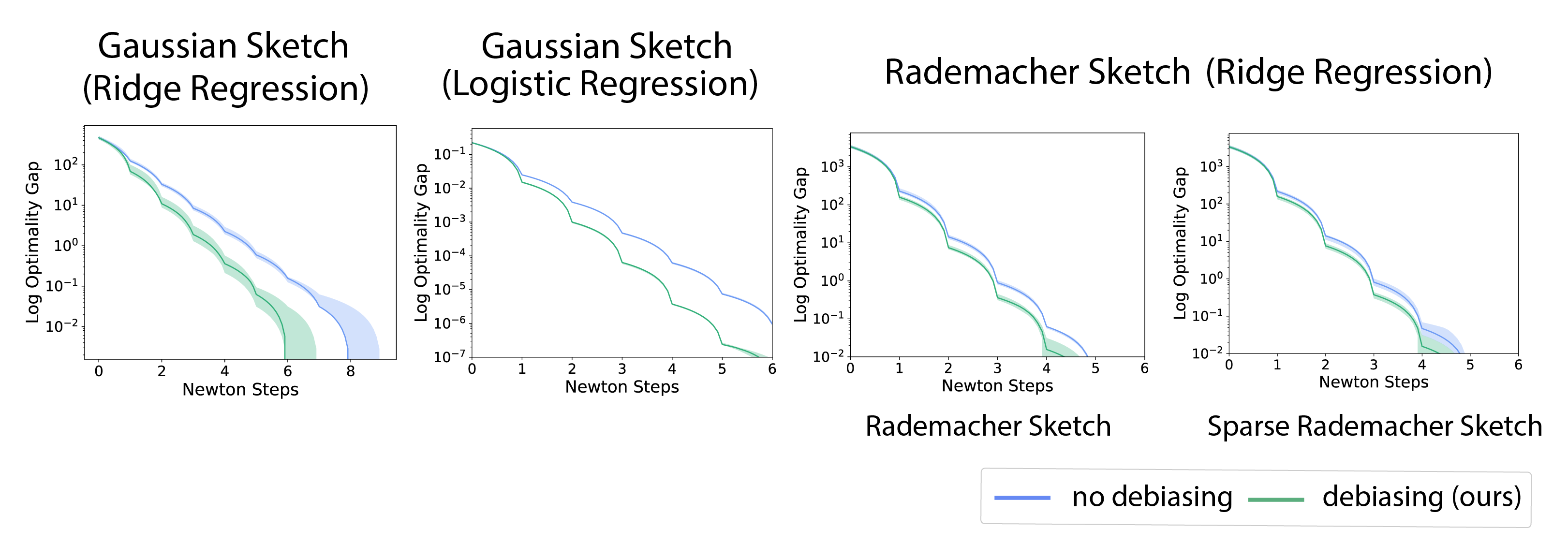}
\caption{
Convergence of the parallel Newton method with Hessian sketching, on synthetic optimization tasks.
}\label{sanity2}
\end{figure*}

In this section we present additional experiments on synthetic optimization tasks, aiming to test the performance of our proposed end-to-end parallel optimization method. 
Specifically, we consider ridge and logistic regression. We test both Gaussian and Rademacher sketches (see Section~\ref{sec:Exp-Table-Appendix}).

\paragraph{Distribution of covariates (design).} 
We generate designs $X\in \RR^{n\times d}$ with $n=10^4$ and $d=500$. Denote the SVD $X=UDV^\T$. The singular vectors $U\in \RR^{n\times d},V\in \RR^{d\times d}$ are uniformly random. The singular values decay exponentially, $D_{k,k}=0.99^{k/2}$.

\paragraph{Parameters.}
In all our experiments, $\lambda=10^{-3}$. For experiments with ridge regression, the number of workers is $q=10$; for logistic regression, it is $q=20$.

\paragraph{Ridge regression.} 
The responses are generated according to $y_i=x_i^\T \theta^\star + \m{N}(0,0.01)$, where the ground-truth regressor $\theta^\star \in \RR^d$ is generated according to $\theta^\star\sim \m{N}(0,I)$.
The unregularized objective $F(\theta)$ is the L2 loss:
\[
F(\theta) = \frac1n \sum_{i=1}^n (x_i^\T \theta - y_i)^2 = \|X\theta - y\|^2,
\]
where $x_1,\ldots,x_n\in \RR^d$ are the rows of $X$. 
Recall that we aim to minimize the regularized objective $G(\theta):= F(\theta) + \frac{\lambda}{2}\|\theta\|^2$.

\paragraph{Logistic regression.}
We generate labels according to 
\[
y_i = (\sign\left[x_i^\T \theta^\star + \m{N}(0,10^4) \right]+ 1)/2 \in \{0,1\} \,,
\]
where $\theta^\star \sim \m{N}(0,I)$. The function $F(\theta)$ is the log loss:
\[
F(\theta) = -\frac1n \sum_{i=1}^n \left[
y_i \log\left(\frac{1}{1+e^{-x_i^\T \theta}}  \right)
+ (1-y_i)\log\left(1 - \frac{1}{1+e^{-x_i^\T \theta}}  \right) \right]\,.
\]

\paragraph*{}
Figure~\ref{sanity2} plots the error $G(\theta_t)-\argmin_{\theta} G(\theta)$ over the iterations $t$ of the algorithm. Each plot is obtained by $T=10$ Monte-Carlo trials; the shaded area corresponds to a 20\%-80\% confidence interval. It is evident that bias correction improves on the the convergence speed of the algorithm over the uncorrected ($\tilde{\lambda}=\lambda$) alternative.

\subsection{Details for Experiments on UCI Data Sets (Figure~\ref{fig:UCI})}\label{sec:uci_detail}

\begin{figure}[th!]
\begin{subfigure}{}
\includegraphics[width=3.2cm]{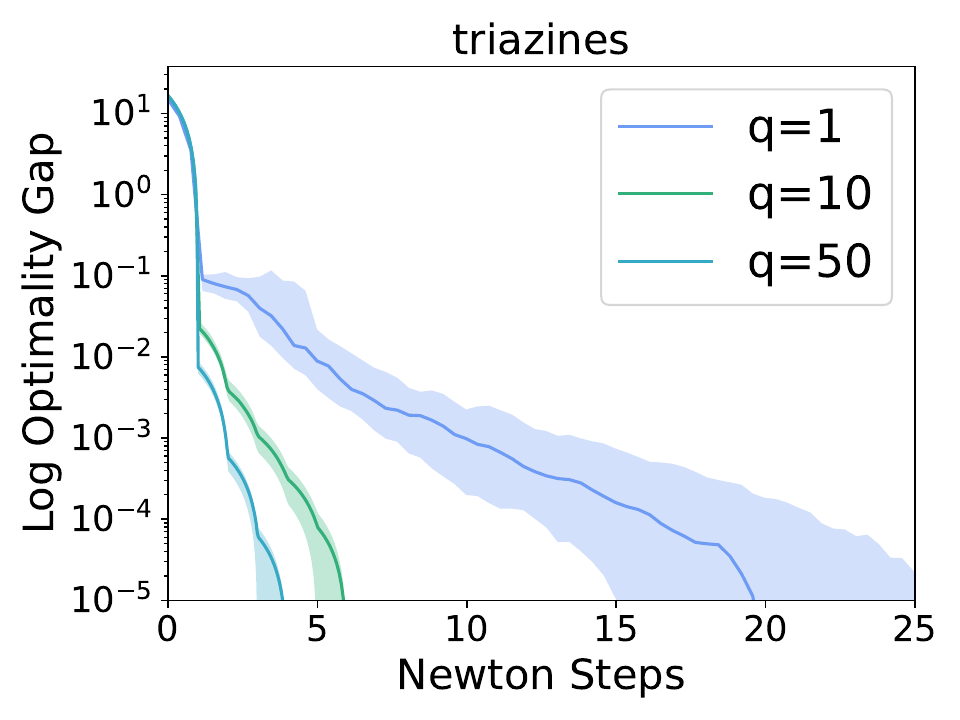}
\end{subfigure}
\hfill
\begin{subfigure}{}
\includegraphics[width=3.2cm]{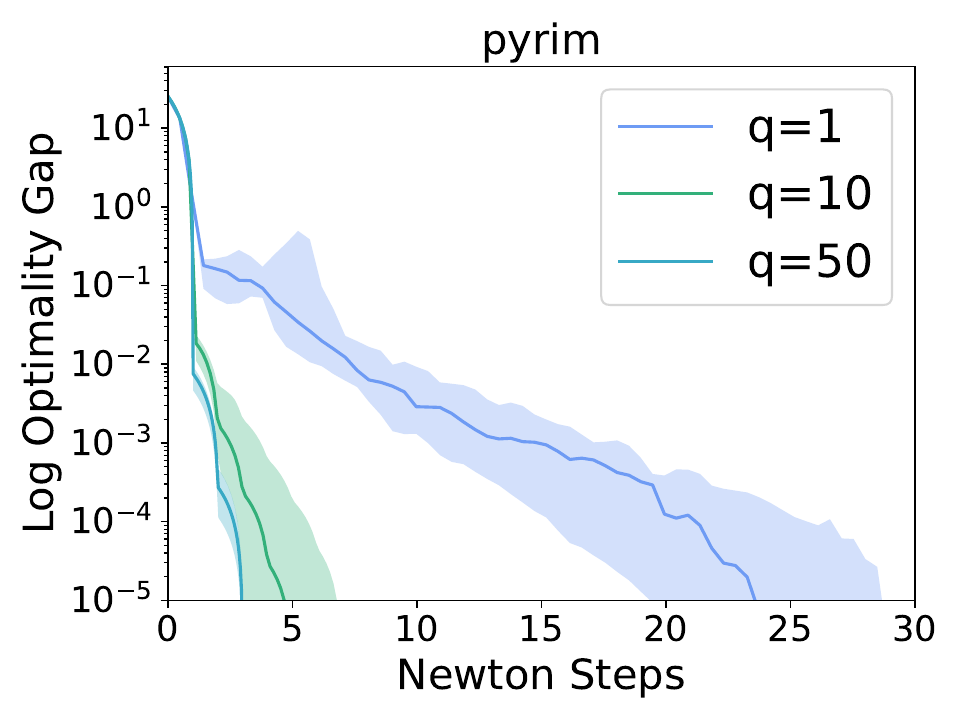}
\end{subfigure}
\hfill
\begin{subfigure}{}
\includegraphics[width=3.2cm]{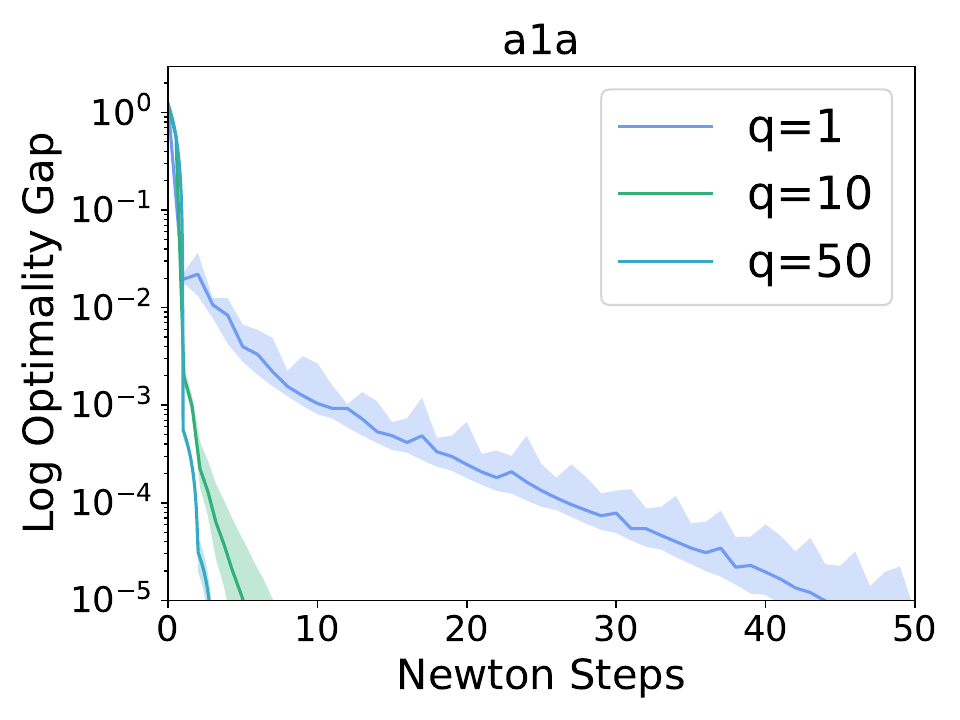}
\end{subfigure}
\hfill
\begin{subfigure}{}
\includegraphics[width=3.2cm]{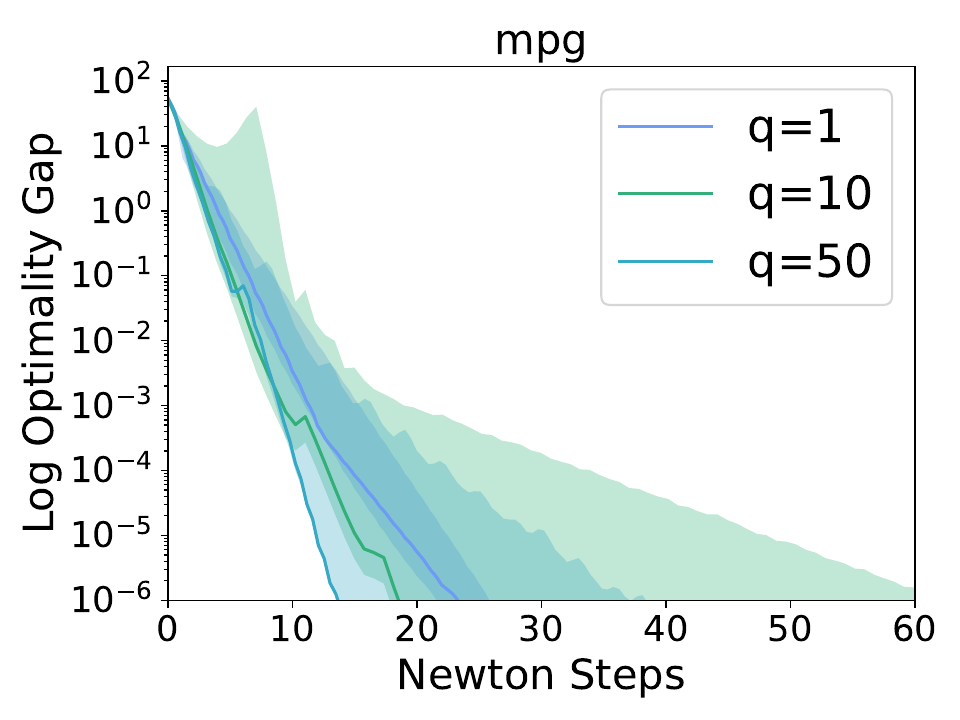}
\end{subfigure}
\\
\begin{subfigure}{}
\includegraphics[width=3.2cm]{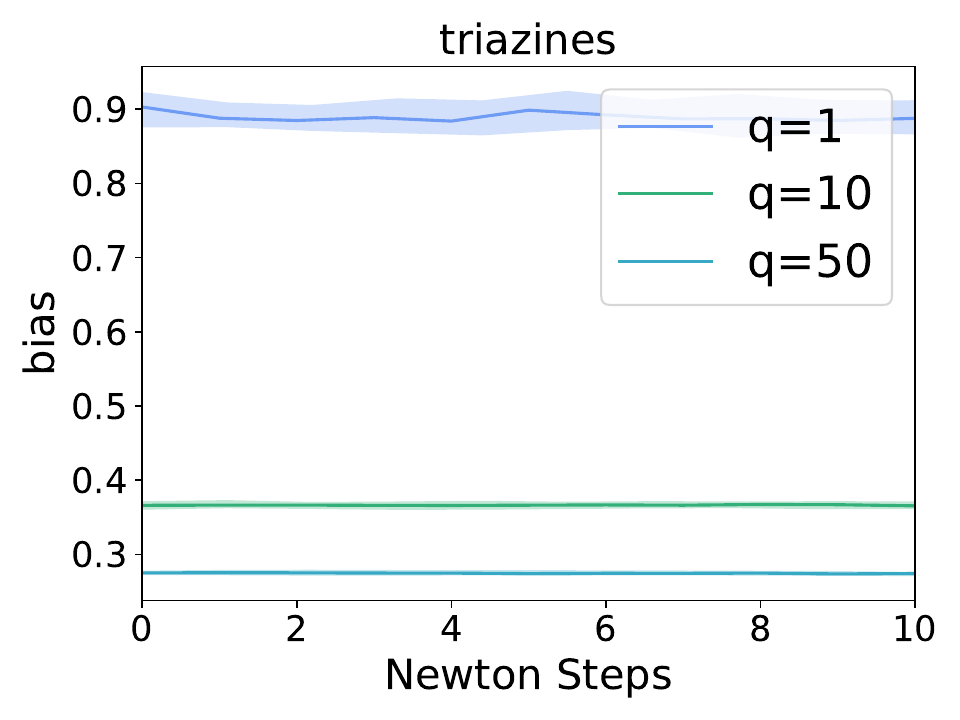}
\end{subfigure}
\hfill
\begin{subfigure}{}
\includegraphics[width=3.2cm]{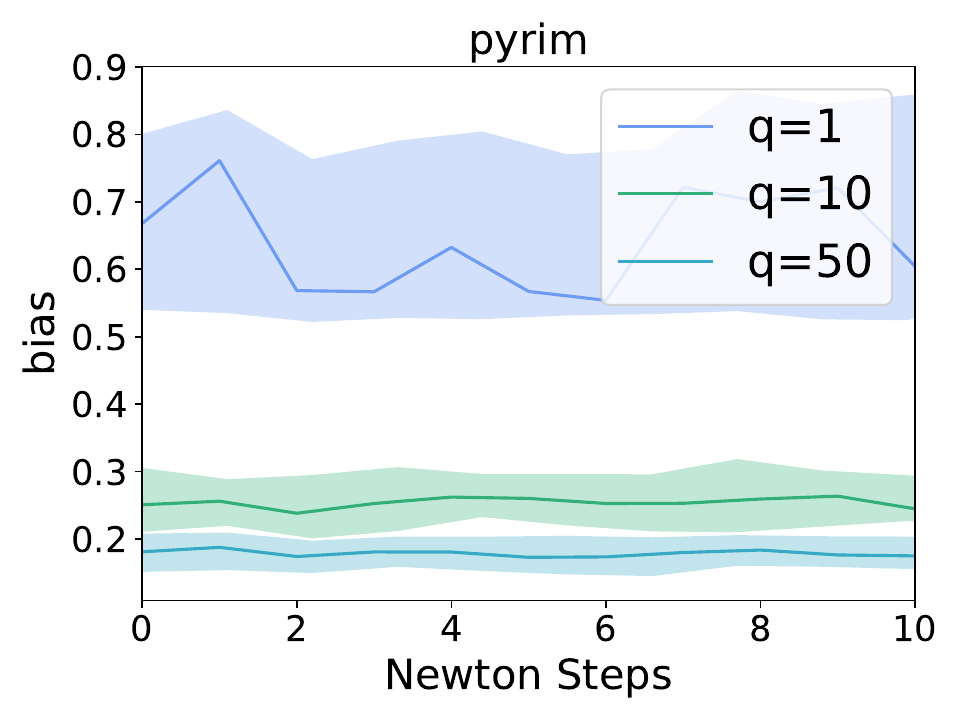}
\end{subfigure}
\hfill
\begin{subfigure}{}
\includegraphics[width=3.2cm]{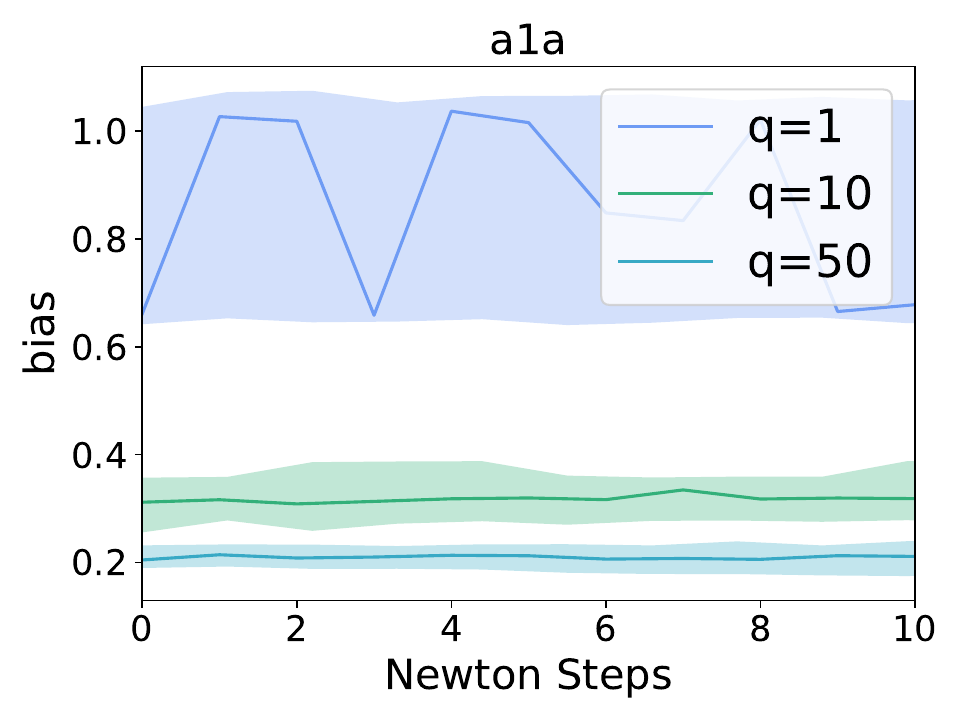}
\end{subfigure}
\hfill
\begin{subfigure}{}
\includegraphics[width=3.2cm]{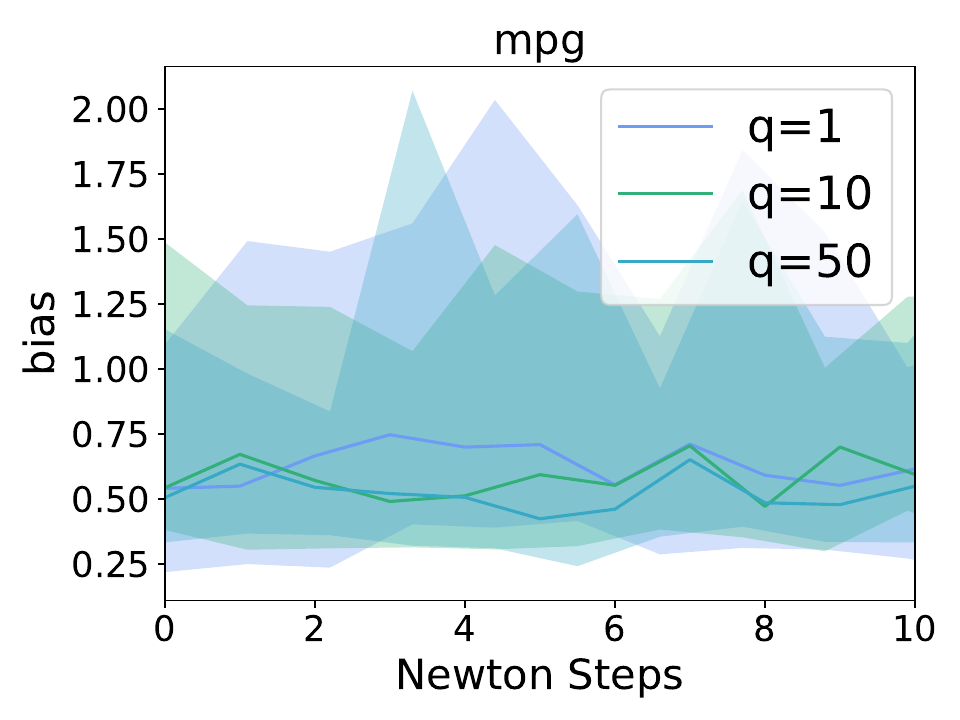}
\end{subfigure}

\caption{Convergence 
rate for optimization tasks on additional UCI data sets. Shaded area corresponds to 20\%-80\% confidence interval. 
}
\label{uci_worker}
\end{figure}

We provide additional details for the experiments reported in Figure~\ref{fig:UCI}, and present results for additional data sets from the UCI data repository \citep{chang2011libsvm}.

We run experiments directly on the available data, without additional preprocessing. The objectives, for ridge and logistic regression, are as described in Section~\ref{sec:exp_syn}, but now the covariates and reponses/labels are given and not generated. 


\paragraph{Parameters.} We use $\lambda=10^{-3}$ for all experiments. We vary the number of workers between $q\in \{1,10,50\}$. For every data set, we run $T=10$ Monte-Carlo trials.

\paragraph{Results.} The experiments are reported in Figure~\ref{uci_worker}. The plots on the top correspond to the optimality gap $G(\theta_t)-\argmin_{\theta} G(\theta)$, $G$ being the regularized objective, as a function of the iteration number $t$. The bottom plots are the inverse Hessian normalized Frobenius error $\|W_t-\bar{W}_t\|_F^2/d^2$. We see that as $q$ grows larger, the algorithm tends to converge faster; though the benefit of further increasing $q$ tends to diminish when $q$ is large.

\newpage

\section{Non-Asymptotic Random Matrix Theory Results}

The proofs of Theorems~\ref{thm:Alg1-Gauss}-\ref{thm:Avg-Gauss}, rely on non-asymptotic results from random matrix theory that that we state in this section and prove later on. 
The results below concern the behavior of the random matrices $S HS^\T,H^{1/2}S^\T S H^{1/2}$ where $H\succeq 0$ and $S\in \RR^{m\times d}$ is an i.i.d. Gaussian matrix $S_{ij}\sim \m{N}(0,1/m)$.

The following result concerns the smallest eigenvalue of the matrix $S H S^\T\in \RR^{m\times m}$. 
\begin{theorem}\label{thm:Smallest-Eigenvalue-Gauss}
	Suppose $S$ has i.i.d. Gaussian entries.
	There are constants $c_1,c_2,c,C>0$ such that the following holds.
    Set any $\eta>0$. 
	If 
	$m \le c_1\EffDim(\eta)$ then w.p. $1-Ce^{-cm}$,
	\[
	\lambda_{\min}(S H S^\T) \ge c_2 \eta  \frac1m\EffDim(\eta) \,.
	\]
\end{theorem}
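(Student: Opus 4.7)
The plan is to truncate the spectrum of $H$ at level $\eta$ and then control the resulting matrix by a Gaussian chi-squared concentration bound combined with an $\eps$-net over the unit sphere in $\RR^m$. Let $\tilde{H}$ be the PSD matrix with the same eigenvectors as $H$ and eigenvalues $\tilde{h}_i := \min(h_i,\eta)$. Since $\tilde{H}\preceq H$, monotonicity gives $S\tilde{H}S^\T\preceq SHS^\T$, so it suffices to lower-bound $\lambda_{\min}(S\tilde{H}S^\T)$. A short spectral calculation based on the pointwise inequalities $\tfrac{1}{2}\min(h,\eta) \le \eta h/(h+\eta) \le \min(h,\eta)$ shows $\eta\EffDim(\eta) \le \tr(\tilde{H}) \le 2\eta\EffDim(\eta)$; hence the target bound is equivalent to $\lambda_{\min}(S\tilde{H}S^\T) \gtrsim \tr(\tilde{H})/m$, i.e.\ to showing that $S\tilde{H}S^\T$ concentrates around its mean $\Expt[S\tilde{H}S^\T]=(\tr(\tilde{H})/m)I_m$.

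For the pointwise concentration, I would diagonalize $\tilde{H}$ and write $m\cdot S\tilde{H}S^\T = \sum_{k=1}^d \tilde{h}_k g_k g_k^\T$, with $g_k := \sqrt{m}\,S_{\cdot,k} \sim \m{N}(0,I_m)$ iid. For any fixed $u\in\RR^m$ with $\|u\|=1$, the quadratic form $u^\T\!\bigl(\sum_k \tilde{h}_k g_k g_k^\T\bigr)u = \sum_k \tilde{h}_k z_k^2$ is a weighted chi-squared with mean $\tr(\tilde{H})$, and Laurent--Massart yields $\Pr\bigl[|\sum_k \tilde{h}_k z_k^2 - \tr(\tilde{H})| > t\,\tr(\tilde{H})\bigr] \le 2\exp\bigl(-c\min\{t^2 \tilde{r},\;t\,r\}\bigr)$, where $\tilde{r} := \tr(\tilde{H})^2/\|\tilde{H}\|_F^2$ and $r := \tr(\tilde{H})/\|\tilde{H}\|$. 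The truncation is precisely what makes these effective ranks comparable to $\EffDim(\eta)$: because $\|\tilde{H}\|\le \eta$ and $\|\tilde{H}\|_F^2\le \eta\tr(\tilde{H})$, both $\tilde{r}$ and $r$ are at least $\tr(\tilde{H})/\eta \asymp \EffDim(\eta) \ge m/c_1$. For $c_1$ a sufficiently small universal constant, both effective ranks exceed an arbitrarily large multiple of $m$, and the pointwise tail decays as $e^{-Cm}$.

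Finally, I would upgrade the pointwise bound to an operator-norm bound via a standard $\tfrac{1}{4}$-net argument over $S^{m-1}$ (of cardinality $\le 9^m$): a union bound yields $\|m\cdot S\tilde{H}S^\T - \tr(\tilde{H}) I_m\| \le \tr(\tilde{H})/4$ w.p.\ $1-Ce^{-cm}$, whence $\lambda_{\min}(S\tilde{H}S^\T)\ge \tfrac{3}{4}\tr(\tilde{H})/m \gtrsim \eta\EffDim(\eta)/m$, as required. The main piece of care is in balancing $c_1$ against the $9^m$ covering loss, so that the pointwise probability $e^{-c'\tilde{r}}$ dominates by a factor $e^{-cm}$; this is exactly where the hypothesis $m\le c_1\EffDim(\eta)$ is used quantitatively. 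I expect this bookkeeping, rather than any single step, to be the main obstacle. One instructive caveat: a plain matrix-Bernstein bound would lose an additional $\log m$ factor and require $\EffDim(\eta)\gtrsim m\log m$, so leveraging the Gaussian-specific Hanson--Wright/Laurent--Massart tail seems genuinely necessary in order to obtain a $c_1$ that is independent of $m$ and of the condition number of $H$.
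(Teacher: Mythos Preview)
Your proposal is correct and follows essentially the same approach as the paper: truncate $H$ so that $\tilde{H}\preceq H$ with $\|\tilde{H}\|\le\eta$ and $\tr(\tilde{H})\asymp\eta\,\EffDim(\eta)$, then combine Hanson--Wright/Laurent--Massart with an $\eps$-net over $S^{m-1}$. The only cosmetic difference is the truncation: the paper takes $\tilde{H}=H(\eta^{-1}H+I)^{-1}$ (so $\tr(\tilde{H})=\eta\,\EffDim(\eta)$ exactly), while you take the hard cutoff $\tilde{h}_i=\min(h_i,\eta)$; your net argument, which directly controls $\|m\,S\tilde{H}S^\T-\tr(\tilde{H})I_m\|$, is in fact a bit more streamlined than the paper's, which separately upper-bounds $\|S\tilde{H}S^\T\|$ before applying the net to the minimum.
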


The following two results concern the closedness of the empirical (companion) Stieltjes transform
\begin{align}
	\StielEmp(z) := \frac1m\tr\left( S H S^\T-zI \right)^{-1},
\end{align}
to the deterministic function $\Stiel(z)$, the solution of the Marchenko-Pastur equation \eqref{eq:MP-Eq}.

The following concentration inequality is well known, cf. \cite{bai2010spectral}. 
\begin{lemma}\label{lem:StielConc}
	Suppose that $S=\frac{1}{\sqrt{m}}[r_1|\cdots|r_m]^\T$ where
	the rows $r_1,\ldots,r_m\in\RR^d$ are independent random variables. For any $z<0$ and $t\ge 0$,
	\[
	\Pr(|\StielEmp(z)-\Expt\StielEmp(z)|\ge t) \le 2e^{-cmz^2t^2} \,.
	\]
\end{lemma}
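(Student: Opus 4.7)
The plan is to employ the classical martingale / bounded-differences scheme for Stieltjes transforms of sample covariance matrices (cf.\ Bai and Silverstein, Chapter 9), the central ingredient being a rank-perturbation bound on the trace of the resolvent.

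First I would introduce the filtration $\m{F}_k = \sigma(r_1,\ldots,r_k)$, $k=0,1,\ldots,m$, and decompose
\[
\StielEmp(z) - \Expt\StielEmp(z) = \sum_{k=1}^m D_k, \qquad D_k := \Expt[\StielEmp(z) \mid \m{F}_k] - \Expt[\StielEmp(z) \mid \m{F}_{k-1}],
\]
which is a martingale difference sequence. To apply Azuma-Hoeffding, I need a uniform almost-sure bound on each $|D_k|$.

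The key estimate is the following interlacing-based bound: if $A, B \succeq 0$ are $m \times m$ Hermitian with $\rank(A-B) \le r$, then for every $z < 0$,
\[
\bigl| \tr(A - zI)^{-1} - \tr(B - zI)^{-1} \bigr| \le r/|z|.
\]
This reduces by telescoping to the rank-one case: Cauchy interlacing gives $\lambda_i(B) \in [\lambda_{i+1}(A), \lambda_{i-1}(A)]$ (decreasing order), and since $x \mapsto 1/(x+|z|)$ is monotone and bounded by $1/|z|$ on $\RR_{\ge 0}$, the telescoping sum $\sum_i [(\lambda_i(A)+|z|)^{-1} - (\lambda_i(B)+|z|)^{-1}]$ collapses to an endpoint contribution bounded by $1/|z|$. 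Applying this with $A = SHS^\T$ and $B = \widetilde{S}^{(k)} H (\widetilde{S}^{(k)})^\T$, where $\widetilde{S}^{(k)}$ replaces row $k$ of $S$ by an independent copy $\tfrac{1}{\sqrt{m}}(r_k')^\T$, modifies only the $k$-th row and column of $SHS^\T$, a rank-$\le 2$ update. Thus
\[
\bigl|\StielEmp(z) - \widetilde{\StielEmp}^{(k)}(z)\bigr| \le \frac{2}{m|z|}.
\]
Using the standard identity $D_k = \Expt\bigl[\StielEmp(z) - \widetilde{\StielEmp}^{(k)}(z) \,\big|\, \m{F}_k\bigr]$, which is valid because $\widetilde{\StielEmp}^{(k)}$ has the same $\m{F}_{k-1}$-conditional distribution as $\StielEmp$, I conclude $|D_k| \le 2/(m|z|)$ almost surely.

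Azuma-Hoeffding applied to $\sum_{k=1}^m D_k$ with this uniform increment bound then yields
\[
\Pr\bigl(|\StielEmp(z) - \Expt\StielEmp(z)| \ge t\bigr) \le 2\exp\!\left(-\frac{t^2}{2 m \cdot (2/(m|z|))^2}\right) = 2\exp\!\left(-\frac{m z^2 t^2}{8}\right),
\]
establishing the bound with $c = 1/8$. The only nontrivial technical step is the rank-$r$ perturbation inequality for the trace of the resolvent, but it is classical and follows immediately from eigenvalue interlacing; everything else is routine bookkeeping.
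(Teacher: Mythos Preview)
Your proof is correct and follows essentially the same martingale/bounded-differences scheme as the paper. The only cosmetic difference is that the paper works with the $d\times d$ companion matrix $H^{1/2}S^\T S H^{1/2}$ and bounds the increment via the explicit Sherman--Morrison leave-one-out formula (a rank-one update), whereas you work directly with $SHS^\T$, replace a row by an independent copy (a rank-two update), and invoke the interlacing-based resolvent perturbation bound; the paper itself remarks that its Sherman--Morrison computation is essentially a proof of that same rank-perturbation lemma.
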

We provide a self-contained proof for completeness, see Section~\ref{sec:proof-lem:StielConc}. 

Next, we show that $\Expt\StielEmp(z)$ approximately satisfies \eqref{eq:MP-Eq}, with explicit non-asymptotic error bounds. Note that \eqref{eq:MP-Eq} can be be equivalently rewritten (assuming $z\ne 0$) as
\begin{equation}
    \Stiel(z) = - \frac{1}{z} \left( 1 - \frac{1}{m}\EffDim(1/\Stiel(z))
    \right) \,.
\end{equation}
\begin{theorem}\label{thm:Stiel-NonAsymp-Gaussian}
	Suppose that $S$ has i.i.d. Gaussian entries. 
	For any $z<0$, $\Expt\StielEmp(z)$ satisfies:
	\begin{align}\label{eq:thm:Stiel-NonAsymp-Gaussian}
		\Expt\StielEmp(z) 
		&= -\frac{1}{z} \left( 1 - \frac1m \EffDim({1/\Expt\StielEmp(z)}) \right) + e_m(z; H)\,,
	\end{align}
	where the error term $e_m(z;H)$ satisfies
	\begin{align}\label{eq:thm:Stiel-NonAsymp-Gaussian-Error}
		e_m(z;H) 
		&\lesssim 
		\frac{\beta}{\alpha} \frac{1}{|z|\sqrt{m}} \,,
	\end{align}
	with $\alpha=\alpha_m(z;H),\beta=\beta_m(z;H)$ defined as
	\begin{align}
		\alpha &:= |z|\Expt\StielEmp(z)\,,\\
		\beta &:= \frac1m \EffDim({1/\Expt\StielEmp(z)}) \,.
	\end{align}
\end{theorem}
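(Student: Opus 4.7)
The plan is to combine Gaussian integration by parts (Stein's lemma) with the concentration of $\StielEmp(z)$ provided by Lemma~\ref{lem:StielConc}. Set $Q := SHS^\T - zI \succ 0$, $A := \Expt[S^\T Q^{-1}S]$, and $\tilde{\Stiel} := \Expt\StielEmp(z)$. Note that $A$ commutes with $H$ by the orthogonal invariance of the Gaussian law of $S$. First, I would apply Stein's identity to each entry $S_{jl}\sim\m{N}(0,1/m)$ and simplify using $\partial_{S_{jl}} Q^{-1} = -Q^{-1}(\partial_{S_{jl}}Q)Q^{-1}$; a routine calculation yields the matrix Dyson-type equation
\begin{equation*}
    \Expt\bigl[(I+\StielEmp(z)\, H)\,S^\T Q^{-1}S\bigr] \;=\; \tilde{\Stiel}\,I \;-\; \tfrac{1}{m}\,\Expt[H S^\T Q^{-2} S].
\end{equation*}
Decomposing $\Expt[\StielEmp H S^\T Q^{-1}S] = \tilde{\Stiel}\,HA + E_1$, with $E_1 := \Expt[(\StielEmp-\tilde{\Stiel})HS^\T Q^{-1}S]$ and $E_2 := \Expt[HS^\T Q^{-2}S]$, rewrites this as $(I+\tilde{\Stiel}H)A = \tilde{\Stiel} I - E_1 - \tfrac{1}{m}E_2$.

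Next, I would reduce to a scalar identity by left-multiplying by $H(I+\tilde{\Stiel}H)^{-1}$ and taking trace. The leading left-hand side is $\tr(HA) = \Expt[\tr(SHS^\T Q^{-1})] = \Expt[\tr(I+zQ^{-1})] = m(1+z\tilde{\Stiel})$, while the leading right-hand side is $\tilde{\Stiel}\tr[H(I+\tilde{\Stiel}H)^{-1}] = \EffDim(1/\tilde{\Stiel})$. Dividing by $-mz = m|z|$ and rearranging recovers \eqref{eq:thm:Stiel-NonAsymp-Gaussian} with explicit error
\begin{equation*}
    e_m(z;H) \;=\; \frac{1}{m|z|}\,\tr\!\bigl[H(I+\tilde{\Stiel}H)^{-1}\bigl(E_1 + \tfrac{1}{m}E_2\bigr)\bigr].
\end{equation*}

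It remains to bound the two pieces of $e_m$. For the $E_2$ term, I would use $Q^{-2}\preceq |z|^{-1}Q^{-1}$ together with the PSD bound $HA\preceq I$, the latter following from taking expectation in the push-through identity $H^{1/2}S^\T Q^{-1} S H^{1/2} = I - |z|(H^{1/2}S^\T S H^{1/2} + |z|I)^{-1}$ and invoking commutativity of $A$ with $H$. This bounds the $E_2$ contribution by $\lesssim \EffDim(1/\tilde{\Stiel})/(m^2|z|^2\tilde{\Stiel}) = \beta/(\alpha m|z|)$, a factor $1/\sqrt{m}$ smaller than the target. For $E_1$, observe
\begin{equation*}
    \tr[H(I+\tilde{\Stiel}H)^{-1}E_1] = \Cov\bigl(\StielEmp(z),\,Y\bigr)\,,\quad Y := \tr[P S^\T Q^{-1}S],\quad P := H^2(I+\tilde{\Stiel}H)^{-1},
\end{equation*}
and apply Cauchy-Schwarz. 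Lemma~\ref{lem:StielConc} supplies $\sqrt{\Var\StielEmp(z)}\lesssim (|z|\sqrt{m})^{-1}$, so the remaining ingredient is a sharp variance estimate of the form $\sqrt{\Var Y} \lesssim \tr[H(I+\tilde{\Stiel}H)^{-1}] = \EffDim(1/\tilde{\Stiel})/\tilde{\Stiel}$. I would obtain this via a leave-one-row-out / Efron--Stein decomposition, exploiting the fact that resampling a single row of $S$ perturbs $Q^{-1}SPS^\T$ only at rank two.

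The main technical obstacle is this sharp variance bound on $Y$. A naive Gaussian Poincar\'e argument yields $\Var Y \lesssim \tr(P^2)/m$, which scales with cruder spectral quantities such as $\tr H$ or $\|H\|$ and would spoil the claimed dimension-free and condition-number-free estimate. The fix is to exploit the specific structure of $P = H^2(I+\tilde{\Stiel}H)^{-1}$ in combination with the operator inequality $HA\preceq I$: one factor of $H$ in $P$ is absorbed against the resolvent of $H$, leaving the eigenvalue profile $h/(1+\tilde{\Stiel}h)$ whose sum is exactly $\EffDim(1/\tilde{\Stiel})/\tilde{\Stiel}$, producing the desired estimate.
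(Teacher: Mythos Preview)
Your approach is correct but takes a genuinely different route from the paper. The paper works entry-by-entry in the eigenbasis of $H$: writing $H=\sum_\ell \tau_\ell v_\ell v_\ell^\T$ and $s_\ell=Sv_\ell$ (i.i.d.\ Gaussian by orthogonal invariance), it expresses $\StielEmp(z)$ as a sum over diagonal resolvent entries $v_\ell^\T(H^{1/2}S^\T SH^{1/2}-zI)^{-1}v_\ell$, each of which has an explicit Schur-complement form $-\tfrac{1}{z}(1+\tau_\ell s_\ell^\T T_{\setminus\ell}s_\ell)^{-1}$ with $T_{\setminus\ell}=(SH_{\setminus\ell}S^\T-zI)^{-1}$. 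Linearizing each term around $\Expt\StielEmp(z)$ and splitting the fluctuation $D_\ell=s_\ell^\T T_{\setminus\ell}s_\ell-\Expt\StielEmp(z)$ into three pieces (Hanson--Wright, rank-one resolvent perturbation, and Lemma~\ref{lem:StielConc}) yields the error bound directly; the factor $\beta$ appears naturally as the prefactor $\tfrac{1}{m}\sum_\ell\tfrac{\tau_\ell}{1+\tau_\ell\Expt\StielEmp(z)}$ in front of $\max_\ell\Expt|D_\ell|$. Your Stein/Dyson-equation route is the Pastur-style counterpart to this Bai--Silverstein-style computation: more ``global'' (matrix equation first, scalar contraction second), but then the $\beta$ factor has to be recovered a posteriori via the variance bound on $Y$.

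On that point: your plan to extract $\sqrt{\Var Y}\lesssim \EffDim(1/\tilde\Stiel)/\tilde\Stiel$ via Efron--Stein plus $HA\preceq I$ is more elaborate than necessary, and the sketch you give does not quite pin down how those two ingredients combine. In fact the needed bound is an almost-sure inequality and falls out immediately once you pass to the $H$-eigenbasis (as the paper does): writing $Y=\sum_\ell p_\ell\, s_\ell^\T Q^{-1}s_\ell$ with $p_\ell=\tau_\ell^2/(1+\tilde\Stiel\tau_\ell)$, Sherman--Morrison gives $s_\ell^\T Q^{-1}s_\ell = X_\ell/(1+\tau_\ell X_\ell)\le 1/\tau_\ell$ for $X_\ell=s_\ell^\T T_{\setminus\ell}s_\ell\ge 0$, so that $0\le Y\le \sum_\ell \tau_\ell/(1+\tilde\Stiel\tau_\ell)=\EffDim(1/\tilde\Stiel)/\tilde\Stiel$ deterministically. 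This both simplifies your argument and shows that, in the end, the two proofs hinge on the same eigenbasis identity.
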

The proof of Theorem~\ref{thm:Stiel-NonAsymp-Gaussian}, given in Section~\ref{sec:proof-thm:Stiel-NonAsymp-Gaussian}, follows along a well-known computation technique in random matrix theory, and explicitly leverages the orthogonal invariance of the Gaussian distribution. Note that the error terms $\alpha,\beta$ depend on $\Expt\StielEmp(z)$ itself. This feature, which somewhat complicates the proofs of Theorems~\ref{thm:Alg1-Gauss}-\ref{thm:Alg2-Gauss}, is generally not a mere artifact of the calculation, see e.g. the bounds of \citep{knowles2017anisotropic} under asymptotic conditions.

\newpage

\section{Proof of Theorem~\ref{thm:Alg1-Gauss}}

Denote for brevity, per the description of Algorithm~\ref{Alg:1},
\begin{equation}
	z_0 := -\frac{5}{12}\lambda \,.
\end{equation}
The proof of Theorem~\ref{thm:Alg1-Gauss} consists of two parts. First, we show that if $m<1.5\EffDim(\lambda)$, then with high probability (w.h.p.) $\StielEmp_m(z_0) < 1/\lambda$, hence such $m$ would not be returned. Then, we show that when $m\ge 2\EffDim(\lambda)$, then w.h.p. $\StielEmp_m(z_0)\ge 1/\lambda$, which would guarantee that the algorithm must return such $m$ (if it has not halted before). Consequently, the output of the algorithm must satisfy $m\le 4\EffDim(\lambda)$. 

The next lemma shows that Algorithm~\ref{Alg:1} does not return $m$ which is \emph{much} smaller than $\EffDim(\lambda)$.
\begin{lemma}\label{lem:Small-n}
	Suppose that $S$ has i.i.d. Gaussian entries. There is a (small) constant $c_0$ such that if $m<c_0 \EffDim(\lambda)$ then
	\begin{align*}
		\Pr(\StielEmp(z_0) >  1/\lambda) \le Ce^{-cm} \,.
	\end{align*} 
\end{lemma}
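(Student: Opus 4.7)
The plan is to upper bound $\StielEmp(z_0)$ deterministically in terms of $\lambda_{\min}(S H S^\T)$, and then invoke Theorem~\ref{thm:Smallest-Eigenvalue-Gauss} to show that when $m$ is sufficiently smaller than $\EffDim(\lambda)$, this minimum eigenvalue is large enough to force $\StielEmp(z_0) \le 1/\lambda$ with overwhelming probability.

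Concretely, I would start from the spectral representation
\[
	\StielEmp(z_0) = \frac{1}{m}\sum_{i=1}^m \frac{1}{\lambda_i(SHS^\T) + 5\lambda/12},
\]
and note the crude but sharp-enough bound that each summand is maximized when $\lambda_i = \lambda_{\min}(SHS^\T)$, so
\[
	\StielEmp(z_0) \;\le\; \frac{1}{\lambda_{\min}(SHS^\T) + 5\lambda/12}.
\]
This reduces the problem entirely to a lower bound on $\lambda_{\min}(SHS^\T)$.

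Next I would apply Theorem~\ref{thm:Smallest-Eigenvalue-Gauss} with $\eta := \lambda$. Provided we take $c_0 \le c_1$, the hypothesis $m \le c_0 \EffDim(\lambda)$ puts us inside the regime where that theorem applies, yielding with probability at least $1 - C e^{-cm}$ the lower bound
\[
	\lambda_{\min}(SHS^\T) \;\ge\; c_2\,\lambda \,\frac{\EffDim(\lambda)}{m}.
\]
Combining this with the previous display gives, on the same high-probability event,
\[
	\StielEmp(z_0) \;\le\; \frac{1}{\frac{5\lambda}{12} + c_2\lambda\frac{\EffDim(\lambda)}{m}} \;=\; \frac{1}{\lambda}\cdot\frac{1}{\frac{5}{12} + c_2\,\EffDim(\lambda)/m}.
\]

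Finally, the only remaining step is to choose $c_0$ small enough so that the right-hand side is at most $1/\lambda$. This is equivalent to requiring $\frac{5}{12} + c_2\,\EffDim(\lambda)/m \ge 1$, i.e., $m/\EffDim(\lambda) \le 12c_2/7$. Hence setting $c_0 := \min\{c_1,\,12c_2/7\}$ (or any strictly smaller positive constant to make the inequality strict) closes the argument, with the failure probability coming entirely from Theorem~\ref{thm:Smallest-Eigenvalue-Gauss} and therefore of the claimed form $C e^{-cm}$. The only substantive input is the quantitative smallest-eigenvalue bound, which is already established; there is no real obstacle beyond bookkeeping of the constants.
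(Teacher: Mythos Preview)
Your proposal is correct and essentially identical to the paper's proof: both bound $\StielEmp(z_0)\le (\lambda_{\min}(SHS^\T)+5\lambda/12)^{-1}$, invoke Theorem~\ref{thm:Smallest-Eigenvalue-Gauss} with $\eta=\lambda$, and close with the same choice $c_0=\min\{c_1,12c_2/7\}$.
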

\begin{proof}
	The lemma is a consequence of Theorem~\ref{thm:Smallest-Eigenvalue-Gauss}, which bounds the smallest eigenvalue of $S HS^\T$. 
	Set $c_0=\min\{c_1,12c_2/7\}$ for $c_1,c_2$ from Theorem~\ref{thm:Smallest-Eigenvalue-Gauss}. Assume that $m<c_0\EffDim(\lambda)$. Since in particular $m<c_1\EffDim(\lambda)$, the following event holds w.p. $1-Ce^{-cm}$:
	\[
	\StielEmp(z_0) \le \frac{1}{\lambda_{\min}(SHS^\T)-z_0} \le  \frac{1}{c_2\frac{1}{m}\EffDim(\lambda) \cdot \lambda  + \frac{5}{12}\lambda}\,.
	\]
	Moreover, since $c_2/c_0\ge 7/12$, 
	\[
	c_2\frac{1}{m}\EffDim(\lambda)>c_2 \frac{1}{c_0}\ge  \frac{7}{12} \,.
	\]
	Hence, under this event, $\StielEmp(z_0)<1/\lambda$.
\end{proof}
Next, we treat the regime of moderate $m$, namely when $c_0\EffDim(\lambda) \le m < 1.5\EffDim(\lambda)$. The main tool here is the non-asymptotic estimate of Theorem~\ref{thm:Stiel-NonAsymp-Gaussian}. The reason we have to assume $m\gtrsim \EffDim(\lambda)$ is that only then we can guarantee the error term is controlled.

The following simple lemma is useful.
\begin{lemma}\label{lem:DimEff-Contract}
	For any $\mu>0$ and $\gamma\ge 1$, 
	\begin{equation}
		\EffDim(\gamma \mu) \le \EffDim(\mu) \le \gamma \EffDim(\gamma \mu) \,.
	\end{equation}
\end{lemma}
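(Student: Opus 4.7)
The plan is to reduce the statement to a scalar inequality using the spectral decomposition of $H$. Writing $h_1,\ldots,h_d\ge 0$ for the eigenvalues of $H$, we have
\begin{equation}
\EffDim(\lambda) = \tr\bigl(H(H+\lambda I)^{-1}\bigr) = \sum_{i=1}^d \frac{h_i}{h_i+\lambda},
\end{equation}
so both desired inequalities follow by establishing the analogous bounds term-by-term for the scalar function $\varphi_h(\lambda):=\frac{h}{h+\lambda}$ with $h\ge 0$.

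For the left inequality $\EffDim(\gamma\mu)\le \EffDim(\mu)$, I would simply note that $\gamma\ge 1$ implies $h+\gamma\mu\ge h+\mu$, so $\varphi_h(\gamma\mu)\le \varphi_h(\mu)$ for each $i$; summing gives the bound. (This is just monotonicity of $\lambda\mapsto \EffDim(\lambda)$ already mentioned informally earlier in the excerpt.)

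For the right inequality $\EffDim(\mu)\le \gamma \EffDim(\gamma\mu)$, I would show $\varphi_h(\mu)\le \gamma\varphi_h(\gamma\mu)$ for each eigenvalue. If $h=0$ both sides vanish; otherwise the inequality is equivalent, after clearing denominators, to $h+\gamma\mu \le \gamma(h+\mu)$, i.e.\ $(\gamma-1)h\ge 0$, which holds since $\gamma\ge 1$ and $h\ge 0$. Summing over $i$ gives the claim.

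No real obstacle is expected here — the lemma is a routine monotonicity/homogeneity fact about the effective dimension, and the whole proof is a couple of lines once one passes to the eigenvalue sum. The only thing to be a little careful about is treating the $h_i=0$ case separately (or equivalently, noting that the term-by-term inequality holds even when some eigenvalues vanish), but this causes no actual difficulty.
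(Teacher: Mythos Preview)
Your proof is correct. The paper's argument is essentially the same idea presented at the matrix level rather than eigenvalue-by-eigenvalue: it writes $\EffDim(\gamma\mu)=\frac{1}{\gamma}\tr\bigl(H(\tfrac{1}{\gamma}H+\mu I)^{-1}\bigr)$ and uses the PSD ordering $\tfrac{1}{\gamma}H+\mu I\preceq H+\mu I$ to conclude, which after diagonalization is exactly your scalar inequality $(\gamma-1)h\ge 0$.
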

\begin{proof}
	The inequality $\EffDim(\gamma\mu)\le \EffDim(\mu)$ is trivial. As for the other one, 
	\begin{align*}
		\DimEff{\gamma\mu} = \tr(H(H+\gamma \mu I)^{-1}) = \frac{1}{\gamma} \tr(H(\frac{1}{\gamma}H+ \mu I)^{-1}) \overset{(\star)}{\ge} \frac{1}{\gamma} \tr(H(H+ \mu I)^{-1}) = \frac1\gamma \DimEff{\mu}\,,
	\end{align*}
	where $(\star)$ follows because $H\succeq 0$, hence $\frac{1}{\gamma}H +  \mu I \preceq H+ \mu I$ and therefore $H(\frac{1}{\gamma}H+ \mu I)^{-1}\succeq H(H+ \mu I)^{-1}$.
\end{proof}
\begin{lemma}\label{lem:Moderate-n}
	
	Suppose that $S$ has i.i.d. Gaussian entries. Moreover, suppose that $c_0\DimEff{\lambda} \le m < 1.5\DimEff{\lambda}$. 
	There is a numerical constant $M$ such that if moreover $m\ge M$ then
	\[
	\Expt\StielEmp(z_0) \le \frac{14}{15} \cdot \frac{1}{\lambda} \,.
	\]
	In particular, $\Pr(\StielEmp(z_0)>1/\lambda) \le Ce^{-cm}$.
\end{lemma}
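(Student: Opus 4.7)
The plan is to invoke Theorem~\ref{thm:Stiel-NonAsymp-Gaussian} at the specific point $z = z_0 = -5\lambda/12$ and rewrite the identity \eqref{eq:thm:Stiel-NonAsymp-Gaussian} as
\[
\bar{s} = \frac{12}{5\lambda}\left(1 - \frac{1}{m}\EffDim(1/\bar{s})\right) + e_m \,,
\]
where $\bar{s} := \Expt\StielEmp(z_0)$. I would then argue by contradiction, assuming $\bar{s} > 14/(15\lambda)$ and deriving a numerical contradiction once $m$ exceeds some absolute constant $M$. The second (high-probability) half of the lemma will follow from the first by a direct appeal to Lemma~\ref{lem:StielConc} with $t = 1/(15\lambda)$: the event $\StielEmp(z_0) > 1/\lambda$ forces $|\StielEmp(z_0) - \bar{s}| > t$, and the resulting exponent $c m z_0^2 t^2$ is proportional to $m$ with a purely numerical constant.

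Under the contradiction assumption, $1/\bar{s} < 15\lambda/14$, so by monotonicity of $\EffDim$ and Lemma~\ref{lem:DimEff-Contract} applied with $\mu = \lambda$, $\gamma = 15/14$, I obtain $\EffDim(1/\bar{s}) \ge \EffDim(15\lambda/14) \ge (14/15)\EffDim(\lambda)$. Combined with the range hypothesis $m < 1.5\EffDim(\lambda)$ this yields $\frac{1}{m}\EffDim(1/\bar{s}) \ge \frac{14}{15} \cdot \frac{2}{3} = \frac{28}{45}$, so the main term in the identity is at most $\frac{12}{5\lambda} \cdot \frac{17}{45} = \frac{204}{225\lambda}$. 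Since $\frac{14}{15} = \frac{210}{225}$, this leaves a fixed slack of $\frac{2}{75\lambda}$ between the assumed lower bound on $\bar{s}$ and the upper bound on the main term, so the contradiction reduces to showing $|e_m| < 2/(75\lambda)$ whenever $m \ge M$.

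Controlling $|e_m| \lesssim \frac{\beta}{\alpha |z_0| \sqrt{m}}$ requires bounding $\alpha = |z_0|\bar{s}$ from below and $\beta = \frac{1}{m}\EffDim(1/\bar{s})$ from above. For the upper bound on $\beta$ I would use the deterministic bound $\bar{s} \le 1/|z_0| = 12/(5\lambda)$, hence $1/\bar{s} \ge 5\lambda/12$; Lemma~\ref{lem:DimEff-Contract} (now with $\gamma = 12/5$) then gives $\EffDim(1/\bar{s}) \le (12/5)\EffDim(\lambda)$, and the hypothesis $m \ge c_0 \EffDim(\lambda)$ yields $\beta \le 12/(5 c_0)$. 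For the lower bound on $\alpha$ I exploit the contradiction assumption itself: $\alpha \ge |z_0| \cdot 14/(15\lambda) = 7/18$. Together these make $\beta/\alpha$ a purely numerical constant, so $|e_m| \lesssim 1/(\lambda \sqrt{m})$, which drops below $2/(75\lambda)$ for $m$ exceeding some numerical $M$. The main obstacle I anticipate is precisely this self-referential nature of the error bound: because $|e_m|$ depends on $\bar{s}$ through both $\alpha$ and $\beta$, closing the loop requires a careful interplay between the a priori deterministic upper bound on $\bar{s}$ (to tame $\beta$) and the contradiction assumption (to tame $\alpha$), mediated by Lemma~\ref{lem:DimEff-Contract} to tie $\EffDim$ evaluated at different arguments back to $\EffDim(\lambda)$, the quantity governing both sides of the hypothesized range $c_0 \EffDim(\lambda) \le m < 1.5 \EffDim(\lambda)$.
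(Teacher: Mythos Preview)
Your proposal is correct and follows essentially the same approach as the paper: both invoke Theorem~\ref{thm:Stiel-NonAsymp-Gaussian} at $z_0$, use Lemma~\ref{lem:DimEff-Contract} together with the bounds $c_0\EffDim(\lambda)\le m<1.5\EffDim(\lambda)$ to control $\alpha$ and $\beta$, and close via Lemma~\ref{lem:StielConc}. Your explicit contradiction with the fixed threshold $14/15$ is slightly more direct than the paper's parametrization by a generic $\eta$, but the logic and ingredients are the same.
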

\begin{proof}
	Note that the second part follows directly from the first part and Lemma~\ref{lem:StielConc}; we therefore focus on bounding $\Expt\StielEmp(z_0)$. 
	Suppose it were the case that $\Expt\StielEmp_m(z_0)\ge \eta \frac{1}{\lambda}$ where $\eta\le 1$ is some constant. 	
	Apply Theorem~\ref{thm:Stiel-NonAsymp-Gaussian} with $z=z_0$. 
	Let us bound the parameters $\alpha,\beta$ used for error control. First,
	\begin{align}
		\alpha := |z_0|\Expt\StielEmp(z_0) \ge \frac{5}{12}\lambda\cdot \eta\frac{1}{\lambda} \ge \frac{5}{12}\eta\,.
	\end{align}
	Next, since $\Expt\StielEmp(z_0)\le \frac{1}{|z_0|}=\frac{12}{5}\frac{1}{\lambda}$ we have $1/\Expt\StielEmp(z_0) \ge \frac{5}{12}\lambda$,
    so
	\begin{align}
		\beta := \frac1m \DimEff{1/\Expt\StielEmp(z_0)} \le \frac1m \DimEff{\frac{5}{12}\lambda} \overset{(\star)}{\le} \frac{12}{5}\frac1m\DimEff{\lambda} \overset{(\star\star)}{\le} \frac{12}{5}\frac{1}{c_0}\,,
	\end{align}
	where $(\star)$ follows from Lemma~\ref{lem:DimEff-Contract} and $(\star\star)$ from the assumption $m\ge c_0\EffDim(\lambda)$. In particular, the error $e_{m}(z_0,H)$ in \eqref{eq:thm:Stiel-NonAsymp-Gaussian-Error} can be bound as $e_m(z_0,H)=\BigOh_{\eta}(\frac{1}{\lambda\sqrt{m}})$. Furthermore, as $\Expt\StielEmp(z_0)\ge \eta \frac{1}{\lambda}$ by assumption,
	\begin{equation}
		\beta \ge \frac1m \DimEff{\frac{1}{\eta}\lambda} \ge \eta \frac1m\DimEff{\lambda}> \frac{2}{3}\eta \,,
	\end{equation}
	where we used Lemma~\ref{lem:DimEff-Contract} and the assumption $m<1.5\DimEff{\lambda}$. With these estimates in mind, consider \eqref{eq:thm:Stiel-NonAsymp-Gaussian},
	\begin{align*}
		\Expt\Stiel(z_0) = -\frac{1}{z_0}(1-\beta-z_0\eps_m(z_0,H)) = \frac{1}{\lambda}\cdot \frac{12}{5}(1-\beta - z_0 e_m(z_0,H)) \le  \frac{1}{\lambda}\cdot \frac{12}{5}(1-\frac23\eta + \BigOh_\eta(\frac{1}{\sqrt{m}})) \,.
	\end{align*}
	As $\Expt\StielEmp_m(z_0)\ge \eta \frac{1}{\lambda}$, we deduce from the above
	\begin{align*}
		\eta \le \frac{12}{5}(1-\frac23\eta + \BigOh_\eta(\frac{1}{\sqrt{m}}))\,,
	\end{align*}
	so that rearranging,
	\begin{align*}
		\eta \le \frac{12}{13}(1 + \BigOh_\eta(\frac{1}{\sqrt{m}}))\,.
	\end{align*}
	By requiring that $m\ge C_\eta $ for large enough $C_\eta$, we can ensure that the error term is as small a constant as we like, so that necessarily (for example) $\eta\le \frac{14}{15}$. 
	
\end{proof}

Next, we show that if the algorithm has reached $m\ge 2\DimEff{\lambda}$, then it will halt in that iteration. We start with two helper lemmas.
\begin{lemma}\label{lem:Stil-Expt-Convexity}
	Suppose that $\Expt[S^\T S]=I$. Then for all $z<0$,
	\begin{equation}\label{eq:lem:Stil-Expt-Convexity}
		\Expt\StielEmp(z) \ge -\frac1z\left(1-\frac1m \DimEff{-z}\right) \,.
	\end{equation}
	(Note that since $\Expt\StielEmp(z)\le -1/z$, we have $\DimEff{-z}\ge\DimEff{1/\Expt\StielEmp(z)}$, so that \eqref{eq:lem:Stil-Expt-Convexity} is perfectly consistent with \eqref{eq:thm:Stiel-NonAsymp-Gaussian}.)
\end{lemma}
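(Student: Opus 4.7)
The plan is to use the push-through trace identity and apply Jensen's inequality via the convexity of $A\mapsto \tr((A+tI)^{-1})$ on the positive semidefinite cone. Write $t=-z>0$, so $\StielEmp(z)=\frac{1}{m}\tr(SHS^\T+tI)^{-1}$.

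First, I would swap $SHS^\T$ for its companion sample covariance. Since $SHS^\T \in \RR^{m\times m}$ and $H^{1/2}S^\T S H^{1/2} \in \RR^{d\times d}$ share the same non-zero eigenvalues, comparing the two trace expressions gives the elementary identity
\begin{equation*}
\tr(SHS^\T + tI)^{-1} = \tr(H^{1/2}S^\T SH^{1/2}+tI)^{-1} + \frac{m-d}{t}\,.
\end{equation*}
Dividing by $m$ and taking expectation,
\begin{equation*}
\Expt\StielEmp(z) = \frac{1}{m}\Expt\tr(H^{1/2}S^\T SH^{1/2}+tI)^{-1} + \frac{1}{t}\Bigl(1-\frac{d}{m}\Bigr)\,.
\end{equation*}

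Second, I would invoke the operator-convexity of $x\mapsto 1/(x+t)$ on $[0,\infty)$ (for $t>0$), which implies that $A\mapsto \tr((A+tI)^{-1})$ is a convex function on the PSD cone. Applying Jensen's inequality to the random PSD matrix $H^{1/2}S^\T SH^{1/2}$ and using the hypothesis $\Expt[S^\T S]=I$ yields
\begin{equation*}
\Expt\tr(H^{1/2}S^\T SH^{1/2}+tI)^{-1} \ge \tr(H^{1/2}\Expt[S^\T S]H^{1/2}+tI)^{-1} = \tr(H+tI)^{-1}\,.
\end{equation*}

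Third, I would reconcile with the stated right-hand side by unfolding $\DimEff{t}=\tr(H(H+tI)^{-1})=d-t\tr((H+tI)^{-1})$, whence
\begin{equation*}
-\frac{1}{z}\Bigl(1-\frac{1}{m}\DimEff{-z}\Bigr) = \frac{1}{t}\Bigl(1-\frac{d}{m}\Bigr) + \frac{1}{m}\tr(H+tI)^{-1}\,.
\end{equation*}
Combining with the two preceding displays gives the claim.

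The only step that is not a mechanical rewrite is the convexity/Jensen step; since $x\mapsto (x+t)^{-1}$ is operator-convex on $[0,\infty)$ for $t>0$, the trace functional $A\mapsto \tr((A+tI)^{-1})$ inherits convexity on the PSD cone, and this is the one point where the hypothesis $\Expt[S^\T S]=I$ is used. I do not anticipate a genuine obstacle.
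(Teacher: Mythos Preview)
Your proof is correct and follows essentially the same route as the paper: pass from $SHS^\T$ to the companion $H^{1/2}S^\T S H^{1/2}$ via the shared nonzero spectrum, apply Jensen's inequality using convexity of $A\mapsto \tr((A+tI)^{-1})$ together with $\Expt[S^\T S]=I$, and then algebraically unwind to recover the right-hand side. The only cosmetic difference is your substitution $t=-z$ and your explicit appeal to operator convexity of $x\mapsto (x+t)^{-1}$ to justify the Jensen step, which the paper simply asserts.
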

\begin{proof}
	Recall that the matrices $SHS^\T\in\RR^{m\times m}$ and $H^{1/2}S^\T S H^{1/2}$ have the same non-zero eigenvalues, and therefore
	\[
	\Expt \StielEmp(z) := \frac1m\Expt\tr(SHS^\T-zI)^{-1} = \frac1m\Expt\tr(H^{1/2}S^\T S H^{1/2}-zI)^{-1} + \frac{1}{m}(d-m)\frac1z \,.
	\]
	Note that since $z<0$, the matrix function $A\mapsto \tr(A-zI)^{-1}$ is convex on the cone of PSD matrices $A\succeq 0$. Thus, by Jensen's inequality,
	\[
	\frac1m\Expt\tr(H^{1/2}S^\T S H^{1/2}-zI)^{-1} \ge \frac1m\tr(\Expt[H^{1/2}S^\T S H^{1/2}]-zI)^{-1} = \frac1m\tr(H-zI)^{-1}\,,
	\]
	so that 
	\[
	\Expt \StielEmp(z) \ge \frac1m\tr(H-zI)^{-1} +\frac1m (d-m)\frac1z = \frac1m\tr[(H-zI)^{-1}+\frac1z I]-\frac1z = -\frac1z\left( 1-\frac1m \DimEff{-z} \right) \,.
	\]
\end{proof}
The next lemma formally establishes that the inverse function of $\Stiel(z)$ is increasing on an appropriate interval. As in \eqref{eq:Psi-Def}, denote $\Psi : (0,\infty) \to \RR$, 
\begin{equation}\label{eq:Stiel-Inv-Def}
	\Psi(s) = -\frac{1}{s}(1-\frac1m \DimEff{1/s}) \,,
\end{equation}
so that for all $z<0$, $\Psi(\Stiel(z))=z$. Note however that in general, $\Range(\Stiel)$ may only be a proper subset of $(0,\infty)$; nonetheless, \eqref{eq:Stiel-Inv-Def} is well-defined. 
\begin{lemma}\label{lem:Stiel-Inv-Monotonicity}
	Let $\mu>0$, and suppose that $m\ge \DimEff{\mu}$. Then $\StielInv$ is increasing on $(0,1/\mu)$. 
\end{lemma}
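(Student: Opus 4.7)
The plan is to differentiate $\Psi$ explicitly in terms of the eigenvalues of $H$, and reduce monotonicity to a clean inequality that follows from $x^2 \le x$ on $[0,1]$ and the monotonicity of $\DimEff{\cdot}$. Writing $h_1,\ldots,h_d$ for the eigenvalues of $H$, the definition of $\Psi$ in \eqref{eq:Stiel-Inv-Def} rearranges to
\begin{equation*}
\Psi(s) = -\frac{1}{s} + \frac{1}{m}\sum_{i=1}^d \frac{h_i}{s h_i + 1}\,,
\end{equation*}
since $\DimEff{1/s} = \tr(H(H+\tfrac{1}{s}I)^{-1}) = s\sum_i \frac{h_i}{sh_i+1}$. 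Differentiating gives
\begin{equation*}
\Psi'(s) = \frac{1}{s^2} - \frac{1}{m}\sum_{i=1}^d \frac{h_i^2}{(sh_i+1)^2}
= \frac{1}{s^2}\left( 1 - \frac{1}{m}\sum_{i=1}^d \left(\frac{s h_i}{s h_i + 1}\right)^2 \right).
\end{equation*}

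So $\Psi'(s) > 0$ is equivalent to $\sum_i \bigl(\frac{s h_i}{s h_i + 1}\bigr)^2 < m$. The next step is to observe that each term satisfies $\frac{sh_i}{sh_i+1}\in[0,1)$ for $s>0$, $h_i\ge 0$, so that $(\frac{sh_i}{sh_i+1})^2 \le \frac{sh_i}{sh_i+1}$. Summing,
\begin{equation*}
\sum_{i=1}^d \left(\frac{s h_i}{s h_i + 1}\right)^2 \le \sum_{i=1}^d \frac{s h_i}{s h_i + 1} = \DimEff{1/s}\,.
\end{equation*}

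Finally, since $\DimEff{\cdot}$ is monotonically non-increasing on $(0,\infty)$, the hypothesis $s < 1/\mu$ gives $1/s > \mu$, and hence $\DimEff{1/s} \le \DimEff{\mu} \le m$ by the assumption on $m$. Combining, $\sum_i(\frac{sh_i}{sh_i+1})^2 \le m$, with strict inequality as soon as at least one $h_i>0$ (which may be assumed, otherwise $\Psi(s)=-1/s$ is trivially increasing). This yields $\Psi'(s)>0$ on $(0,1/\mu)$, completing the proof. There is no real obstacle here; the only subtlety is keeping track of where the inequality is strict, which is handled by the observation that $x\mapsto x^2$ is strictly below $x$ on $(0,1)$.
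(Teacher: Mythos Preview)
Your proof is correct and essentially identical to the paper's own argument: both differentiate $\Psi$, rewrite $\Psi'(s)=\frac{1}{s^2}\bigl(1-\frac{1}{m}\tr(H^2(H+1/s)^{-2})\bigr)$, and then use that each eigenvalue of $H(H+1/s)^{-1}$ lies in $[0,1)$ to bound the squared trace by $\DimEff{1/s}$, after which monotonicity of $\DimEff{\cdot}$ and the hypothesis $m\ge\DimEff{\mu}$ finish the job. The only difference is cosmetic---you work with eigenvalue sums explicitly while the paper keeps the trace/operator notation.
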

\begin{proof}
	Write 
	\[
	\StielInv(s) = -\frac1s + \frac1m \tr(H(I+sH)^{-1})\,,
	\]
	so that its derivative is
	\begin{align}
		\StielInv'(s) 
		&= \frac1{s^2} - \frac{1}{m}\tr(H^2(I+sH)^{-2}) \nonumber \\
		&= \frac{1}{s^2}\left( 1 - \frac1m\tr(H^2(H+1/s)^{-2}) \right) \nonumber	\\
		&\ge \frac{1}{s^2}\left( 1 - \frac1m\tr(H(H+1/s)^{-1}) \right) 
		= \frac{1}{s^2}\left( 1 - \frac1m\DimEff{1/s} \right)\,,
		\label{eq:lem:Stiel-Inv-Monotonicity}
	\end{align}
	where the inequality follows since $0\preceq H(H+1/s)^{-1}\preceq I$. When $s<1/\mu$, we have $1/s>\mu$ so that $\DimEff{1/s}<\DimEff{\mu}$, hence $\StielInv'(s)>0$.	
\end{proof}

We now prove:
\begin{lemma}\label{lem:Large-n}
	Suppose that $S$ has i.i.d. Gaussian entries, and moreover that $m\ge 2\DimEff{\lambda}$. 
	There is a numerical $M$ such that whenever $m\ge M$,
	\[
	\Expt\StielEmp(z_0) \ge \frac{24}{23} \cdot \frac{1}{\lambda} \,.
	\]
	In particular, $\Pr(\StielEmp(z_0)\le 1/\lambda) \le Ce^{-cm}$.
\end{lemma}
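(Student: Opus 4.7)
The plan is to combine Theorem~\ref{thm:Stiel-NonAsymp-Gaussian}, which says $\Expt\StielEmp(z_0)$ approximately solves the Marchenko-Pastur equation, with the strict monotonicity of the inverse map $\Psi$ (Lemma~\ref{lem:Stiel-Inv-Monotonicity}) on an appropriate interval. Concretely, writing $\hat s := \Expt\StielEmp(z_0)$ and $c := 24/(23\lambda)$, I will show that for $m$ larger than some absolute constant $M$, $\Psi(\hat s) \ge \Psi(c)$, and then invoke monotonicity of $\Psi$ to conclude $\hat s \ge c$. The concentration statement then follows immediately from Lemma~\ref{lem:StielConc}, since $c - 1/\lambda = 1/(23\lambda)$ is a positive constant gap.

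The first step is a rough preliminary lower bound on $\hat s$. For each realization of $S$, $\StielEmp(z)$ is increasing in $z$ on $z<0$ (since the eigenvalues of $(SHS^\T - zI)^{-1}$ grow as $|z|$ shrinks), so $\hat s(z) := \Expt\StielEmp(z)$ is too. Since $-\lambda < z_0$ and the hypothesis $m \ge 2\EffDim(\lambda)$ gives $\EffDim(\lambda)/m \le 1/2$, Lemma~\ref{lem:Stil-Expt-Convexity} at $z = -\lambda$ yields $\hat s(-\lambda) \ge (1/\lambda)(1 - 1/2) = 1/(2\lambda)$, hence $\hat s \ge 1/(2\lambda)$. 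With this in hand, Theorem~\ref{thm:Stiel-NonAsymp-Gaussian} at $z_0$ rewrites as $\Psi(\hat s) = z_0 - z_0 e_m(z_0;H)/\hat s$, and the error parameters are bounded by $\alpha = |z_0|\hat s \ge (5\lambda/12)\cdot(1/(2\lambda)) = 5/24$ and $\beta = m^{-1}\EffDim(1/\hat s) \le m^{-1}\EffDim(5\lambda/12) \le (12/5)m^{-1}\EffDim(\lambda) \le 6/5$, using $\hat s \le 1/|z_0|$ together with Lemma~\ref{lem:DimEff-Contract}. Hence $|z_0 e_m/\hat s| \lesssim \lambda/\sqrt m$, which can be made $\le \lambda/24$ by taking $m \ge M$ for an absolute constant.

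For the comparison to $c$, note that $1/c = 23\lambda/24$ and by Lemma~\ref{lem:DimEff-Contract} (with $\gamma = 24/23$, $\mu = 23\lambda/24$) we have $\EffDim(23\lambda/24) \le (24/23)\EffDim(\lambda) \le (12/23)m$. Therefore
\[
\Psi(c) = -c^{-1}\bigl(1 - m^{-1}\EffDim(1/c)\bigr) \le -\tfrac{23\lambda}{24}\cdot \tfrac{11}{23} = -\tfrac{11\lambda}{24}.
\]
On the other hand, by the previous paragraph, $\Psi(\hat s) \ge z_0 - \lambda/24 = -10\lambda/24 - \lambda/24 = -11\lambda/24 \ge \Psi(c)$. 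Finally, since $\EffDim(23\lambda/24) \le (12/23)m < m$, Lemma~\ref{lem:Stiel-Inv-Monotonicity} applies with $\mu = 23\lambda/24$ and gives that $\Psi$ is strictly increasing on $(0, c)$, hence on $[0, c]$ by continuity; this forces $\hat s \ge c = 24/(23\lambda)$. Plugging the gap $t = \hat s - 1/\lambda \ge 1/(23\lambda)$ into Lemma~\ref{lem:StielConc} yields the desired $\Pr(\StielEmp(z_0) \le 1/\lambda) \le 2\exp(-c' m)$.

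The main obstacle is the circularity baked into the error bound of Theorem~\ref{thm:Stiel-NonAsymp-Gaussian}: both $\alpha$ and $\beta$ are themselves functions of $\hat s$, so the theorem cannot be invoked usefully without a prior nontrivial lower bound on $\hat s$. The trick is that the Jensen-type Lemma~\ref{lem:Stil-Expt-Convexity} becomes nontrivial once $|z|$ is large enough compared to $\EffDim(|z|)/m$, and we exploit monotonicity of $\hat s(\cdot)$ in $z$ to transport this bound from the easy regime $z=-\lambda$ to the harder point $z_0 = -5\lambda/12$. The remaining arithmetic is only a matter of checking that the constants are compatible, i.e.\ that $z_0 + \lambda/24 = -11\lambda/24 = \Psi(c)$ lines up exactly with the upper bound on $\Psi(c)$ derived from the hypothesis $m \ge 2\EffDim(\lambda)$.
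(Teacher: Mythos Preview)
Your proof is correct and follows essentially the same approach as the paper: both use Lemma~\ref{lem:Stil-Expt-Convexity} at $z=-\lambda$ for the preliminary bound $\hat s\ge 1/(2\lambda)$, feed this into Theorem~\ref{thm:Stiel-NonAsymp-Gaussian} to control the error (same $\alpha\ge 5/24$, $\beta\le 6/5$), and finish via the monotonicity of $\Psi$ from Lemma~\ref{lem:Stiel-Inv-Monotonicity}. The only cosmetic difference is that the paper introduces a parameter $\eta$ with $\hat s\le 1/((1-\eta)\lambda)$ and derives $\eta\ge 1/24$, whereas you compare $\Psi(\hat s)$ directly to $\Psi(24/(23\lambda))$; the arithmetic is the same (your $-11\lambda/24$ threshold is exactly the paper's $-\lambda(1/2-\eta)$ at $\eta=1/24$).
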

\begin{proof}
	Suppose that $\Expt\StielEmp(z_0)\le \frac{1}{(1-\eta)}\frac{1}{\lambda}$ for some $\eta\in (0,1)$; we shall show that $\eta$ has to be somewhat large so to not generate a contradiction.
	First, by Lemma~\ref{lem:Stil-Expt-Convexity} (with $z=-\lambda$),
	\begin{equation*}
		\Expt\StielEmp(z_0) \ge \Expt\StielEmp(-\lambda) = \frac{1}{\lambda}(1-\frac1m\DimEff{\lambda}) = \frac12 \cdot \frac{1}{\lambda}\,,
	\end{equation*}
	where the last inequality follows since $m\ge 2\DimEff{\lambda}$. Of course, this lower bound does not suffice for our purposes: we want a bound which is $>\frac1{\lambda}$ (strictly). It does allow us, however, to control the error term in Theorem~\ref{thm:Stiel-NonAsymp-Gaussian} at $z=z_0$ (specifically, to lower bound $\alpha$). We can bound the parameters $\alpha,\beta$ in Theorem~\ref{thm:Stiel-NonAsymp-Gaussian} as
	\begin{align*}
		\alpha &:= |z_0|\Expt\StielEmp(z_0)\ge \frac{5}{12}\lambda\cdot \frac{1}{2}\frac1{\lambda}\ge \frac{5}{24}\,, \\
		\beta &:= \frac1m\DimEff{1/\Expt\StielEmp(z_0)} \le \frac1m\DimEff{-z_0} = \frac1m\DimEff{\frac{5}{12}\lambda} \le \frac{12}{5}\frac1m\DimEff{\lambda} \le \frac{6}{5} \,,
	\end{align*}
	where, in upper bounding $\beta$, we used $\Expt\StielEmp(z_0)\le \frac{1}{-z_0}$ and Lemma~\ref{lem:DimEff-Contract}.
	With these, Theorem~\ref{thm:Stiel-NonAsymp-Gaussian} implies
	\begin{equation*}
		\Expt\StielEmp(z_0) = -\frac{1}{z_0}(1-\frac1m\DimEff{1/\Expt\StielEmp_m(z_0)}) + \BigOh(\frac{1}{\lambda\sqrt{m}}) \,.
	\end{equation*}
	Multiplying by $z_0$ and dividing by $\Expt\StielEmp(z_0)$, 
	\begin{align*}
		z_0 
		&= -\frac{1}{\Expt\StielEmp(z)}\left(1 - \frac1m\DimEff{1/\Expt\StielEmp(z)}\right) + \BigOh(\frac{\lambda}{\sqrt{m}}) \\
		&= \StielInv(\Expt\StielEmp(z))	+ \BigOh(\frac{\lambda}{\sqrt{m}})\,,
	\end{align*}
	where in modifying the error term we used $\Expt\StielEmp(z_0)\gtrsim \frac1\lambda$. 
	
	Note that $m\ge 2\DimEff{\lambda}$ implies $m\ge\DimEff{\frac12\lambda}$ by Lemma~\ref{lem:DimEff-Contract}. By Lemma~\ref{lem:Stiel-Inv-Monotonicity}, this implies that $\StielInv$ is increasing on $(0,2\frac1\lambda)$. Thus, if we further assume that $\eta<1/2$, 
	then $\StielInv$ is increasing on $(0,\frac{1}{1-\eta}\frac1\lambda)$, and so $\StielInv(\Expt\StielEmp(z)) \le \StielInv(\frac1{1-\eta}\frac1\lambda)$. Using this with the previous display, and writing $z_0=-\frac5{12}\lambda$,
	\begin{align*}
		-\frac5{12} \lambda 
		&\le \StielInv(\frac{1}{1-\eta}\frac1\lambda) +  \BigOh(\frac{\lambda}{\sqrt{m}}) \\
		&= -(1-\eta){\lambda} \left( 1 - \frac1m\DimEff{(1-\eta)\lambda} \right) +  \BigOh(\frac{\lambda}{\sqrt{m}})  \\
		&\overset{(\star)}{\le} -(1-\eta){\lambda} \left( 1 - \frac{1}{1-\eta}\frac1m\DimEff{\lambda} \right) +  \BigOh(\frac{\lambda}{\sqrt{m}}) \\
		&\overset{(\star\star)}{\le} -(1-\eta){\lambda} \left( 1 - \frac{1}{1-\eta}\frac12 \right) +  \BigOh(\frac{\lambda}{\sqrt{m}}) \\
		&= -\lambda(\frac12-\eta ) + \BigOh(\frac{\lambda}{\sqrt{m}})\,,
	\end{align*}
	where $(\star)$ uses $\DimEff{(1-\eta)\lambda}\le \frac{1}{1-\eta}\DimEff{\lambda}$ (Lemma~\ref{lem:DimEff-Contract}) and $(\star\star)$ uses the assumption $m\ge 2\DimEff{\lambda}$. Dividing by $(-\lambda)$,
	\begin{align*}
		\frac{5}{12} \ge \frac12-\eta + \BigOh(\frac{1}{\sqrt{m}}) \,.
	\end{align*}
	For large enough $m\ge M$, the $\BigOh(\cdot)$ term is $\le \frac{1}{24}$, therefore $\eta\ge \frac1{24}$. That is, we deduce that necessarily $\Expt\StielEmp(z_0)\ge \frac{1}{1-\frac1{24}}\frac1{\lambda} = \frac{24}{23}\frac{1}{\lambda}$.
\end{proof}

We are ready to conclude the proof of Theorem~\ref{thm:Alg1-Gauss}.

\begin{proof}
(Of Theorem~\ref{thm:Alg1-Gauss}.)
	Let $m_j=2^jm_0$, $j=0,1,\ldots$, be the value of $m$ at iteration $j$ of the algorithm. Set 
	\[
	J^- = \lceil \log_2(1.5\DimEff{\lambda}/m_0)\rceil, \qquad J^+ = \lceil \log_2(2\DimEff{\lambda}/m_0)\rceil\,,
	\]
	chosen so that: 1) 
	$J^-$ is the smallest $j$ such that $m_j\ge 1.5\DimEff{\lambda}$; 2) $2\DimEff{\lambda} \le m_{J^+} < 4\DimEff{\lambda}$. Thus, to establish \eqref{eq:thm:Alg1-Gauss} it suffices to show that the algorithm halts at an iteration $j$ satisfying $J^-\le j\le J^+$. 
	Accordingly, the error probability can be bounded as
	\begin{align*}
		\Pr(\textrm{\eqref{eq:thm:Alg1-Gauss} does not hold} ) \le \sum_{j=0}^{J^- -1}\Pr(\StielEmp_{m_j}(z_0)> 1/\lambda) + \Pr(\StielEmp_{m_{J^+}}(z_0)\le 1/\lambda) \,.
	\end{align*}
	Assume that $m_0\ge M$ is large enough. By Lemmas~\ref{lem:Small-n} and \ref{lem:Moderate-n}, for all $j\le J^--1$, $\Pr(\StielEmp_{m_j}(z_0)> 1/\lambda)\le Ce^{-cm_j}$. By Lemma~\ref{lem:Large-n}, $\Pr(\StielEmp_{m_{J^+}}(z_0)\le 1/\lambda)\le Ce^{-cm_{J^+}}$. Thus,
	\begin{align*}
		\Pr(\textrm{\eqref{eq:thm:Alg1-Gauss} does not hold} ) \le \sum_{j=0}^{J^- -1}Ce^{-c2^j m_0} + Ce^{-c2^{J^+}m_0} \le \sum_{j=0}^{\infty} Ce^{-c2^jm_0} \lesssim e^{-cm_0} \,.
	\end{align*}
	We deduce that if $m_0\gtrsim \log(1/\delta)$, then the error probability is $\le \delta$.
\end{proof}

\newpage

\section{Proof of Theorem~\ref{thm:Alg2-Gauss}}
\label{sec:proof-thm:Alg2}

\begin{proof}[\unskip\nopunct]
	
	Recall the (deterministic) Marchenko-Pastur equation \eqref{eq:MP-Eq}; rearranging yields
	\begin{equation}
		z = -\frac{1}{\Stiel(z)}\left( 1 - \frac1m \DimEff{1/\Stiel(z)} \right)\,,
	\end{equation}
	so that setting $\Stiel(-\tilde{\lambda})=1/\lambda$ yields $\tilde{\lambda}=\lambda(1-\frac1m\DimEff{\lambda})$. Further recall that we do not have access to $\Stiel(z)$, but instead to its random counterpart $\StielEmp(z)$. We know that its expectation $\Expt\StielEmp(z)$ approximately satisfies the Marcheko-Pastur equation with an error term (Theorem~\ref{thm:Stiel-NonAsymp-Gaussian}), and that we have concentration around the expectation (Lemma~\ref{lem:StielConc}). The proof of Theorem~\ref{thm:Alg2-Gauss} down to controlling and propagating these error terms.
	
	\begin{lemma}\label{claim:proof-thm:Alg2-claim1}
		Suppose that $m\ge 1.5\DimEff{\lambda}$ and $m\ge M$ for large enough $M$.
		
		A unique root $z^\star<0$ such that $\Expt\StielEmp(z^\star)=\frac1\lambda$ exists, and $z^\star<-c\lambda$ for small enough $c>0$.
		
		Consider a sufficiently small $\BigOh(\lambda)$-neighborhood of $z^\star$, $I^\star=[z^\star-\eta\lambda,z^\star+\eta\lambda]$ for small enough $\eta>0$. Then,
		\begin{enumerate}
			\item $\Expt\StielEmp(z)\gtrsim \frac{1}{\lambda}$ for all $z\in I^\star$.
			\item $|\Expt\StielEmp(z)-\Expt\StielEmp(z')|\lesssim |z-z'|\frac{1}{\lambda^2}$ for $z,z'\in I^\star$. 
		\end{enumerate}
	\end{lemma}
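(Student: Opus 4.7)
Denote $\phi(z) := \Expt\StielEmp(z)$. The plan is to locate the root $z^\star$ in a neighborhood of $-\tilde{\lambda}$, where $\tilde{\lambda} := \lambda(1 - \tfrac{1}{m}\EffDim(\lambda))$; under the hypothesis $m \ge 1.5\EffDim(\lambda)$, we have $\tilde\lambda \in [\lambda/3, \lambda]$. Continuity and strict monotonicity of $\phi$ on $(-\infty,0)$ follow from differentiating under the expectation, $\phi'(z) = \Expt\bigl[\tfrac{1}{m}\tr((SHS^\T - zI)^{-2})\bigr] > 0$. The trivial upper bound $\phi(z) \le -1/z$, combined with Lemma~\ref{lem:Stil-Expt-Convexity} and monotonicity of $\phi$, gives the crude two-sided bound $\phi(-\tilde\lambda) \in [\tfrac{1}{3\lambda}, \tfrac{1}{\tilde\lambda}]$.

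Next I apply Theorem~\ref{thm:Stiel-NonAsymp-Gaussian} at $z = -\tilde\lambda$. The two-sided bound just noted yields $\alpha \ge 1/9$ and $\beta \le \tfrac{\lambda}{\tilde\lambda}\cdot \tfrac{\EffDim(\lambda)}{m} \le 2$ (using Lemma~\ref{lem:DimEff-Contract} and $\tilde\lambda \ge \lambda/3$), so the error term is $|e_m| = O(1/(\lambda\sqrt m))$. Define the auxiliary map $g(s) := \tfrac{1}{\tilde\lambda}\bigl(1 - \tfrac{1}{m}\EffDim(1/s)\bigr)$; by the definition of $\tilde\lambda$, $g(1/\lambda) = 1/\lambda$, so $1/\lambda$ is a fixed point of $g$. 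The theorem states $\phi(-\tilde\lambda) = g(\phi(-\tilde\lambda)) + e_m$. Writing $\phi(-\tilde\lambda) = 1/\lambda + \delta$ and applying the mean value theorem yields $(1 - g'(\xi))\delta = e_m$ for some $\xi$ between $1/\lambda$ and $1/\lambda+\delta$. Since $g'(s) < 0$ (as $\EffDim$ is decreasing), we have $|1 - g'(\xi)| > 1$, and therefore $|\delta| \le |e_m| = O(1/(\lambda\sqrt m))$. This is the key step, and the main obstacle is precisely the self-referential nature of $e_m$ in Theorem~\ref{thm:Stiel-NonAsymp-Gaussian} (whose parameters $\alpha,\beta$ depend on $\phi$ itself), which I handle by first deriving crude a priori bounds via Lemma~\ref{lem:Stil-Expt-Convexity} and then bootstrapping through the perturbation.

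To translate the estimate at $-\tilde\lambda$ into a bound on $z^\star$, I need a lower bound on $\phi'$ near $-\tilde\lambda$. By Cauchy--Schwarz applied to the sum defining $\StielEmp(z)$ and Jensen's inequality for the expectation, $\phi'(z) \ge \phi(z)^2 \gtrsim 1/\lambda^2$ in a neighborhood of $-\tilde\lambda$. Combining with $\phi(z^\star) = 1/\lambda$ and the mean value theorem gives $|z^\star + \tilde\lambda| = O(\lambda/\sqrt m)$; for $m \ge M$ large enough this correction is at most $\lambda/12$, yielding $z^\star \le -\tilde\lambda + \lambda/12 \le -\lambda/4$, and uniqueness is immediate from strict monotonicity of $\phi$. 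The local properties on $I^\star$ now follow from the upper derivative bound $\phi'(z) \le 1/z^2 \lesssim 1/\lambda^2$ (valid for $\eta$ small enough that $|z| \gtrsim \lambda$ throughout $I^\star$): part~(2) is immediate from the MVT, and part~(1) follows by integrating this bound to obtain $\phi(z) = 1/\lambda + O(\eta/\lambda) \gtrsim 1/\lambda$ for $\eta$ a sufficiently small absolute constant.
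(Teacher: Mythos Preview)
Your proof is correct and takes a genuinely different route from the paper's. The paper proves existence and the bound $z^\star<-c\lambda$ by invoking an argument ``similar to Lemma~\ref{lem:Large-n}'' (details omitted) to show directly that $\Expt\StielEmp(-c\lambda)\ge (1+c)/\lambda$ for small $c$, then proves Item~2 via the resolvent difference identity and deduces Item~1 from Item~2; the precise localization $z^\star\approx -\tilde\lambda$ is deferred to the end of Section~\ref{sec:proof-thm:Alg2} after Lemma~\ref{lem:InverseLipschitz}. You instead work at $z=-\tilde\lambda$ from the start and exploit the fixed-point structure of the Marchenko--Pastur equation: the observation that $g'(s)<0$ forces $|1-g'(\xi)|>1$, hence $|\delta|\le |e_m|$, is a clean way to extract $\phi(-\tilde\lambda)=1/\lambda+O(1/(\lambda\sqrt m))$ in one step. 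Your two-sided derivative bound $\phi(z)^2\le \phi'(z)\le 1/z^2$ (the lower bound via Cauchy--Schwarz on $\StielEmp$ and Jensen on the expectation) is an elementary tool the paper does not use, and it lets you localize $z^\star$ quantitatively near $-\tilde\lambda$ already within this lemma rather than later. The trade-off: the paper's approach reuses machinery already developed for Lemma~\ref{lem:Large-n}, while yours is more self-contained and yields the sharper conclusion $|z^\star+\tilde\lambda|=O(\lambda/\sqrt m)$ directly. One minor point worth stating explicitly: existence of $z^\star$ in the case $\phi(-\tilde\lambda)<1/\lambda$ requires noting that $\phi'\ge \phi^2\ge 1/(4\lambda^2)$ persists on $(-\tilde\lambda,0)$ (by monotonicity $\phi\ge\phi(-\tilde\lambda)\ge 1/(2\lambda)$ for $m$ large), so $\phi$ must cross $1/\lambda$ before reaching $z=0$; you gloss over this, but it follows from what you have.
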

	\begin{proof}
		The existence of $z^\star$, as well as the bound $\Expt\StielEmp_m(z)\ge \eta \frac1\lambda$, can be shown by an argument similar to Lemma~\ref{lem:Large-n}; we omit the technical details. Specifically, one can show that for small enough $c$,
		$\Expt\StielEmp(-c\lambda)\ge (1+c)\frac1\lambda$. Since $\Expt\Stiel(\cdot)$ is continuous increasing, and $\Expt\StielEmp(-\infty)=0$, a root $\Expt\StielEmp(z^\star)=\frac1\lambda$ exists and $z^\star<-c\lambda$. 
		
		Next, we prove Item 2 above. We have 
		\begin{align*}
			|\Expt\StielEmp_m(z)-\Expt\StielEmp_m(z')| = \Expt\frac1m\tr[(S_mHS_m^\T-zI)^{-1}(S_mHS_m^\T-z'I)^{-1}] |z-z'| \le \frac{|z-z'|}{|zz'|} \lesssim \frac{1}{\lambda^2}|z-z'|\,,
		\end{align*}
		since $|z|,|z'|\gtrsim \lambda$, as $z,z'\in I_\eta$.
		
		Item 1 follows immediately from Item 2.
	\end{proof}

	We need to establish a result of the following kind: if $z<0$ is such that $\Expt\StielEmp(z)\approx \frac{1}{\lambda}$, then necessarily $z\approx z^\star$. This necessitates a {lower bound} (rather than an upper bound, such as given in Lemma~\ref{claim:proof-thm:Alg2-claim1}, Item 2) on the derivative of $\Expt\StielEmp(z)$ around $z^\star$.
	\begin{lemma}\label{lem:InverseLipschitz}
		Suppose that $m\ge 1.5\DimEff{\lambda}$ and $m\ge M$ for large enough $M$. 
		
		For a sufficiently small $\BigOh(\lambda)$-neighborhood of $z^\star$, $z^\star \in I^\star$,
		\begin{equation}
			\label{eq:lem:InverseLipschitz}
			|\Expt\StielEmp(z)-\Expt\StielEmp(z')| \gtrsim \frac{1}{\lambda^2}|z-z'| - \BigOh(\frac{1}{\lambda\sqrt{m}}) \,.
		\end{equation}
	\end{lemma}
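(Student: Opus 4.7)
The plan is to invert the approximate Marchenko--Pastur relation provided by Theorem~\ref{thm:Stiel-NonAsymp-Gaussian}, converting an \emph{upper} bound on $\StielInv'$ near $s=1/\lambda$ into the desired \emph{reverse} Lipschitz bound on $\Expt\StielEmp$. Write $s(z):=\Expt\StielEmp(z)$. Multiplying \eqref{eq:thm:Stiel-NonAsymp-Gaussian} by $z$ and dividing by $s(z)$ rearranges as
\[
z = \StielInv(s(z)) + E(z), \qquad E(z) := \frac{z\,e_m(z;H)}{s(z)},
\]
where $\StielInv$ is the deterministic inverse from \eqref{eq:Stiel-Inv-Def}.

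First I would bound $E(z)$ uniformly on $I^\star$. By Lemma~\ref{claim:proof-thm:Alg2-claim1}, $s(z)\gtrsim 1/\lambda$, and trivially $s(z)\le -1/z\lesssim 1/\lambda$ because $|z|\gtrsim \lambda$ on $I^\star$; hence $s(z)\asymp 1/\lambda$. Therefore the parameters in Theorem~\ref{thm:Stiel-NonAsymp-Gaussian} satisfy $\alpha=|z|s(z)\asymp 1$, while $\beta=\frac{1}{m}\DimEff{1/s(z)}\lesssim \frac{1}{m}\DimEff{c\lambda}\lesssim 1$ by Lemma~\ref{lem:DimEff-Contract} and $m\ge 1.5\DimEff{\lambda}$. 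The conclusion of Theorem~\ref{thm:Stiel-NonAsymp-Gaussian} thus yields $|e_m(z;H)|\lesssim \frac{1}{\lambda\sqrt{m}}$, and consequently $|E(z)|\lesssim \frac{\lambda}{\sqrt{m}}$ uniformly for $z\in I^\star$.

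Next I would upper bound $\StielInv'$ on the image $s(I^\star)$. From the derivative computation in \eqref{eq:lem:Stiel-Inv-Monotonicity},
\[
\StielInv'(s) = \frac{1}{s^2} - \frac{1}{m}\tr\bigl(H^2(I+sH)^{-2}\bigr) \le \frac{1}{s^2}\,.
\]
Shrinking $\eta$ in the definition of $I^\star$ if needed, Item 2 of Lemma~\ref{claim:proof-thm:Alg2-claim1} guarantees $s(I^\star)$ lies in an $O(1/\lambda)$-neighborhood of $1/\lambda$, where $\StielInv'(s)\le C\lambda^2$. Thus by the mean value theorem,
\[
|\StielInv(s(z))-\StielInv(s(z'))| \le C\lambda^2\,|s(z)-s(z')|, \qquad z,z'\in I^\star.
\]

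Combining the two displays,
\[
|z-z'| \le C\lambda^2\,|s(z)-s(z')| + |E(z)| + |E(z')| \le C\lambda^2\,|s(z)-s(z')| + O\!\left(\tfrac{\lambda}{\sqrt{m}}\right),
\]
and rearranging yields \eqref{eq:lem:InverseLipschitz} with implicit constant $1/C$. The argument is essentially perturbative around the deterministic Marchenko--Pastur solution, and the only real obstacle is bookkeeping the neighborhood: one must choose $I^\star$ small enough that $s(I^\star)$ stays in a fixed compact subset of $(0,\infty)$ on which both the upper bound on $\StielInv'$ and the a priori bounds $\alpha\asymp 1$, $\beta\lesssim 1$ hold uniformly, while still containing $z^\star$ in its interior; this is where the $O(\lambda)$ scale of the neighborhood enters, and why the error term scales as $\frac{1}{\lambda\sqrt{m}}$ rather than the $\frac{1}{\sqrt{m}}$ appearing naively in Theorem~\ref{thm:Stiel-NonAsymp-Gaussian}.
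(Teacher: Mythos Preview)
Your proof is correct and follows essentially the same route as the paper: rearrange Theorem~\ref{thm:Stiel-NonAsymp-Gaussian} into $z=\StielInv(s(z))+E(z)$ with $|E(z)|\lesssim \lambda/\sqrt{m}$ on $I^\star$, then bound the Lipschitz constant of $\StielInv$ near $1/\lambda$ by $O(\lambda^2)$ and subtract. The only difference is in how you bound that Lipschitz constant: the paper works directly with increments, using Lemma~\ref{lem:DimEff-Contract} to show $|\frac{1}{m}\DimEff{1/s_1}-\frac{1}{m}\DimEff{1/s_2}|=O(\lambda|s_1-s_2|)$; you instead use the derivative formula from \eqref{eq:lem:Stiel-Inv-Monotonicity} and the clean observation $\StielInv'(s)\le 1/s^2$. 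Your route is slightly more direct.

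One small point to make airtight: to apply the mean value theorem you need $|\StielInv'(s)|\le C\lambda^2$, not just $\StielInv'(s)\le C\lambda^2$. This follows because $\StielInv'(s)\ge 0$ on $s(I^\star)$, which you can read off either from Lemma~\ref{lem:Stiel-Inv-Monotonicity} (since $m\ge 1.5\DimEff{\lambda}$ and $s(I^\star)$ sits in a small neighborhood of $1/\lambda$) or from the lower bound $\StielInv'(s)\ge \frac{1}{s^2}(1-\frac{1}{m}\DimEff{1/s})$ together with Lemma~\ref{lem:DimEff-Contract}. You implicitly rely on this when you pass from $\StielInv'(s)\le C\lambda^2$ to the absolute-value Lipschitz bound; mentioning it explicitly closes the loop.
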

	\begin{proof}
		By Theorem~\ref{thm:Stiel-NonAsymp-Gaussian}, having established that $\Expt\StielEmp(z)\gtrsim \frac{1}{\lambda}$ for all $z\in I^\star$, uniformly for all $z\in I^\star$,
		\begin{equation*}\label{eq:lem:InverseLipschitz-0}
		\Expt\StielEmp(z) = -\frac{1}{z}\left( 1-\frac1m\DimEff{1/\Expt\StielEmp(z)}\right) + \BigOh(\frac{1}{\lambda\sqrt{m}})  \,.
		\end{equation*}
		uniformly for all $z\in I^\star$. Dividing by $\Expt\StielEmp(z)$ and multiplying by $z$, 
		\begin{equation}\label{eq:lem:InverseLipschitz-1}
			z = \StielInv(\Expt\StielEmp(z)) + \BigOh(\frac{\lambda}{\sqrt{m}}) \,,
		\end{equation}
		where $\StielInv(s)=-\frac{1}{s}(1-\frac1m\DimEff{1/s})$ is the inverse function of $\Stiel(z)$ (see e.g. \eqref{eq:Stiel-Inv-Def}). 
		Note that for $z,z'\in I^\star$, $|\Expt\StielEmp(z)-\Expt\StielEmp(z')|\lesssim |z-z'|\frac{1}{\lambda^2} = \BigOh(\frac{1}{\lambda})$. Thus, to conclude the proof of the lemma, it suffices to give an upper bound on the derivative (or Lipschitz constant) of $\StielInv$ in a small $\BigOh(1/\lambda)$ neighborhood of $\Expt\StielEmp(z^\star)=\frac1\lambda$.
		
		Indeed, if $s=\frac{1}{\lambda}(1+\eps)$ for small $\eps=\BigOh(1)$, then by Lemma~\ref{lem:DimEff-Contract}, $\frac{1}{1+|\eps|}\DimEff{\lambda} \le \DimEff{1/s}\le \frac{1}{1-|\eps|}\DimEff{\lambda}$. Since $\frac1m\DimEff{\lambda}\le (1.5)^{-1}=\BigOh(1)$, $|\frac{1}{m}\DimEff{1/s}-\frac{1}{m}\DimEff{1/\lambda}|=\BigOh(\eps)$. Thus, for $s_1,s_2$ in a sufficiently small $\BigOh(\frac{1}{\lambda})$-neighborhood of $\frac1\lambda$, $|\frac{1}{m}\DimEff{1/s_1}-\frac{1}{m}\DimEff{1/s_2}|=\BigOh(\lambda|s_1-s_2|)$, and so $|\StielInv(s_1)-\StielInv(s_2)|=\BigOh(\lambda^2|s_1-s_2|)$. 
		Using \eqref{eq:lem:InverseLipschitz-1}, we deduce that for $z_1,z_2$ in a small enough $\BigOh(\lambda)$ neighborhood of $z^\star$, 
		\begin{align*}
			|z_1-z_2| 
			&= \left| \StielInv(\Expt\StielEmp(z_1)) - \StielInv(\Expt\StielEmp(z_2)) + \BigOh(\frac{\lambda}{\sqrt{m}}) \right| \\
			&\lesssim \lambda^2|\Expt\StielEmp(z_1)-\Expt\StielEmp(z_2)| + \BigOh(\frac{\lambda}{\sqrt{m}}) \,.
		\end{align*}
		The desired estimate \eqref{eq:lem:InverseLipschitz} follows dividing by $\lambda^2$.
	\end{proof}
	
	We are ready to conclude the proof of Theorem~\ref{thm:Alg2-Gauss}.
	Let $\eps>0$, $\delta\in (0,1)$ be, respectively, the precision on confidence parameters. 
	Suppose that $\eps=\BigOh(1)$ is small, such that $[z^\star-2\lambda\eps,z^\star+2\lambda\eps]\subseteq I^\star$, therefore setting 
	\begin{align}
		z^\star_\pm = z^\star \pm \lambda\eps 
	\end{align}
	we have $z^\star_\pm \in I^\star$. By 
	Lemma~\ref{lem:InverseLipschitz}, Eq. \eqref{eq:lem:InverseLipschitz}, provided that $m\gtrsim 1/\eps^2$ is large enough,  
	\begin{align*}
		\Expt\StielEmp(z_-^\star) &\le \frac{1}{\lambda} - C_1\eps\frac{1}{\lambda} + \BigOh(\frac{1}{\lambda\sqrt{m}}) \le \frac{1}{\lambda} -  C_2\eps\frac{1}{\lambda}\,, \\ 
		\Expt\StielEmp(z_+^\star) &\ge \frac{1}{\lambda} + C_1\eps\frac{1}{\lambda} - \BigOh(\frac{1}{\lambda\sqrt{m}}) \ge \frac{1}{\lambda} +  C_2\eps\frac{1}{\lambda} \,.
 	\end{align*}
 	By Lemma~\ref{lem:StielConc}, if $m \gtrsim \frac{1}{\eps^2}\log(1/\delta)$ is large enough, then w.p. $1-\delta$, $|\StielEmp(z_-^\star)-\Expt\StielEmp(z_-^\star)|,|\StielEmp(z_+^\star)-\Expt\StielEmp(z_+^\star)|\le C_2\frac{1}{\lambda}\eps/2$. On this event,
	\begin{align*}
	\StielEmp(z_-^\star) \le \frac{1}{\lambda} - \frac12C_2\eps\frac{1}{\lambda} < \frac1\lambda\,, \qquad 
	\StielEmp(z_+^\star) \ge \frac{1}{\lambda} + \frac12C_2\eps\frac{1}{\lambda} > \frac1\lambda\,.
	\end{align*}
	In particular, the output $\hat{\lambda}$ of Algorithm~\ref{Alg:2}, being the solution $\StielEmp(-\hat{\lambda})=1/\lambda$, satisfies $z^\star_- \le -\hat{\lambda} \le z^\star_+$, so $|\hat{\lambda} - (-z^\star)| \le \eps$. Finally, by Eq. \eqref{eq:lem:InverseLipschitz-1},
	\[
	z^\star = \StielInv(1/\lambda) + \BigOh(\frac{\lambda}{\sqrt{m}}) = -\tilde{\lambda} + \BigOh(\frac{\lambda}{\sqrt{m}}) = -\tilde{\lambda} + \BigOh(\lambda\eps)
	\]
	whenever $m\gtrsim 1/\eps^2$ large enough. 
	Since $\tilde{\lambda}\ge \lambda(1-(1.5)^{-1})=\frac13\lambda = \Omega(\lambda)$, this implies
	\[
	\frac{|\hat{\lambda}-\tilde{\lambda}|}{\tilde{\lambda}} = \BigOh(\eps) \,.
	\]
	Thus, Theorem~\ref{thm:Alg2-Gauss} is proved.
	
\end{proof}

In the sequel, the following bound on $\Expt|\hl-\tilde{\lambda}|$ will be useful.
\begin{lemma}\label{lem:thm:Alg-2-Gauss-Expt}
	Suppose that $m\ge 1.5\DimEff{\lambda}$ and $m\ge M$ for large enough $M$. For any $p>0$,
	\begin{align}
		\Expt|\hl-\tilde{\lambda}|^p \le C_p \lambda^p m^{-p/2}\,.
	\end{align}
\end{lemma}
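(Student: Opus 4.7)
The plan is to convert the high-probability estimate of Theorem~\ref{thm:Alg2-Gauss} into a sub-Gaussian tail bound for $|\hl - \tilde\lambda|$, and then integrate via the layer-cake formula to obtain moments.

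First, I would parameterize Theorem~\ref{thm:Alg2-Gauss} as a tail estimate. Let $c_0 > 0$ be a small absolute constant such that the conclusion of Theorem~\ref{thm:Alg2-Gauss} applies for all $\eps \in (0, c_0]$ (inspection of the proof in Section~\ref{sec:proof-thm:Alg2} shows such a $c_0$ exists, as $\eps$ needs only be small enough that $[z^\star-2\lambda\eps,z^\star+2\lambda\eps]\subseteq I^\star$). For $s \in (0, c_0\tilde\lambda]$, set $\eps := s/\tilde\lambda$. Solving the condition $m \ge C(1+\log(1/\delta))/\eps^2$ for $\delta$ yields, whenever $m \ge M$ large enough,
\begin{equation}
\Pr\bigl(|\hl - \tilde\lambda| > s\bigr) \le e \cdot \exp\bigl(-c\, m s^2 / \tilde\lambda^2\bigr) \label{eq:tail-proposal}
\end{equation}
for an absolute constant $c > 0$.

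Second, I would use the deterministic a-priori bound provided by Algorithm~\ref{Alg:2}: the output satisfies $\hl \in [5\lambda/12, \lambda]$, while the assumption $m \ge 1.5\DimEff{\lambda}$ gives $\tilde\lambda = \lambda(1-\DimEff{\lambda}/m) \in [\lambda/3, \lambda]$. Hence $|\hl - \tilde\lambda| \le \lambda$ almost surely, so $\Pr(|\hl - \tilde\lambda| > s) = 0$ for $s > \lambda$, and evaluating \eqref{eq:tail-proposal} at $s = c_0\tilde\lambda \gtrsim \lambda$ yields a tail probability of order $e^{-cm}$ for $s \in (c_0\tilde\lambda,\lambda]$.

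Third, I would assemble the moment bound via the layer-cake formula:
\begin{align*}
\Expt|\hl - \tilde\lambda|^p
&= \int_0^\infty p\, s^{p-1} \Pr(|\hl - \tilde\lambda| > s)\, ds \\
&\le \int_0^{c_0\tilde\lambda} p\, s^{p-1}\, e\exp\!\bigl(-c m s^2/\tilde\lambda^2\bigr)\, ds \;+\; \lambda^p \cdot e\exp(-c m c_0^2).
\end{align*}
The first integral, after substituting $u = s\sqrt{cm}/\tilde\lambda$, becomes $\tilde\lambda^p m^{-p/2}$ times a Gaussian-type moment $\int_0^\infty p u^{p-1}e^{-u^2}du = \tfrac{p}{2}\Gamma(p/2)$, contributing $C_p \tilde\lambda^p m^{-p/2}$. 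The second term decays super-polynomially in $m$, so for $m \ge M_p$ (absorbable into the hypothesis $m \ge M$) it is dominated by $\lambda^p m^{-p/2}$. Since $\tilde\lambda \le \lambda$, the combined bound is at most $C_p \lambda^p m^{-p/2}$, as claimed.

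The main ``obstacle'' is purely bookkeeping: verifying that Theorem~\ref{thm:Alg2-Gauss} is invoked only in its small-$\eps$ regime of validity, and gluing the Gaussian-regime integral to the deterministic-regime remainder at the cutoff $s = c_0\tilde\lambda$. No substantively new argument is needed beyond Theorem~\ref{thm:Alg2-Gauss} and the explicit construction of Algorithm~\ref{Alg:2}.
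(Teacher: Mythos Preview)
Your proposal is correct and follows essentially the same approach as the paper: both establish a sub-Gaussian tail bound $\Pr(|\hl-\tilde\lambda|>t\lambda)\lesssim e^{-cmt^2}$ for small $t$, handle large deviations via the deterministic a-priori bound $|\hl-\tilde\lambda|\le\lambda$, and then integrate by the layer-cake formula. The only cosmetic difference is that you invoke Theorem~\ref{thm:Alg2-Gauss} as a black box and re-parameterize, whereas the paper re-derives the tail directly from the concentration inequality (Lemma~\ref{lem:StielConc}) used inside that theorem's proof.
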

\begin{proof}
	Let $t_0>0$ be a small enough constant, such that $[z^\star-\lambda t_0, z^\star+\lambda t_0]\subseteq I^\star$. By the above, $\Pr(|\hl-\tilde{\lambda}|\ge t_0\lambda ) = \Pr(-\hl\notin [z^\star-\lambda t_0, z^\star+\lambda t_0]) \le Ce^{-cm}$. Set $z^\star_\pm(t) = z^\star \pm \lambda t$, so that by the above, $\Pr(\StielEmp_m(z^\star_+(t) )<1/\lambda),\Pr(\StielEmp_m(z^\star_-(t) )>1/\lambda)\le Ce^{-cmt^2}$ for all $t\le t_0$. We have
	\[
	\Pr(|\hl-\tilde{\lambda}|\ge t\lambda) = \Pr(-\hl\notin [z^\star_-(t),z^\star_+(t)]) 
	= \Pr(\{\StielEmp_m(z^\star_+(t) )<1/\lambda\}\cup \{\StielEmp_m(z^\star_-(t) )>1/\lambda\}) \le 2Ce^{-cmt^2} \,.
	\]
	Thus,
	\begin{align*}
		\Expt|\hl-\tilde{\lambda}|^p 
		= \Expt[|\hl-\tilde{\lambda}|^p \Indic{|\hl-\tilde{\lambda}|\le \lambda t_0}] + \Expt[|\hl-\tilde{\lambda}|^p \Indic{|\hl-\tilde{\lambda}|> \lambda t_0}] \le \Expt[|\hl-\tilde{\lambda}| \Indic{|\hl-\tilde{\lambda}|\le \lambda t_0}] + C\lambda^p e^{-cm}\,,
	\end{align*}
	and
	\begin{align*}
		\Expt[|\hl-\tilde{\lambda}|^p \Indic{|\hl-\tilde{\lambda}|\le \lambda t_0}] = \int_{0}^{t_0} \lambda^{p}pt^{p-1} \Pr(|\hl-\tilde{\lambda}|\ge \lambda t)dt \lesssim p\lambda^p \int_{0}^{t_0} t^{p-1} e^{-cmt^2} dt \le C_p \lambda^p m^{-p/2} \,.
	\end{align*}
	And so, $\Expt|\hl-\tilde{\lambda}|^p\lesssim \lambda\frac{1}{\sqrt{m}} + \lambda e^{-cm} = \BigOh(\lambda\frac{1}{\sqrt{m}})$.
\end{proof}

\newpage

\section{Proof of Theorem~\ref{thm:Smallest-Eigenvalue-Gauss}}
\label{sec:proof-thm:Smallest-Eigenvalue}

\begin{proof}
	[\unskip\nopunct]

By definition, if $\tilde{H}\preceq H$ (in PSD order) then $S\tilde{H}S^\T \preceq SHS^\T$, in particular $\lambda_{\min}(SHS^\T)\ge \lambda_{\min}(S\tilde{H}S^\T)$. Choose $\tilde{H}=H(\mu H + I)^{-1}$, noting that $\|\tilde{H}\|\le 1/\mu$ can be bounded irrespective of $\|H\|$.

We now lower bound $\lambda_{\min}(S\tilde{H}S)$ using a standard net argument, cf. \citep{vershynin2018high}.
For any $\eps>0$, let $\m{N}_\eps$ be an $\eps$-net of $\m{S}^{m-1}$ of minimum size. One can show that $|\m{N}_\eps|\le \left( 1+\frac{2}{\eps} \right)^m$. 
We have 
\begin{align}\label{eq:Smallest-Eigenvalue-aux-1}
    \lambda_{\min}(S\tilde{H}S^\T) = \min_{\|x\|=1}x^\T S\tilde{H}S^\T x \ge \min_{x\in \m{N}_\eps}x^\T S\tilde{H}S^\T x - 2\|S\tilde{H}S^\T\|\eps \,.
\end{align}
Note that for fixed $x\in \m{S}^{m-1}$, $y=S^\T x$ has independent, mean zero, sub-Gaussian entries with $\max_{1\le i \le d}\|y_i\|_{\psi_2}=O(1)$ and variance $1/m$.

We first give a high-probability upper bound on $\|S\tilde{H}S^\T\|$. Let $\m{N}_{1/4}$ a $1/4$-net of minimum size, so that 
\begin{align*}
	\|S\tilde{H}S^\T \| = \max_{\|x\|=1}x^\T S\tilde{H}S x \le \max_{x\in \m{N}_{1/4}} x^\T S\tilde{H}S x + 2 \cdot \|S\tilde{H}S^\T\| \cdot \frac{1}{4}\,,
\end{align*}
hence $\|S\tilde{H}S^\T \| \le 2  \max_{x\in \m{N}_{1/4}} x^\T S_m\tilde{H}S_m x$.
By the Hanson-Wright inequality, Lemma~\ref{eq:HansonWright} (see also \cite[Theorem 6.2.1]{vershynin2018high}),
\begin{align*}
	\Pr(\|S\tilde{H}S^\T \|\ge 2m^{-1}\tr(\tilde{H}) + t) 
	&\le |\m{N}_{1/4}|\max_{x\in\m{N}_{1/4}} \Pr\left( x^\T S\tilde{H}S x \ge m^{-1}\tr(\tilde{H}) + t/2 \right) \\
	&\le 9^m 2\exp\left( -c\min\{ \frac{m^2t^2}{\|\tilde{H}\|_F^2}, \frac{mt}{\|\tilde{H}\|} \} \right) \,.
\end{align*}
Note that 
\begin{align*}
	\tr(\tilde{H})= \frac{1}{\mu}{\DimEff{1/\mu}},\qquad \|\tilde{H}\| \le 1/\mu,\qquad \|\tilde{H}\|_F\le \sqrt{\|\tilde{H}\|\tr(\tilde{H})} \le \frac{1}{\mu} \sqrt{\DimEff{1/\mu}}\,.
\end{align*}
Set $t= C_1\max\{\frac{1}{\sqrt{m}}\|\tilde{H}\|_F,\|\tilde{H}\|\}$ large enough. For such choice, w.p. $1-Ce^{-cm}$, 
\begin{align}
	\|S\tilde{H}S\| 
	&\le \frac{2}{m}\tr(\tilde{H}) + C_1\max\{ \frac{1}{\sqrt{m}}\|\tilde{H}\|_F, \|\tilde{H}\| \} \nonumber \\
	&\lesssim \frac{1}{\mu}\frac{{\DimEff{1/\mu}}}{m} + \frac{1}{\mu}\max\{1,\sqrt{\frac{d_{1/\mu}}{m}}\} \label{eq:Smallest-Eigenvalue-aux-2}
\end{align}

Next, we give a high-probability lower bound on $\min_{x\in\m{N}_\eps}x^\T S\tilde{H}S^\T x$ in \eqref{eq:Smallest-Eigenvalue-aux-1}. Again by Hanson-Wright,
\begin{align*}
	\Pr(\min_{x\in\m{N}_\eps}x^\T S\tilde{H}S^\T x \le  m^{-1}\tr(\tilde{H})-t) \le \left(1+\frac{2}{\eps}\right)^m  2\exp\left( -c\min\{ \frac{m^2t^2}{\|\tilde{H}\|_F^2}, \frac{mt}{\|\tilde{H}\|} \} \right)\,,
\end{align*}
so that w.p. $1-Ce^{-cm}$, for small $\eps>0$,
\begin{align}
	\min_{x\in\m{N}_\eps}x^\T S\tilde{H}S^\T x \ge \frac{1}{\mu}\frac{{\DimEff{1/\mu}}(H)}{m} - C_2 \log(1+1/\eps)\frac{1}{\mu}\max\{1, \sqrt{\frac{{\DimEff{1/\mu}}(H)}{m}}\} \label{eq:Smallest-Eigenvalue-aux-3}
\end{align}

Inserting \eqref{eq:Smallest-Eigenvalue-aux-2}-\eqref{eq:Smallest-Eigenvalue-aux-3} into \eqref{eq:Smallest-Eigenvalue-aux-1} implies that w.p. $1-Ce^{-cm}$,
\begin{equation}
    \lambda_{\min}(SHS^\T) \ge (1-O(\eps))\frac{1}{\mu}\frac{\DimEff{1/\mu}}{m} - O(\eps+\log(1+1/\eps))\frac{1}{\mu}\max\{1, \sqrt{\frac{{\DimEff{1/\mu}}(H)}{m}}\} .
\end{equation}
By choosing $\eps>0$ a sufficiently small numerical constant, we can deduce the following: there are numerical constants $c,C,c_1,c_2>0$ such that if $\DimEff{1/\mu}/m>1/c_1$, then  
w.p. $1-Ce^{-cm}$, $\lambda_{\min}(\tilde{H}) \ge c_2 \frac{1}{\mu}\frac{\DimEff{1/\mu}}{m}$. To recover the claimed result, use $\mu=1/\eta$.

\end{proof}

\newpage

\section{Proof of Lemma~\ref{lem:StielConc}}
\label{sec:proof-lem:StielConc}

The following argument is standard (cf. \citep{bai2010spectral}) and presented for completeness.

\begin{proof}[\unskip\nopunct]
	Let $r_1,\ldots,r_m\in \RR^d$ be the rows of $S$; that is, $S^\T=[r_1|\ldots|r_m]$. Let
	\begin{equation}
		\hat{\Sigma}_m = H^{1/2}S^\T S H^{1/2} =  \frac1m\sum_{i=1}^n H^{1/2}r_ir_i^\T H^{1/2} \,.
	\end{equation}
	This is a sample covariance matrix, corresponding to $m$ samples with population covariance $\Expt[\hat{\Sigma}_m]=H$. Recall that $\hat{\Sigma}_m\in\RR^{d\times d}$ and $S_mHS_m^\T\in\RR^{m\times m}$ have the same non-zero eigenvalues. Denote
	\begin{align}
		\Pm(z) = (\hat{\Sigma}_m+zI)^{-1}\,,
	\end{align}
	so that 
	\begin{align}\label{eq:Stiel-To-Sigma}
		\StielEmp(z)=m^{-1}\tr(\Pm(z)) + m^{-1}(d-m)\frac{1}{z}\,.
	\end{align}
	
	Central to the proof is the following leave-one-out decomposition:
	\begin{align}
		\Pm(z) &= \PmMinK(z) - \frac{\frac{1}{m}\PmMinK(z) H^{1/2} r_k r_k^\T H^{1/2}\PmMinK(z) }{1 + \frac{1}{m}r_k^\T H^{1/2} \PmMinK(z) H^{1/2} r_k}, \label{eq:PmMinK}\\
		\PmMinK(z) &= (\hat{\Sigma}_m - \frac1m H^{1/2}r_kr_k^\T H^{1/2}+zI)^{-1}, \qquad k=1,\ldots,m \,.
	\end{align}
	The above can be readily shown by the Sherman-Morrison lemma (Lemma~\ref{lem:Sherman-Morrison}).
	
	We shall now prove Lemma~\ref{lem:StielConc} by a standard martingale concentration argument. Let $\m{F}_k$ be the $\sigma$-algebra generated by $r_1,\ldots,r_k$, $k=0,1,\ldots, m$, and $\Expt_{\le k}[\cdot]:=\Expt[\cdot|\m{F}_k]$. Decompose into a sum of martingale differences,
	\begin{align}
		\StielEmp(z)-\Expt[\StielEmp(z)] 
		&= m^{-1}\sum_{k=1}^m (\Expt_{\le k}-\Expt_{\le k-1})[\tr\Pm(z)] \nonumber \\
		&= m^{-1}\sum_{k=1}^m (\Expt_{\le k}-\Expt_{\le k-1})[D_{m,k}(z)]\,,\label{eq:proof-lem:StielConc-aux1}
	\end{align}
	where, using \eqref{eq:PmMinK},
	\begin{align}\label{eq:Qmk-Def}
		D_{m,k}(z) = \frac{\frac{1}{m}r_k^\T H^{1/2}(\PmMinK(z))^2H^{1/2}r_k}{1 + \frac{1}{m}r_k^\T H^{1/2} \PmMinK(z) H^{1/2} r_k}\,.
	\end{align}
	Note that $0\preceq \PmMinK(z)\preceq \frac{1}{-z}I$, hence
	\begin{align*}
		|D_{m,k}(z)|  \le \frac{1}{-z}\cdot \frac{\frac{1}{m}r_k^\T H^{1/2}(\PmMinK(z))H^{1/2}r_k}{1 + \frac{1}{m}r_k^\T H^{1/2} \PmMinK(z) H^{1/2} r_k} \le \frac{1}{-z}\,.
	\end{align*}
	Thus, \eqref{eq:proof-lem:StielConc-aux1} is a sum of bounded martingale differences $|(\Expt_{\le k}-\Expt_{\le k-1})[D_{m,k}(z)]|\le \frac{2}{-z}$. To conclude, use the Azuma-Hoeffding inequality (Lemma~\ref{lem:Hoeffding}).

	Remark: The above calculation (regarding the boundedness of $D_{m,k}(z)$) is, essentially, the proof of the well-known low-rank perturbation bound for resolvents, Lemma~\ref{lem:LowRankResolvent}, which we shall use later.

\end{proof}

\newpage

\section{Proof of Theorem~\ref{thm:Stiel-NonAsymp-Gaussian}}
\label{sec:proof-thm:Stiel-NonAsymp-Gaussian}

\begin{proof}
	[\unskip\nopunct]

We implement a well-known computation technique in random matrix theory (cf. \cite{bai2010spectral}), while carefully keeping track of the error terms. 

We rely explicitly on properties of the Gaussian distribution. Denote the eigendecomposition $H=\sum_{\ell=1}^d \tau_\ell v_\ell v_\ell^\T$ and 
\begin{align*}
	H_{\setminus \ell} = H - \tau_\ell v_\ell v_\ell^\T\,,\qquad s_\ell = S v_\ell \,.
\end{align*}
for $\ell=1,\ldots,d$. Note that $s_1,\ldots,s_d\sim \m{N}(0,m^{-1}I_m)$  are independent (since $v_1,\ldots,v_d$ are orthogonal).

Recall Eq. \eqref{eq:Stiel-To-Sigma}, so that 
\begin{align}
	\StielEmp(z) 
	&= -\frac1m(d-m)\frac{1}{z} + \frac1m \tr(H^{1/2}S^\T S H^{1/2} - z I)^{-1} \nonumber \\
	&= -\frac1m(d-m)\frac{1}{z} + \frac1m \sum_{\ell=1}^d v_\ell^\T (H^{1/2}S^\T S H^{1/2} - z I)^{-1} v_\ell \,.
	\label{eq:proof-thm:Stiel-NonAsymp-Gaussian-0}
\end{align}
We now compute an expression for $v_\ell^\T (H^{1/2}S_m^\T S_m H^{1/2} - z I)^{-1} v_\ell$, the $\ell$-th diagonal element of the resolvent $(H^{1/2}S_m^\T S_m H^{1/2} - z I)^{-1} $ written in the population covariance eigenbasis $V=[v_1|\dots|v_d]^\T$. 

Upon a coordinate permutation (where $\ell$ becomes the first), we have
\begin{align*}
	V^\T (H^{1/2}S^\T S H^{1/2}-z I)V = 
	\MatL 
		\tau_\ell \|s_\ell\|^2 - z 
		&\sqrt{\tau_\ell}s_\ell S H^{1/2}_{\setminus \ell} 
		\\
		\sqrt{\tau_\ell}H_{\setminus \ell}^{1/2}S^\T s_\ell 
		&H_{\setminus\ell}^{1/2}S^\T S H_{\setminus\ell}^{1/2}-z I
	\MatR \,.
\end{align*}
By the block matrix inverse formula (Lemma~\ref{lem:BlockInverse}),
\begin{align}
	v_\ell^\T (H^{1/2}S^\T S H^{1/2}-z I)^{-1} v_\ell 
	&= (V^\T (H^{1/2}S^\T S H^{1/2}-z I)V)^{-1}_{\ell,\ell} \nonumber \\
	&= \left(
		\tau_\ell \|s_\ell\|^2 - z 
		-
		{\tau_\ell}s_\ell S H^{1/2}_{\setminus \ell} 
		(H_{\setminus\ell}^{1/2}S^\T S H_{\setminus\ell}^{1/2}-z I)^{-1}
		H_{\setminus \ell}^{1/2}S^\T s_\ell
	\right)^{-1}
	\nonumber \\
	&\overset{(\star)}{=} \left(
		\tau_\ell \|s_\ell\|^2 - z 
		-
		{\tau_\ell}s_\ell 
		(S H_{\setminus\ell} S^\T -z I)^{-1}
		S H_{\setminus\ell} S^\T s_\ell
	\right)^{-1} 
	\nonumber \\
	&= -\frac1z
	\left(
		1 + \tau_\ell s_\ell 
		(S H_{\setminus\ell} S^\T -z I)^{-1}
		s_\ell
	\right)^{-1}\,,
	\label{eq:proof-thm:Stiel-NonAsymp-Gaussian-1}
\end{align}
where in $(\star)$ we used the identity $X^\T f(XX^\T) X = f(X^\T X)X^\T X$ which holds for any matrix $X$ and analytic function $f(\cdot)$.

We now wish to estimate the expectation of \eqref{eq:proof-thm:Stiel-NonAsymp-Gaussian-1}. Denote 
\begin{align*}
	D_\ell 
	&= s_\ell 
	(S H_{\setminus\ell} S^\T -z I)^{-1}
	s_\ell - \Expt\StielEmp(z) \\
	&= D_{\ell,1} + D_{\ell,2} + D_{\ell,3}
\end{align*}
where
\begin{align*}
	D_{\ell,1} 
	&= s_\ell 
	(S H_{\setminus\ell} S^\T -z I)^{-1}
	s_\ell 
	- 
	\frac1m
	\tr(S H_{\setminus\ell} S^\T -z I)^{-1} \,,\\
	D_{\ell,2}
	&= 
	\frac1m
	\tr(S H_{\setminus\ell} S^\T -z I)^{-1} 
	- 
	\frac1m
	\tr(S H S^\T -z I)^{-1} \,,\\
	D_{\ell,3}
	&= 
	\frac1m
	\tr(S H S^\T -z I)^{-1} - 
	\frac1m\Expt
	\tr(S H S^\T -z I)^{-1} \,.
\end{align*}
We have 
\begin{align*}
	 v_\ell^\T (H^{1/2}S^\T S H^{1/2}-z I)^{-1} v_\ell  =  -\frac1z
	\left(
		1 + \tau_\ell \Expt\StielEmp(z)
	\right)^{-1}
	+ 
	\frac1z \frac{\tau_\ell D_\ell}{(1 + \tau_\ell s_\ell 
	(S H_{\setminus\ell} S^\T -z I)^{-1}
	s_\ell)(1 + \tau_\ell \Expt\Stiel(z))}\,,
\end{align*}
so that using \eqref{eq:proof-thm:Stiel-NonAsymp-Gaussian-0},
\begin{align}
	\Expt\Stiel(z)
	&= \frac1m(d-m)\frac{1}{z} + \frac1m \sum_{\ell=1}^d \Expt v_\ell^\T (H^{1/2}S^\T S H^{1/2} - z I)^{-1} v_\ell \nonumber \\
	&= \frac1m(d-m)\frac{1}{z}  -\frac1m \sum_{\ell=1}^d \frac1z
	\left(
		1 + \tau_\ell \Expt\StielEmp(z)
	\right)^{-1}
	+ e_m(z) \nonumber \\
	&= -\frac1z + \frac1z \frac1m \tr\left[ I - (I+\Expt\StielEmp(z)H)^{-1} \right] + e_m(z) \nonumber \\
	&= -\frac1z \left(  1- \frac1m\DimEff{1/\Expt\StielEmp(z)} \right) + e_m(z) \,,
\end{align}
where
\begin{align}
	e_m(z)
	&= \frac{1}{zm}\sum_{\ell=1}^m 
	\Expt\left[ \frac{\tau_\ell D_\ell}{(1 + \tau_\ell s_\ell 
	(S H_{\setminus\ell} S^\T -z I)^{-1}
	s_\ell)(1 + \tau_\ell \Expt\Stiel(z))} \right]\,. \nonumber 
\end{align}
We may now bound
\begin{align}
	|e_m(z)| 
	&\le \frac{1}{m|z|}\sum_{\ell=1}^d \frac{\tau_\ell}{1+\tau_\ell\Expt\StielEmp(z)}\max_{1\le \ell \le d} \Expt|D_\ell| \nonumber \\ 
	&=
	\frac{1}{|z|\Expt\StielEmp(z)} \frac1m\DimEff{1/\Expt\StielEmp(z)} \max_{1\le \ell \le d}\Expt|D_\ell| \,.
\end{align}
We now decompose $\Expt|D_\ell| \le \Expt|D_{\ell,1}| + \Expt|D_{\ell,2}| + \Expt|D_{\ell,3}|$. By Lemma~\ref{lem:HW-Properties} (Item 1),
\begin{align*}
	\Expt|D_{\ell,1}| \le \sqrt{\Expt|D_{1,\ell}|^2} \lesssim \frac1m \Expt \|(S_mH_{\setminus\ell} S_m - zI)^{-1}\|_F \le \frac{1}{|z|\sqrt{m}}\,. 
\end{align*}
By the low rank resolvent perturbation lemma, Lemma~\ref{lem:LowRankResolvent}, almost surely
\begin{align*}
	|D_{\ell,2}| \lesssim \frac{1}{|z|m} \,.
\end{align*}
Finally, by Lemma~\ref{lem:StielConc},
\begin{align*}
	\Expt|D_{\ell,3}|=\Expt|\StielEmp(z)-\Expt\StielEmp_m(z)| \lesssim \frac{1}{|z|\sqrt{m}} \,.
\end{align*}
Thus,
\begin{equation}
	|e_m(z)|=\BigOh\left( \frac{1}{|z|\Expt\StielEmp(z)} \frac1m\DimEff{1/\Expt\StielEmp(z)} \frac{1}{|z|\sqrt{m}} \right)\,,
\end{equation}
and so the proof is concluded.

\end{proof}

\newpage

\section{Proof of Theorem~\ref{thm:Avg-Gauss}}

	Let $\hl$ the output of Algorithm~\ref{Alg:2}, and denote 
	\begin{align}
		\m{E} = (H+\lambda I)^{1/2}\hat{W} (H+\lambda I)^{1/2} - I,\qquad \hW := S^\T (S H S^\T + \hl I)^{-1} S\,.
	\end{align}
	We have $\bar{\m{E}}=\frac1q \sum_{\ell=1}^q \m{E}^{(\ell)}$, where $\m{E}^{(1)},\ldots,\m{E}^{(q)}\IIDDist \m{E}$.

	The proof proceeds in two parts. First we show that the expectation $\Expt[\m{E}]$ is small; then, we show that $\bar{\m{E}}$ concentrates around $\Expt[\bar{\m{E}}]=\Expt[\m{E}]$ using matrix concentration inequalities.

	The following lemma, proven in Section~\ref{sec:proof-lem:Expt-Matrix-Gauss}, bounds $\Expt[\m{E}]$.
	\begin{lemma}
		\label{lem:Expt-Matrix-Gauss}
		Suppose that $S$ has i.i.d. Gaussian entries. Assume that $m\ge 1.5\DimEff{\lambda}$. We have 
		\[
		\|\Expt[\m{E}]\| = \BigOh(\frac{1}{\sqrt{m}}) \,.
		\]
	\end{lemma}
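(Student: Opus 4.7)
The plan is to decompose the bias into two pieces and handle each separately: the intrinsic bias of the sketched inverse at the deterministic ``oracle'' regularizer $\tilde\lambda := \lambda(1 - \EffDim(\lambda)/m)$, and the additional error from using $\hl$ in place of $\tilde\lambda$. Set $\hW_\star := S^\T(SHS^\T + \tilde\lambda I)^{-1}S$ and write $\m{E} = \m{E}_\star + \m{R}$ with $\m{E}_\star := (H+\lambda I)^{1/2}\hW_\star(H+\lambda I)^{1/2} - I$ and $\m{R} := (H+\lambda I)^{1/2}(\hW - \hW_\star)(H+\lambda I)^{1/2}$, and bound $\|\Expt[\m{E}_\star]\|$ and $\Expt\|\m{R}\|$ independently.

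For $\|\Expt[\m{E}_\star]\|$, the first step is orthogonal invariance of the Gaussian law: replacing $H$ by $Q^\T H Q$ and $S$ by $SQ$ with any orthogonal $Q$ leaves the distribution of the relevant quantities unchanged, so one may assume $H = \diag(\tau_1,\ldots,\tau_d)$. In this basis the sign-flip $e_\ell \mapsto -e_\ell$ is a symmetry of the law of $S$ that leaves $SHS^\T$ invariant but flips off-diagonal entries of $\hW_\star$, forcing $\Expt[\hW_\star]$ to be diagonal. It thus suffices to estimate its $\ell$-th diagonal entry $\Expt[s_\ell^\T (SHS^\T + \tilde\lambda I)^{-1}s_\ell]$, where $s_\ell := Se_\ell \sim \m{N}(0, I/m)$. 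Applying Sherman--Morrison to the rank-one update $SHS^\T = A_{\setminus\ell} + \tau_\ell s_\ell s_\ell^\T$ with $A_{\setminus\ell} := \sum_{k\ne \ell}\tau_k s_k s_k^\T$ independent of $s_\ell$, one obtains $(\hW_\star)_{\ell\ell} = Y_\ell/(1 + \tau_\ell Y_\ell)$ with $Y_\ell := s_\ell^\T (A_{\setminus\ell}+\tilde\lambda I)^{-1}s_\ell$. Hanson--Wright plus independence yield $Y_\ell = \frac{1}{m}\tr(A_{\setminus\ell} + \tilde\lambda I)^{-1} + \BigOh(m^{-1/2})$ in expectation, which equals $\StielEmp(-\tilde\lambda) + \BigOh(1/m)$ by the rank-one perturbation bound (Lemma~\ref{lem:LowRankResolvent}), and in turn equals $1/\lambda + \BigOh(m^{-1/2})$ by Theorem~\ref{thm:Stiel-NonAsymp-Gaussian} together with the defining property $\Stiel(-\tilde\lambda) = 1/\lambda$. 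Linearizing $Y \mapsto Y/(1+\tau_\ell Y)$ around $Y = 1/\lambda$ then gives $(\tau_\ell + \lambda)\Expt[(\hW_\star)_{\ell\ell}] - 1 = \BigOh(\lambda^2/((\tau_\ell + \lambda)\sqrt{m})) = \BigOh(\lambda/\sqrt{m})$ uniformly in $\ell$ (since $\lambda/(\tau_\ell + \lambda)\le 1$), hence $\|\Expt[\m{E}_\star]\| = \BigOh(1/\sqrt{m})$.

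For $\Expt\|\m{R}\|$, the resolvent identity together with the push-through identity $(SHS^\T + \hl I)^{-1}S = S(HS^\T S + \hl I)^{-1}$ yields $\hW - \hW_\star = (\tilde\lambda - \hl)\hW_\star(HS^\T S + \hl I)^{-1}$. Condition on the high-probability event $\hl \in [\lambda/4,\lambda]$ (from Theorem~\ref{thm:Alg2-Gauss}); on its complement the contribution is exponentially small after truncation. On the good event one can factor out the already-controlled $I + \m{E}_\star = (H+\lambda I)^{1/2}\hW_\star(H+\lambda I)^{1/2}$ and combine Cauchy--Schwarz with the second-moment bound $\Expt|\hl - \tilde\lambda|^2 = \BigOh(\lambda^2/m)$ from Lemma~\ref{lem:thm:Alg-2-Gauss-Expt} to get $\Expt\|\m{R}\| = \BigOh(1/\sqrt{m})$. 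The main technical obstacle is keeping this bound \emph{dimension-free}: any naive estimate using $\|S^\T M S\|\le \|S\|^2\|M\|$ introduces a factor $\|S\|^2\asymp d/m$ that would invalidate the claim when $d \gg m$, so the push-through rewriting is essential, as it absorbs the outer $S^\T, S$ factors into $I + \m{E}_\star$ and leaves only a residual resolvent factor whose spectrum is controlled by $\hl \gtrsim \lambda$ rather than by $\|S\|$.
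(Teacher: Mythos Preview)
Your treatment of $\m{E}_\star$ is essentially the paper's argument: diagonalize $H$, use sign-flip symmetry to reduce to diagonal entries, apply Sherman--Morrison, then control the resulting scalar via Hanson--Wright, the rank-one resolvent perturbation, and Theorem~\ref{thm:Stiel-NonAsymp-Gaussian}. This part is fine.

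The gap is in the $\m{R}$ term. Your push-through factoring gives $\|\m{R}\|\le \frac{|\tilde\lambda-\hl|}{\hl}\,\|I+\m{E}_\star\|$ (or the same with $\tilde\lambda$ and $I+\m{E}$), and you then plan to use Cauchy--Schwarz together with $\Expt|\hl-\tilde\lambda|^2=\BigOh(\lambda^2/m)$. But this requires $\Expt\|I+\m{E}_\star\|^2=\BigOh(1)$, which is false: what you controlled earlier is $\|\Expt[\m{E}_\star]\|$, not $\|\m{E}_\star\|$. In fact $\|I+\m{E}_\star\|\lesssim 1+\|S^\top S\|$ and this is sharp in general, so $\Expt\|I+\m{E}_\star\|^2\asymp (d/m)^2$. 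The push-through does not ``absorb'' the $\|S\|^2$ factor---it merely relocates it inside $I+\m{E}_\star$. Concretely, take $H=\diag(1,0,\ldots,0)$: then $\hl$ depends only on $s_1=Se_1$, while the restriction of $\m{R}$ to coordinates $\ell,k\ge 2$ is, up to lower-order terms, $\frac{\tilde\lambda-\hl}{\tilde\lambda}\,S_{\setminus 1}^\top S_{\setminus 1}$, whose norm is $\gtrsim \frac{|\tilde\lambda-\hl|}{\lambda}\cdot\frac{d}{m}$. Hence $\Expt\|\m{R}\|$ is not $\BigOh(1/\sqrt{m})$ dimension-free.

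The paper avoids this by never passing through $\Expt\|\m{R}\|$. It keeps $\hl$ inside the sketch throughout and uses the same sign-flip symmetry (which applies equally well to $\hW$ as to $\hW_\star$) to reduce to the diagonal entries of $\Expt[\m{E}]$. For each $\ell$ one gets $v_\ell^\top\m{E} v_\ell=\BigOh(|\lambda\,s_\ell^\top \Tell(\hl)s_\ell-1|)$, and the scalar $\lambda\,s_\ell^\top \Tell(\hl)s_\ell-1$ is split into four pieces $\Delta_1,\ldots,\Delta_4$: two of them ($\Delta_2,\Delta_3$) are exactly your $\m{E}_\star$ ingredients, while $\Delta_1,\Delta_4$ handle the $\hl$-versus-$\tilde\lambda$ discrepancy \emph{at the scalar level}, where the relevant prefactor is $(\tau_\ell+\lambda)/(\tau_\ell+(s_\ell^\top \Tell(\hl)s_\ell)^{-1})=\BigOh(1)$ rather than $\|I+\m{E}_\star\|$. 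Your decomposition can be salvaged the same way: since $\Expt[\m{R}]$ is also diagonal in the eigenbasis of $H$, bound $|\Expt[v_\ell^\top\m{R} v_\ell]|$ directly instead of $\Expt\|\m{R}\|$; this is precisely the paper's $\Delta_1+\Delta_4$ computation.
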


	To establish concentration, we use the matrix Bernstein inequality \cite[Theorem 6.6.1]{tropp2015introduction}. We cite it here.
	\begin{lemma}\label{lem:MatrixBernstein}
		Let $X_1,\ldots,X_n\in \RR^{d\times d}$ be independent Hermitian random matrices. 
		Assume that $\Expt[X_n]=0$, $\|X_n\|\le L$, and denote
		\begin{align}\label{eq:lem:MatrixBernstein-Variance}
			\nu^2 = \left\| \sum_{n=1}^N \Expt[X_n^2] \right\| \,.
		\end{align}
		Then for all $t\ge 0$,
		\begin{align}
			\Pr\left( \left\| \sum_{n=1}^N X_n \right\| \ge t \right) \le d\exp\left(
   \frac{-t^2/2}{\nu^2 + Lt/3}
   \right) \,.
		\end{align}
	\end{lemma}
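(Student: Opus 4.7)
The plan is to use the matrix Laplace transform method, the natural extension of the scalar Chernoff--Cramer argument to the matrix setting. First I would reduce from operator norm to largest eigenvalue: since $\|\sum_n X_n\|\ge t$ iff $\lambda_{\max}(\pm\sum_n X_n)\ge t$, and $\{-X_n\}$ satisfies the same hypotheses as $\{X_n\}$, it suffices to prove a one-sided tail for $\lambda_{\max}$. Setting $Y=\sum_n X_n$, the starting point is the matrix Markov inequality: for any $\theta>0$,
\begin{equation*}
\Pr(\lambda_{\max}(Y)\ge t)\le e^{-\theta t}\,\Expt\,\tr\exp(\theta Y),
\end{equation*}
which uses $e^{\theta\lambda_{\max}(Y)}=\lambda_{\max}(e^{\theta Y})\le \tr(e^{\theta Y})$ (as $e^{\theta Y}\succeq 0$).

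The key technical step---and the main nontrivial input---is Lieb's concavity theorem: for any fixed Hermitian $H$, the map $A\mapsto \tr\exp(H+\log A)$ is concave on positive-definite matrices. Iterating it with conditional Jensen on the independent summands peels them off one at a time and yields
\begin{equation*}
\Expt\,\tr\exp(\theta Y)\;\le\;\tr\exp\Bigl(\sum_{n=1}^N \log\Expt\,e^{\theta X_n}\Bigr).
\end{equation*}
I would then establish a Bernstein-type bound on each matrix CGF. Applying the scalar inequality $e^x\le 1+x+\tfrac{x^2/2}{1-|x|/3}$ (valid for $|x|<3$) in the spectral functional calculus of $\theta X_n$ for $0<\theta<3/L$, together with $\Expt X_n=0$, gives $\Expt\,e^{\theta X_n}\preceq I+g(\theta)\Expt[X_n^2]\preceq \exp(g(\theta)\Expt[X_n^2])$ with $g(\theta)=\theta^2/(2(1-\theta L/3))$. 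Operator monotonicity of $\log$ on the PD cone then upgrades this to $\log\Expt\,e^{\theta X_n}\preceq g(\theta)\Expt[X_n^2]$.

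Summing these bounds, using monotonicity of $A\mapsto \tr\exp(A)$ on the Hermitian cone together with $\tr\exp(A)\le d\exp(\lambda_{\max}(A))$, and invoking the variance hypothesis $\|\sum_n\Expt X_n^2\|\le\nu^2$, produces $\Expt\,\tr\exp(\theta Y)\le d\exp(g(\theta)\nu^2)$. Substituting into the Laplace bound and minimizing $\theta t-g(\theta)\nu^2$ over $\theta\in(0,3/L)$ at the optimizer $\theta^\star=t/(\nu^2+Lt/3)$ delivers the stated exponent $t^2/(2(\nu^2+Lt/3))$. The only obstacle worth flagging is Lieb's theorem itself, a deep piece of noncommutative analysis; but as this line of argument is by now entirely standard, in practice I would simply cite Theorem~6.6.1 of \citep{tropp2015introduction} rather than reproduce the proof.
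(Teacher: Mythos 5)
Your sketch is the standard matrix Laplace transform argument (matrix Markov, Lieb's concavity theorem to subadditivize the CGFs, Bernstein bound on each matrix CGF, optimize $\theta$), which is precisely the proof in the cited reference; the paper itself does not prove this lemma but simply quotes it from \citep{tropp2015introduction}, exactly as you propose to do. The only cosmetic point is that your two-sided reduction via $\pm\sum_n X_n$ yields a prefactor $2d$ for the operator-norm tail rather than the stated $d$ (the factor $d$ is for the one-sided $\lambda_{\max}$ bound), a harmless discrepancy already present in the paper's statement.
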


	We shall apply Lemma~\ref{lem:MatrixBernstein} with a truncated version of $\m{E}$, $\tilde{\m{E}}_L=\m{E}\Indic{\|\m{E}\|\le L}-\Expt[\m{E}\Indic{\|\m{E}\|\le L}]$ for a suitably chosen $L$. 
	To this end, we give the following high-probability bound on $\|\Phi\|$.

	\begin{lemma}\label{lem:Avg-OpNormBound}
		Suppose that $S$ has i.i.d. Gaussian entries. For $\delta\in (0,1)$ and $q\ge 1$, set
		\begin{align}
			L_{\delta,q} = C \frac{d}{m} + C\frac{\log(q/\delta)}{m}
		\end{align}
		for large enough $C>0$. Then,
		\begin{enumerate}
			\item W.p. $1-\delta/q$, $\|\m{E}\|\le L_{\delta,q}$.
			\item $\Expt[\|\m{E}\|\Indic{\|\m{E}\|\ge L_{\delta,q}}] = \BigOh(\frac1m e^{-d})$.
			\item $\Expt[\|\m{E}\|^2\Indic{\|\m{E}\|\ge L_{\delta,q}}] = \BigOh(\frac{d}{m^2} e^{-d})$.
		\end{enumerate}
	\end{lemma}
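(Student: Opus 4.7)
The plan is to reduce the problem to the operator-norm concentration of the Gaussian sketch $S$ via a \emph{deterministic} spectral identity that controls $\|\m{E}\|$ purely in terms of $\|SS^\T\|$, with no dependence on $H$ or its conditioning. Once this reduction is in place, the three claims follow from the standard Gaussian matrix tail bound together with integration of the tail for the expectation estimates.

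For the deterministic reduction, set $A := (H+\lambda I)^{1/2}$, so that $B := I+\m{E} = AS^\T(SHS^\T + \hat{\lambda}I)^{-1}SA \succeq 0$ with $\rank B \le m$; in particular $\lambda_{\min}(\m{E})\ge -1$, hence $\|\m{E}\|\le\max\{1,\lambda_{\max}(B)-1\}$. Since $B$ and the $m\times m$ matrix
\[
K := (SHS^\T + \lambda SS^\T)(SHS^\T+\hat{\lambda}I)^{-1}
\]
share their non-zero eigenvalues (using $SAAS^\T = S(H+\lambda I)S^\T$), it suffices to bound $\lambda_{\max}(K)$. Writing $K = I + (\lambda SS^\T - \hat{\lambda}I)(SHS^\T+\hat{\lambda}I)^{-1}$ and conjugating by $P := (SHS^\T+\hat{\lambda}I)^{1/2}$ to symmetrize yields
\[
\lambda_{\max}(K) = \lambda_{\max}\bigl(I + P^{-1}(\lambda SS^\T - \hat{\lambda}I)P^{-1}\bigr) \le 1 + \frac{\lambda\|SS^\T\| + \hat{\lambda}}{\hat{\lambda}} = 2 + \frac{\lambda}{\hat{\lambda}}\|SS^\T\|.
\]
Algorithm~\ref{Alg:2} enforces $\hat{\lambda}\ge 5\lambda/12$ by construction, so $\lambda/\hat{\lambda}\le 12/5$, and combining gives the unconditional, $H$-free bound $\|\m{E}\|\le 1 + \tfrac{12}{5}\|SS^\T\|$.

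For the probabilistic step, I would invoke the standard Gaussian operator-norm tail $\Pr(\sqrt{m}\|S\|\ge\sqrt{d}+\sqrt{m}+t)\le 2e^{-t^2/2}$; squaring and simplifying gives, for universal $C_1,c_1>0$ and every $u\ge 0$, $\Pr(\|SS^\T\|\ge C_1(1+d/m) + u)\le 2e^{-c_1 mu}$. Combined with the deterministic bound, there exist universal $C_0,c_0>0$ such that $\Pr(\|\m{E}\|\ge L)\le 2\exp(-c_0 m L)$ for every $L\ge C_0(1+d/m)$. Choosing $C$ in the definition of $L_{\delta,q}$ sufficiently large guarantees both $L_{\delta,q}\ge C_0(1+d/m)$ and $c_0 m L_{\delta,q}\ge d+\log(q/\delta)$, which yields Part~1 immediately. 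For Parts~2 and~3 I integrate the tail,
\[
\Expt\bigl[\|\m{E}\|^{p}\Indic{\|\m{E}\|\ge L_{\delta,q}}\bigr] = L_{\delta,q}^{p}\Pr(\|\m{E}\|\ge L_{\delta,q}) + \int_{L_{\delta,q}}^{\infty} p\,t^{p-1}\Pr(\|\m{E}\|\ge t)\,dt,
\]
bound the integral by $O(L_{\delta,q}^{p-1}/m + 1/m^{p})\,e^{-c_0 m L_{\delta,q}}$, and use $L_{\delta,q}=O(d/m)$ (valid in the regime $\log(q/\delta)\le O(d)$ assumed in Theorem~\ref{thm:Avg-Gauss}) together with the freedom to enlarge $C$ so that $c_0 m L_{\delta,q}\ge (p+1)d$; this produces the stated rates $O(e^{-d}/m)$ and $O(d\,e^{-d}/m^{2})$.

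The technical heart of the argument---and the only real obstacle---is the $H$-free deterministic bound: a naive triangle inequality on $\m{E}=(H+\lambda I)^{1/2}\hat{W}(H+\lambda I)^{1/2}-I$ introduces the factor $\|H\|$ (equivalently the condition number of $H+\lambda I$), which would be fatal for the dimension-free goals of the paper. The resolution exploits that the non-zero spectrum of $B$ lives on the $m$-dimensional image of $S^\T$, so $(H,\lambda)$ can enter only through the already-sketched combination $SHS^\T + \lambda SS^\T$; conjugation by $(SHS^\T+\hat{\lambda}I)^{1/2}$ then reduces the problem to an $I$-plus-perturbation form in which $H$ cancels entirely and only $\|SS^\T\|$ survives.
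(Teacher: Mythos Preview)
Your proof is correct. Both your argument and the paper's hinge on the same idea: obtain a deterministic, $H$-free bound of the form $\|\m{E}\|\le a + b\|S^\T S\|$, then invoke standard Gaussian operator-norm concentration and integrate the tail. The difference is in how the deterministic reduction is carried out. The paper uses the domination $(H+\lambda I)^{1/2}\preceq H^{1/2}+\lambda^{1/2}I$ (via $\sqrt{a+b}\le\sqrt{a}+\sqrt{b}$) and then splits $(H^{1/2}+\lambda^{1/2}I)\hat W(H^{1/2}+\lambda^{1/2}I)$ into two pieces: $H^{1/2}\hat W H^{1/2}$, which is bounded by a resolvent identity, and $\lambda\hat W$, which is bounded by $\tfrac{\lambda}{\hat\lambda}\|S^\T S\|$; this yields $\|\m{E}\|\le 5+\tfrac{24}{5}\|S^\T S\|$. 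Your route---passing to the $m\times m$ companion matrix $K=(SHS^\T+\lambda SS^\T)(SHS^\T+\hat\lambda I)^{-1}$ via the $XY\sim YX$ eigenvalue swap, then symmetrizing by conjugation with $(SHS^\T+\hat\lambda I)^{1/2}$---is arguably cleaner and gives the sharper constant $\|\m{E}\|\le 1+\tfrac{12}{5}\|SS^\T\|$. The tail integration for Items~2 and~3 is essentially identical in both arguments, and your invocation of the regime $\log(q/\delta)=O(d)$ (so that $L_{\delta,q}=O(d/m)$) matches the standing assumptions of Theorem~\ref{thm:Avg-Gauss}.
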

	We prove Lemma~\ref{lem:Avg-OpNormBound} in Section~\ref{sec:proof-lem:Avg-OpNormBound}.

	The last component needed to apply matrix Bernstein is the variance proxy \eqref{eq:lem:MatrixBernstein-Variance}. Note that we always have $\nu^2 \le L^2N$, so that 
	\begin{align}
		\Pr\left( \left\| \frac1N \sum_{n=1}^N X_n \right\| \ge \eps \right) \le d\exp(-cN \min\left\{\left(\frac{\eps}{L}\right)^2,\frac{\eps}{L}\right\}) \,.
	\end{align}
	However, if the variance $\Expt[X^2] \ll L^2 $, one may obtain substantially sharper bounds for small $\eps\ll L$. 
	This is the case in our setting. The following is proved in Section~\ref{sec:proof-lem:Expt-Matrix-Covariance}.
	\begin{lemma}\label{lem:Expt-Matrix-Covariance}
		Suppose that $S$ has i.i.d. Gaussian entries. Assume that $m\ge 1.5\DimEff{\lambda}$. Then 
		\[
		\Expt[\m{E}^2] = \BigOh(d/m) \,.	
		\]
	\end{lemma}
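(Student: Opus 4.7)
The plan is to bound $\|\Expt\m{E}^2\|$ via the decomposition $\Expt\m{E}^2 = (\Expt\m{E})^2 + \mathrm{Var}(\m{E})$, where $\mathrm{Var}(\m{E}) := \Expt[(\m{E}-\Expt\m{E})^2]$ is the matrix variance. By Lemma~\ref{lem:Expt-Matrix-Gauss}, $\|(\Expt\m{E})^2\| = O(1/m) \le O(d/m)$, so everything reduces to bounding the matrix variance. My approach is to (a) replace the random $\hat\lambda$ by the deterministic $\tilde\lambda$ at negligible cost, and (b) apply the matrix Efron--Stein inequality to the resulting quantity viewed as a function of the independent rows of $S$.

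Step 1 (reduction to deterministic $\tilde\lambda$): Define
\[
\hat W_0 := S^\T(SHS^\T + \tilde\lambda I)^{-1}S,\qquad \m{E}_0 := (H+\lambda I)^{1/2}\hat W_0 (H+\lambda I)^{1/2} - I,
\]
and write $\m{E} = \m{E}_0 + \Delta$. The first resolvent identity gives $\hat W - \hat W_0 = (\tilde\lambda-\hat\lambda)\,S^\T(SHS^\T+\hat\lambda I)^{-1}(SHS^\T+\tilde\lambda I)^{-1}S$, from which $\|\Delta\| \lesssim |\hat\lambda-\tilde\lambda|/\tilde\lambda$. Applying Lemma~\ref{lem:thm:Alg-2-Gauss-Expt} with $p=2$ gives $\Expt[\|\Delta\|^2] = O(1/m)$. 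Since $\m{E}^2 \preceq 2\m{E}_0^2 + 2\Delta^2$ and $\Expt[\Delta^2] \preceq \Expt[\|\Delta\|^2]\,I$, it suffices to bound $\|\Expt\m{E}_0^2\|$ and by the above decomposition, $\|\mathrm{Var}(\m{E}_0)\|$.

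Step 2 (matrix Efron--Stein): Let $r_1,\ldots,r_m$ denote the independent rows of $S$, so that $\m{E}_0 = f(r_1,\ldots,r_m)$. Let $\m{E}_0^{(a)}$ be $\m{E}_0$ with $r_a$ replaced by an independent copy $r_a'$. The matrix Efron--Stein inequality (Paulin--Mackey--Tropp) yields
\[
\mathrm{Var}(\m{E}_0) \preceq \tfrac{1}{2}\sum_{a=1}^m \Expt\bigl[(\m{E}_0 - \m{E}_0^{(a)})^2\bigr].
\]
Changing $r_a \to r_a'$ perturbs $M = SHS^\T + \tilde\lambda I$ by a rank-$2$ matrix (in row/column $a$). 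By the Sherman--Morrison--Woodbury formula, $M^{-1}$ and hence $\hat W_0$ change by an explicit rank-$O(1)$ matrix expressible in terms of $r_a, r_a'$, and the leave-$a$-out resolvent $M_{\setminus a}^{-1}$ (which is independent of $r_a, r_a'$). Taking the square of this rank-$O(1)$ update and expectations, one shows the Gaussian calculation yields $\Expt[(\m{E}_0 - \m{E}_0^{(a)})^2] \preceq C\,\tfrac{d}{m^2}\,I$ in operator norm. Summing over $a$ gives $\|\mathrm{Var}(\m{E}_0)\| = O(d/m)$, and combined with Step 1 this completes the bound.

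The main obstacle is the per-term estimate $\Expt[(\m{E}_0 - \m{E}_0^{(a)})^2] = O(d/m^2)$. The factor $d$ comes from the fact that each rank-$O(1)$ update involves inner products $r_a^\T H^{1/2}(\cdot)H^{1/2}r_a$ whose typical size is $\frac{1}{m}\tr(H(\cdot))$, and when squared contributes $d/m^2$ through Gaussian fourth moments. A sanity check in the $H=0$ case (where $\m{E}_0 = S^\T S - I$) gives exactly $\Expt[(\m{E}_0 - \m{E}_0^{(a)})^2] = 2(d+2)/m^2\cdot I$ (from Wishart moment identities), which matches both the claimed $O(d/m^2)$ bound and, upon summing, the sharp $(d+1)/m$ constant seen in Step 4 of our earlier heuristic. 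The technical work is to control the expectation in the general-$H$ case, which requires isolating the rank-two update cleanly (using the $\sigma$-algebra generated by $\{r_b\}_{b\ne a}$) and using Gaussian moment bounds together with $\|M_{\setminus a}^{-1}\| \le \tilde\lambda^{-1}$.
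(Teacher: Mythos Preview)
Your approach is genuinely different from the paper's---the paper works entirely in the eigenbasis $V$ of $H$, using the independence and sign symmetry of the columns $s_\ell=Sv_\ell$ to show $\Expt[(V^\T\m{E}V)^2]$ is diagonal, and then bounds each diagonal entry directly with the random $\hat\lambda$ kept throughout. Your row-based Efron--Stein idea is natural, but Step~1 contains a real error that breaks the argument.

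The claimed inequality $\|\Delta\|\lesssim |\hat\lambda-\tilde\lambda|/\tilde\lambda$ is false. From the resolvent identity you get
\[
\Delta=(\tilde\lambda-\hat\lambda)\,(H+\lambda I)^{1/2}S^\T T(\hat\lambda)T(\tilde\lambda)S(H+\lambda I)^{1/2},
\]
and since $T(\hat\lambda)T(\tilde\lambda)\preceq \hat\lambda^{-1}T(\tilde\lambda)$ this yields only
\[
\|\Delta\|\le \frac{|\hat\lambda-\tilde\lambda|}{\hat\lambda}\,\|I+\m{E}_0\|\;\lesssim\;\frac{|\hat\lambda-\tilde\lambda|}{\lambda}\bigl(1+\|S^\T S\|\bigr).
\]
The factor $1+\|S^\T S\|$ cannot be dropped: when $H$ is very degenerate (e.g.\ low effective rank) and $m\ll d$, one has $\|S^\T S\|\asymp d/m$ and indeed $\|\m{E}_0\|\asymp d/m$ (this is precisely what Lemma~\ref{lem:Avg-OpNormBound} reflects). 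Consequently, bounding $\Expt[\Delta^2]\preceq\Expt[\|\Delta\|^2]I$ together with Lemma~\ref{lem:thm:Alg-2-Gauss-Expt} only gives $\Expt[\|\Delta\|^2]\lesssim m^{-1}(d/m)^2=d^2/m^3$, which is \emph{not} $O(d/m)$ once $m<\sqrt{d}$---a regime fully allowed by the hypothesis $m\ge 1.5\,\EffDim(\lambda)$. The same obstacle reappears if you skip Step~1 and apply Efron--Stein directly to $\m{E}$: the perturbation of $\hat\lambda$ under a single-row swap contributes a term of order $m^{-1}(1+\|S^\T S\|)$ to $\m{E}-\m{E}^{(a)}$, whose squared operator norm, summed over $a$, again gives $d^2/m^3$.

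The underlying issue is that you are passing to operator norm too early: $\Expt[\|\m{E}\|^2]\asymp (d/m)^2$ while $\|\Expt[\m{E}^2]\|=O(d/m)$, so any argument that controls $\Expt[A^2]$ through $\Expt[\|A\|^2]$ will overshoot. The paper avoids this by never taking an operator norm of a single-sample quantity; it computes entries $v_k^\top\m{E}v_\ell$ via the leave-one-out (in the $\ell$ direction) identity \eqref{eq:sec:proof-lem:Expt-Matrix-Gauss-aux-1}, sums the squares over $\ell$ \emph{before} taking expectations, and then exploits that the summands organize into a quadratic form $s_k^\T T_{\setminus k}(\hat\lambda)\bigl[\sum_{\ell\ne k}(\tau_\ell+\lambda)s_\ell s_\ell^\T\bigr]T_{\setminus k}(\hat\lambda)s_k$ whose expectation is $O(d/m)$. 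If you want to rescue the Efron--Stein route, you would similarly need to bound $\Expt[(\m{E}_0-\m{E}_0^{(a)})^2]$ as a matrix (not via its norm), which is substantially more work than your proposal indicates.
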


	With the above, we are ready to conclude the proof of Theorem~\ref{thm:Avg-Gauss}.
	\begin{proof}
		(Of Theorem~\ref{thm:Avg-Gauss}.)
  
	Consider the truncated matrices $\tilde{\m{E}}^{(1)}_{L_{\delta,q}},\ldots,\tilde{\m{E}}^{(q)}_{L_{\delta,q}}$ with $L_{\delta,q}$ as in Lemma~\ref{lem:Avg-OpNormBound}. By Item 1 of  Lemma~\ref{lem:Avg-OpNormBound} (and union bound over $\ell=1,\ldots,q$), with probability $1-\delta$, we have 
	\begin{align}
		\bar{\m{E}} = \frac{1}{q} \sum_{\ell=1}^q \tilde{\m{E}}^{(\ell)}_{L_{\delta,q}} + \Expt\left[ {\m{E}} \Indic{\|\m{E}\|\le L_{\delta,q}} \right] \,.
	\end{align}
	
	Note that 
	\begin{align}
		\left\|
			\Expt\left[ {\m{E}} \Indic{\|\m{E}\|\le L_{\delta,q}} \right]
		\right\|
		=
		\left\|
			\Expt[\m{E}] +	
			\Expt\left[ {\m{E}} \Indic{\|\m{E}\|>  L_{\delta,q}} \right]
		\right\|
		\le 
		\|\Expt[\m{E}]\| + \Expt\left[ \|\m{E}\|\Indic{\|\m{E}\|>  L_{\delta,q}} \right] = \BigOh(1/\sqrt{m})\,,
	\end{align}
	where the first inequality uses Jensen's inequality, and the second inequality follows from Lemma~\ref{lem:Avg-OpNormBound} Item 2 and Lemma~\ref{lem:Expt-Matrix-Gauss}. Similarly,
	\begin{align}
		\left\| \Expt\left[ \tilde{\m{E}}_{L_{\delta,q}}^2 \right] \right\|
		\lesssim
		\left\| \Expt\left[ {\m{E}}^2 \right] \right\| +  \Expt\left[ \|\m{E}\|^2 \Indic{\|\m{E}\|>  L_{\delta,q}} \right] = \BigOh(d/m)\,, 
	\end{align}
	where we used Lemma~\ref{lem:Expt-Matrix-Covariance}. 

	Now, by the matrix Bernstein inequality (Lemma~\ref{lem:MatrixBernstein}), the bound 
	\begin{align}
		\|\bar{\m{E}}\| \le \eps + \BigOh(1/\sqrt{m})
	\end{align}
	holds with probability at least
	\begin{align}
		1-d\exp\left( 
  -cq\min\{\frac{\eps^2}{d/m}, \frac{\eps}{L_{p,q}}\} 
  \right) \,.
	\end{align}
	When $q\le \exp(\BigOh(d))$ and $1/\delta\le \exp(\BigOh(d))$ we have $L_{\delta,q}=\BigOh(d/m)$, so for $\eps=\BigOh(1)$ the smaller term is $\frac{\eps^2}{d/m}$. And so, $q\gtrsim \frac{d\log d}{m}\frac{\log(1/\delta)}{\eps^2}$ ensures that the probability above is $\ge 1-\delta$. 

\end{proof}

\section{Proof of Lemma~\ref{lem:Expt-Matrix-Gauss}}
\label{sec:proof-lem:Expt-Matrix-Gauss}

\begin{proof}
	[\unskip\nopunct]

We use the machinery from Section~\ref{sec:proof-thm:Stiel-NonAsymp-Gaussian}. As before, we denote the eigendecomposition of $H$,
\begin{align*}
	H = V\diag(\tau_1,\ldots,\tau_d)V^\T,\qquad V=[v_1|\cdots|v_d]
\end{align*}
 and 
\begin{align*}
	H_{\setminus \ell} = H - \tau_\ell v_\ell v_\ell^\T\,,\qquad s_\ell = S v_\ell,\qquad s_1,\ldots,s_d \IIDDist \m{N}(0,m^{-1}I_m)\,.
\end{align*}
We shall compute the expectation of $\hat{W}=S^\T(S H S^\T +  I)^{-1}S$ in the eigenbasis $V$, that is, $\Expt[V^\T \hW V]$.

Denote, for brevity,
\begin{align*}
	T(\hl) = (SHS^\T + \hl I)^{-1},\qquad \Tell(\hl) = (S\Hell S^\T + \hl I)^{-1} \,,
\end{align*}
so that by the Sherman-Morrison formula (Lemma~\ref{lem:Sherman-Morrison}), writing $SHS^\T = S\Hell S^\T + \tau_\ell s_\ell s_\ell^\T$,
\begin{align}\label{eq:sec:proof-lem:Expt-Matrix-Gauss-aux-1}
	T(\hl) = \Tell(\hl) - \frac{\tau_\ell \Tell(\hl)s_\ell s_\ell \Tell(\hl)}{1 + \tau_\ell s_\ell \Tell(\hl) s_\ell } \,.
\end{align}
Let $j\ne \ell$. Note that $\hl,\Tell(\hl)$ do not change under a sign flip $s_j\mapsto -s_j$. Since $s_j$ has a symmetric distribution ($s_j\overset{d}{=}-s_j$), we deduce that 
\begin{align}
	\Expt[v_\ell^\T \hW v_j] = \Expt[s_\ell^\T T(\hl) s_j] = 0 \,,
\end{align} 
that is, $\Expt[V^\T \hW V]$ is diagonal. Consequently, $\Expt[V^\T \m{E} V] = \Expt[V^\T(H+\lambda I)^{1/2}\hW (H+\lambda I)^{1/2} V]$ is diagonal as well (since $V$ is an eigenbasis of $H$).

Let us calculate the diagonal elements. By \eqref{eq:sec:proof-lem:Expt-Matrix-Gauss-aux-1},
\begin{align}
	v_\ell^\T \hW v_\ell = s_\ell^\T T(\hl) s_\ell 
	= \frac{s_\ell^\T \Tell(\hl) s_\ell}{1 + \tau_\ell s_\ell^\T \Tell(\hl) s_\ell} =
	\frac{1}{(s_\ell^\T \Tell(\hl) s_\ell)^{-1} + \tau_\ell}\,,
\end{align}
so 
\begin{align}
	v_\ell^\T \Phi v_\ell 
	&= \frac{\lambda + \tau_\ell}{(s_\ell^\T \Tell(\hl) s_\ell)^{-1} + \tau_\ell} - 1 \nonumber\\
	&= \frac{\lambda - (s_\ell^\T \Tell(\hl) s_\ell)^{-1}}{ (s_\ell^\T \Tell(\hl) s_\ell)^{-1} + \tau_\ell } \nonumber \\
	&= \BigOh(|\lambda s_\ell^\T \Tell(\hl) s_\ell - 1|) \,.
	\label{eq:sec:proof-lem:Expt-Matrix-Gauss-aux-2}
\end{align}

Let $\tilde{\lambda}$ be given by \eqref{eq:Oracle-Reg}, so that by Theorem~\ref{thm:Alg2-Gauss}, $\hl$ is close to $\tilde{\lambda}$ with high probability. Let
\begin{align*}
	\Delta_1 &= \lambda s_\ell^\T \Tell(\hl)s_\ell - \lambda s_\ell^\T \Tell(\tilde{\lambda})s_\ell\,,\\
	\Delta_2 &= \lambda s_\ell^\T \Tell(\tilde{\lambda})s_\ell - \lambda \frac1m \tr \Tell(\tilde{\lambda})\,, \\
	\Delta_3 &= \lambda \frac1m \tr \Tell(\tilde{\lambda}) - \lambda \frac1m \tr T(\tilde{\lambda}) \,,\\
	\Delta_4 &= \lambda \frac1m \tr T(\tilde{\lambda}) - \lambda \frac1m \tr T(\hl) = \lambda \frac1m \tr T(\tilde{\lambda}) - 1 \,,
\end{align*}
so that 
\[
	\lambda s_\ell^\T \Tell(\hl) s_\ell - 1 = \Delta_1 + \Delta_2 + \Delta_3 + \Delta_4 \,.
\]

We have 
\begin{align*}
	\Expt|\Delta_1| 
	&= \Expt[ \lambda |s_\ell^\T \Tell(\hl)\Tell(\tilde{\lambda}) s_\ell | |\hl-\tilde{\lambda}| ] 
	= \Expt\left[ \frac{\lambda}{\hl \tilde{\lambda}} \|s_\ell\|^2 |\hl-\tilde{\lambda}| \right] 
	\lesssim \frac{1}{\lambda}(\Expt\|s_\ell\|^4\|)^{1/2}(\Expt|\hl-\tilde{\lambda}|^2)^{1/2} 
	\lesssim \frac{1}{\sqrt{m}} \,, \\
	\Expt|\Delta_3| 
	&= \lambda \frac1m \Expt[\tr(\Tell(\hl)\Tell(\tilde{\lambda}))|\hl-\tilde{\lambda}|] 
	\le \Expt\left[ \frac{\lambda }{\hl \tilde{\lambda}} |\hl-\tilde{\lambda}| \right] \lesssim \frac{1}{\sqrt{m}} \,,\\
	\Expt|\Delta_4| &\lesssim \frac{1}{\sqrt{m}}
\end{align*}
where we used Lemma~\ref{lem:thm:Alg-2-Gauss-Expt} and that $\hl=\Omega(\lambda)$ w.p. $1$. As for $\Delta_2$, by Lemma~\ref{lem:HW-Properties} Item 1, 
\begin{align*}
	\Expt|\Delta_2| \lesssim \lambda \frac1m  (\Expt \|\Tell(\tilde{\lambda})\|^2_F)^{1/2} \lesssim \frac{1}{\sqrt{m}} \,.
\end{align*}
Thus, we conclude that $\|\Expt\m{E}\|=\|\diag(\Expt V^\T \m{E} V)\| = \BigOh(\frac{1}{\sqrt{m}})$.

\end{proof}

\section{Proof of Lemma~\ref{lem:Avg-OpNormBound}}
\label{sec:proof-lem:Avg-OpNormBound}

\begin{proof}
	[\unskip\nopunct]

	Clearly, $\|\m{E}\|\le 1 + \|(H+\lambda I)^{1/2}\hW (H+\lambda I)^{1/2}\|$. By the inequality $\sqrt{a+b}\le \sqrt{a}+\sqrt{b}$ (for $a,b\ge 0$), we have $\|(H+\lambda I)^{1/2}(H^{1/2} + \lambda^{1/2}I)^{-1}\| \le 1$, so
		\begin{align*}
			\|\Phi\| \le \|(H^{1/2}+\lambda^{1/2})\hW (H^{1/2}+\lambda^{1/2}) \| \le 2\|H^{1/2}\hW H^{1/2}\| + 2\lambda \|\hW\| \,,
		\end{align*}
		where the second inequality follows from the fact that $\hW$ is PSD, and therefore for any $u,v$, $(u+v)\hW (u+v) = \|u+v\|_{\hW}^2 \le 2\|u\|_{\hW}^2 + 2\|v\|_{\hW}^2$ where $\|u\|_{\hW}^2:= u^\T\hW u$ is a norm. 
		We have 
		\begin{align*}
			H^{1/2}\hW H^{1/2}=H^{1/2}S^\T(SHS^\T + \hl I)^{-1}S H^{1/2} 
   &= (H^{1/2}S^\T S H^{1/2} + \hl I)^{-1}H^{1/2}S^\T S H^{1/2} \\
   &= I - \hl (H^{1/2}S^\T S H^{1/2} + \hl)^{-1}\,,
		\end{align*}
		so that $\|H^{1/2}\hW H^{1/2}\| \le 2$. 
		Moreover, $\|\hW\| = \|S^\T (SHS^\T +\hl I)^{-1} S\| \le \frac{1}{\hl}\|S^\T S\|$. 
		Recalling that $\hl \ge 5\lambda/12$ (by construction), 
		$\lambda \|\hW\|\le \frac{12}{5}\|S^\T S\|$. Combining all the above, $\|\m{E}\|\le 5 + \frac{24}{5}\|S^\T S\|$. 
		
		By \cite[Theorem 4.6.1]{vershynin2018high}, for every $t\ge 0$, w.p. $1-2e^{-t}$,
		\begin{align*}
			\|S^\T S\| \le 1 + C_1 \max\{\mu,\mu^2\}\quad\textrm{where}\quad \mu = \sqrt{\frac{d}{m}} + \sqrt{\frac{t}{{m}}} \,.
		\end{align*}
		for $C_1$ large enough. 
		Since $m\le d$, note that $\max\{\mu,\mu^2\}=\mu^2 \le 2\frac{d}{m} + 2\frac{t}{m}$. Item 1 follows by setting $t\sim \log(q/\delta)$.

		As for Item 2, 
		\begin{align*}
			\Expt[\|S^\T S\|\Indic{\|S^\T S\|>1+4C_1\frac{d}{m}}] 
			&= \int_{1+4C_1\frac{d}{m}} ^\infty \Pr(\|S^\T S\|\ge s)ds \\
			&= 2C_1 \frac{1}{m}\int_{d}^\infty \Pr(\|S^\T S\|\ge 1 + 2C_1\frac{d}{m} + 2C_1 \frac{t}{m})dt \\
			&\lesssim \frac{1}{m}\int_{d}^{\infty}e^{-t}dt = \frac{1}{m}e^{-d} \,.
		\end{align*}
		Deducing Item 2 of the lemma is now easy from the above estimate.

		Finally, for Item 3, 
		\begin{align*}
			&\Expt[\|S^\T S\|^2\Indic{\|S^\T S\|>1+4C_1\frac{d}{m}}] 
			= \int_{1+4C_1\frac{d}{m}} ^\infty 2s \Pr(\|S^\T S\|^2\ge s^2)ds \\
			&\qquad= 2C_1 \frac{1}{m}\int_{d}^\infty \left( 1 + 2C_1\frac{d}{m} + 2C_1 \frac{t}{m} \right)\Pr(\|S^\T S\|\ge 1 + 2C_1\frac{d}{m} + 2C_1 \frac{t}{m})dt \\
			&\qquad\lesssim \frac{1}{m}\int_{d}^{\infty} \left( 1 + 2C_1\frac{d}{m} + 2C_1 \frac{t}{m} \right)e^{-t}dt = \BigOh(\frac{d}{m^2}e^{-d}) \,.
		\end{align*}
\end{proof}

\section{Proof of Lemma~\ref{lem:Expt-Matrix-Covariance}}
\label{sec:proof-lem:Expt-Matrix-Covariance}

\begin{proof}
	[\unskip\nopunct]

	We continue from Section~\ref{sec:proof-lem:Expt-Matrix-Gauss}, expressing $\m{E}$ in the eigenbasis of $H$. 

	Recall that $\m{E}=(H+\lambda)^{1/2}S^\T T(\hl) S (H+\lambda)^{1/2} - I$.  
	Then starting from \eqref{eq:sec:proof-lem:Expt-Matrix-Gauss-aux-1},
	we have for any $\ell,j=1,\ldots,d$,
	\begin{align}
		v_\ell^\T \m{E} v_j 
		=
		(\tau_\ell+\lambda)^{1/2}(\tau_j+\lambda)^{1/2} \frac{ s_j \Tell(\hl)s_\ell}{1+\tau_\ell s_\ell \Tell(\hl)s_\ell} - \Indic{\ell=j}\,.
		\label{eq:proof-lem:Expt-Matrix-Covariance-axu-1}
	\end{align}
	Consequently, if $j\ne k$ then note that $\Expt[v_j^\T \m{E} v_\ell v_k^\T \m{E} v_\ell ] = 0$, since $s_j,s_k$ have a symmetric distribution.

	Recall, we are interested in bounding $\|\Expt[\m{E}^2]\|=\|\Expt[(V^\T \m{E} V)^2]\|$. We have 
	\begin{align*}
		\Expt[(V^\T \m{E} V)^2_{j,k}] = \sum_{\ell=1}^d \Expt[ (V^\T \m{E} V)_{j,\ell}(V^\T \m{E} V)_{k,\ell} ]\,,
	\end{align*}
	which is zero when $j\ne k$; that is, $\Expt[(V^\T \m{E} V)^2]$ is diagonal. Let us compute the diagonal elements, $j=k$. We have 
	\begin{align}
		\Expt[(V^\T \m{E} V)^2_{k,k}]
		&= \Expt[ (V^\T \m{E} V)_{k,k}(V^\T \m{E} V)_{k,k} ] + \sum_{\ell\ne k} \Expt[ (V^\T \m{E} V)_{k,\ell}(V^\T \m{E} V)_{k,\ell} ] \,.
	\end{align}
	The first term is
	\begin{align}
		\Expt[ (V^\T \Phi V)_{k,k}^2 ] 
		&= \Expt\left[ \left( \frac{\lambda - (s_\ell^\T \Tell(\hl) s_\ell)^{-1}}{ (s_\ell^\T \Tell(\hl) s_\ell)^{-1} + \tau_\ell } \right)^2 \right] \nonumber \\
		&= \BigOh(|\lambda s_\ell^\T \Tell(\hl) s_\ell - 1|^2)\,,
	\end{align}
	where we used \eqref{eq:sec:proof-lem:Expt-Matrix-Gauss-aux-2}. Similar to Section~\ref{sec:proof-lem:Expt-Matrix-Gauss}, one can show this is $\BigOh(1/m)$. For the second term, using  \eqref{eq:proof-lem:Expt-Matrix-Covariance-axu-1},
	\begin{align}
		&\sum_{\ell=1,\ell\ne k}^d  (V^\T \m{E} V)_{k,\ell}(V^\T \m{E} V)_{k,\ell} 
		= 
		\sum_{\ell=1,\ell\ne k}^d 
		(\tau_k+\lambda)(\tau_\ell+\lambda)
		\frac{(s_k \Tk(\hl)s_\ell)^2}{(1+\tau_k s_k^\T \Tk(\hl)s_k)^2} \nonumber \\
		&\qquad=
		(\tau_k+\lambda)
		\frac{1 }{(1+\tau_k s_k \Tk(\hl)s_k)^2}
		s_k^\T \Tk(\hl)
		\left[
		\sum_{\ell=1,\ell\ne k}^d 
		(\tau_\ell+\lambda)
		s_\ell s_\ell^\T 
		\right]
		\Tk(\hl) s_k  \,.
		\label{eq:proof-lem:Expt-Matrix-Covariance-axu-2}
	\end{align}

	For small $c'>0$, let $\Omega$ be the event that $ s_k^\T \Tk(\hl) s_k \ge c'/\lambda$;  by similar calculations as in Section~\ref{sec:proof-lem:Expt-Matrix-Gauss}, for some $c,C>0$, $\Pr(\Omega^c)\le Ce^{-cm}$. Then
	\begin{align*}
		\sum_{\ell=1,\ell\ne k}^d  (V^\T \m{E} V)_{k,\ell}(V^\T \m{E} V)_{k,\ell} \Indic{\Omega}
		&\lesssim 
		\frac{\lambda^2}{\tau_k + \lambda} 
		s_k^\T \Tk(\hl)
		\left[
		\sum_{\ell=1,\ell\ne k}^d 
		(\tau_\ell+\lambda)
		s_\ell s_\ell^\T 
		\right]
		\Tk(\hl) s_k
		\Indic{\Omega} \\ 
		&\le 
		\frac{\lambda^2}{\tau_k + \lambda} 
		s_k^\T \Tk(\hl)
		\left[
		\sum_{\ell=1,\ell\ne k}^d 
		(\tau_\ell+\lambda)
		s_\ell s_\ell^\T 
		\right]
		\Tk(\hl) s_k\,,
	\end{align*}
	so taking the expectation,
	\begin{align*}
		\Expt\left[ \sum_{\ell=1,\ell\ne k}^d  (V^\T \m{E} V)_{k,\ell}(V^\T \m{E} V)_{k,\ell} \Indic{\Omega} \right]
		&\lesssim
		\frac{\lambda^2}{\tau_k + \lambda} 
		\frac1m \Expt \tr \left(
			\Tk(\hl)
			\left[
			\sum_{\ell=1,\ell\ne k}^d 
			(\tau_\ell+\lambda)
			s_\ell s_\ell^\T 
			\right]
			\Tk(\hl)
		\right) \\
		&\lesssim
		\frac{\lambda}{\tau_k + \lambda} 
		\frac1m \Expt \tr \left(
			\left[
			\sum_{\ell=1,\ell\ne k}^d 
			(\tau_\ell+\lambda)
			s_\ell s_\ell^\T 
			\right]
			\Tk(\hl)
		\right) \\
		&\le 
		\frac1m 
			\sum_{\ell=1,\ell\ne k}^d 
			(\tau_\ell+\lambda)
			\Expt  s_\ell^\T \Tk(\hl) s_\ell \,.
	\end{align*}
	Using \eqref{eq:sec:proof-lem:Expt-Matrix-Gauss-aux-1},
	\begin{align}
		\Expt  s_\ell^\T \Tk(\hl) s_\ell = 
             \Expt
			 \left[ 
				\frac{s_\ell^\T T_{\setminus k,\ell}(\hl) s_\ell}{1+\tau_\ell s_\ell^\T T_{\setminus k,\ell}(\hl) s_\ell} 
				\right]
			 \le 
			 \frac{\Expt[s_\ell^\T T_{\setminus k,\ell}(\hl) s_\ell]}{1+\tau_\ell \Expt[s_\ell^\T T_{\setminus k,\ell}(\hl) s_\ell]} \,,
			 \label{eq:proof-lem:Expt-Matrix-Covariance-axu-4}
	\end{align}
	where we used Jensen's inequality with the concave function $x\mapsto \frac{x}{1+\tau_\ell x}$, $x>0$. By a calculation similar to Section~\ref{sec:proof-lem:Expt-Matrix-Gauss}, $\Expt[s_\ell^\T T_{\setminus k,\ell}(\hl) s_\ell]= \frac{1}{\lambda} + \BigOh(\frac{1}{\lambda\sqrt{m}})$, hence 
	\begin{align*}
		\Expt  s_\ell^\T \Tk(\hl) s_\ell \le  \frac{\Expt[s_\ell^\T T_{\setminus k,\ell}(\hl) s_\ell]}{1+\tau_\ell \Expt[s_\ell^\T T_{\setminus k,\ell}(\hl) s_\ell]} \lesssim \frac{1}{\tau_\ell + \lambda } \,.
	\end{align*}
	Consequently,
	\begin{align}
		\Expt\left[ \sum_{\ell=1,\ell\ne k}^d  (V^\T \m{E} V)_{k,\ell}(V^\T \m{E} V)_{k,\ell} \Indic{\Omega} \right]
		&\lesssim
  \frac1m 
			\sum_{\ell=1,\ell\ne k}^d 
			(\tau_\ell+\lambda)
			\Expt  s_\ell^\T \Tk(\hl) s_\ell
  \lesssim \frac{d}{m} \,.
	\end{align}
 
	Finally, towards bounding the expectation under the complement event ${\Omega^c}$ note we can write
	\begin{align}
		\sum_{\ell=1,\ell\ne k}^d  (V^\T \m{E} V)_{k,\ell}(V^\T \m{E} V)_{k,\ell} 
		&=
		(\tau_k+\lambda)
		\frac{1 }{(1+\tau_k s_k \Tk(\hl)s_k)^2}
		\sum_{\ell=1,\ell\ne k}^d 
		(\tau_\ell+\lambda)
		(s_\ell^\T \Tk(\hl) s_k)^2		\nonumber \\
		&\le 
		(\tau_k+\lambda)
		\frac{s_k^\T  \Tk(\hl)s_k }{(1+\tau_k s_k \Tk(\hl)s_k)^2}
		\sum_{\ell=1,\ell\ne k}^d 
		(\tau_\ell+\lambda)
		s_\ell^\T \Tk(\hl) s_\ell	\nonumber \\
		&\le 
		(\tau_k+\lambda)
		\frac{s_k^\T  \Tk(\hl)s_k }{1+\tau_k s_k \Tk(\hl)s_k}
		\sum_{\ell=1,\ell\ne k}^d 
		(\tau_\ell+\lambda)
		s_\ell^\T \Tk(\hl) s_\ell
		\,,	
		\label{eq:proof-lem:Expt-Matrix-Covariance-axu-3}
	\end{align}
	where the first inequality, $(s_\ell^\T \Tk(\hl) s_k)^2 \le (s_\ell^\T \Tk(\hl) s_\ell)(s_k^\T \Tk(\hl) s_k)$, follows by Cauchy-Schwartz. We have
	\begin{align*}
		(\tau_k+\lambda)
		\frac{s_k^\T  \Tk(\hl)s_k }{1+\tau_k s_k \Tk(\hl)s_k} 
		&=
		\frac{\tau_k + \lambda}{\tau_k + (s_k^\T  \Tk(\hl)s_k)^{-1}} \\
		&= 1 + \frac{\lambda - (s_k^\T  \Tk(\hl)s_k)^{-1}}{\tau_k + (s_k^\T  \Tk(\hl)s_k)^{-1}}\\
		&\le 1 + |\lambda (s_k^\T  \Tk(\hl)s_k)-1| \,.
	\end{align*}
	Recall $\Omega$, the event that $\lambda s_k^\T \Tk(\hl) s_k \ge c'$;  we now need to bound $\Expt[\sum_{\ell=1,\ell\ne k}^d  (V^\T \m{E} V)_{k,\ell}(V^\T \m{E} V)_{k,\ell}\Indic{\Omega^c}]$. Under $\Omega^c$, $0\le \lambda s_k^\T \Tk(\hl) <  c'$, and so $(\tau_k+\lambda)
	\frac{s_k^\T  \Tk(\hl)s_k }{1+\tau_k s_k \Tk(\hl)s_k}\Indic{\Omega^c}=\BigOh(1)$. Plugging this into \eqref{eq:proof-lem:Expt-Matrix-Covariance-axu-3},
	\begin{align}
		\Expt[\sum_{\ell=1,\ell\ne k}^d  (V^\T \m{E} V)_{k,\ell}(V^\T \m{E} V)_{k,\ell} \Indic{\Omega^c}]
		\lesssim 
		\sum_{\ell=1,\ell\ne k}^d 
		(\tau_\ell+\lambda)
		\Expt[s_\ell^\T \Tk(\hl) s_\ell \Indic{\Omega^c} ] \,.
	\end{align}
	Using \eqref{eq:proof-lem:Expt-Matrix-Covariance-axu-4},
	\begin{align*}
		(\tau_\ell+\lambda)
		\Expt[s_\ell^\T \Tk(\hl) s_\ell \Indic{\Omega^c} ]
		&=
		\Expt
			 \left[ 
				\frac{\tau_\ell + \lambda}{\tau_\ell + (s_\ell^\T T_{\setminus k,\ell}(\hl) s_\ell)^{-1} } 
				\Indic{\Omega^c} 
				\right] \\
		&\le 
		\Expt\left[\left(1 + |\lambda(s_\ell^\T T_{\setminus k,\ell}(\hl) s_\ell)-1|\right)\Indic{\Omega^c} \right] \\
		&\lesssim \Expt[(1+\|s_\ell\|^2)\Indic{\Omega^c}] \lesssim \Pr(\Omega^c) + \sqrt{\Expt\|s_\ell\|^4 \Pr(\Omega^c) } \lesssim e^{-cm} \,,
	\end{align*}
	as $\Pr(\m{E}^c)\lesssim e^{-cm}$. Thus, $\Expt[\sum_{\ell=1,\ell\ne k}^d  (V^\T \m{E} V)_{k,\ell}(V^\T \m{E} V)_{k,\ell} \Indic{\Omega^c}] \le de^{-cm}$. And so, we finally conclude
	\begin{align}
		\Expt[\sum_{\ell=1,\ell\ne k}^d  (V^\T \Phi V)_{k,\ell}(V^\T \Phi V)_{k,\ell} ] 
		&= \Expt\sum_{\ell=1,\ell\ne k}^d  (V^\T \m{E} V)_{k,\ell}(V^\T \m{E} V)_{k,\ell} \Indic{\Omega} + \Expt\sum_{\ell=1,\ell\ne k}^d  (V^\T \m{E} V)_{k,\ell}(V^\T \m{E} V)_{k,\ell} \Indic{\Omega^c} \nonumber \\
		&\lesssim \frac{d}{m} + de^{-cm} = \BigOh(d/m) \,.
	\end{align}

\end{proof}

\newpage

\section{Proof of Theorem~\ref{thm:Convergence}}

Define the \textbf{Newton decrement} at a point $\theta\in\RR^d$:
\begin{equation}
    \sfN(\theta) = \left( (\nabla G(\theta))^\T (\nabla^2 G(\theta))^{-1}(\nabla G(\theta)) \right)^{1/2} .
\end{equation}
Denote the approximate Newton decrement, where $\bar{W}(\theta)$ is an approximation of the true inverse Hessian:
\begin{equation}
    \tilde{\sfN}(\theta) = \left( (\nabla G(\theta))^\T \bar{W}(\theta) (\nabla G(\theta)) 
    \right)^{1/2}.
\end{equation}
Note that if $\bar{W}$ is $\eta$-accurate in the sense of Definition~\ref{def:Accurate}, then 
\begin{equation}\label{eq:eta-Accurate}
    \sqrt{1-\eta}\sfN(\theta)\le \tilde{\sfN}(\theta) \le \sqrt{1+\eta}\sfN(\theta) \,.
\end{equation}

For self-concordant functions, it is known that the Newton decrement yields an upper bound on the suboptimality gap $G(\theta)-G(\theta^\star)$:
\begin{lemma}
    If $\sfN(\theta)<\sfN_0$ for some numerical constant $\sfN_0\le 0.68$, then
    \begin{equation*}
        G(\theta)-G(\theta^\star) \le \sfN^2(\theta)\,.
    \end{equation*}
\end{lemma}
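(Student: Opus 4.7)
The plan is to reduce the statement to the classical self-concordance estimate $G(\theta)-G(\theta^\star)\le \omega_*(\sfN(\theta))$, where $\omega_*(t):=-t-\log(1-t)$ is the standard ``dual conjugate'' potential from Nesterov--Nemirovski, and then check by elementary one-variable calculus that $\omega_*(t)\le t^2$ on the interval $[0,0.68]$.

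First I would invoke (citing e.g. Boyd--Vandenberghe Ch.~9, or Nesterov's lectures on convex optimization) the following classical inequality for self-concordant $G$ with minimizer $\theta^\star$:
\begin{equation*}
G(\theta)-G(\theta^\star)\;\le\;\omega_*(\sfN(\theta))\quad\text{whenever }\sfN(\theta)<1.
\end{equation*}
The derivation is standard: parametrize the line from $\theta^\star$ to $\theta$ as $\phi(s)=G(\theta^\star+s\,(\theta-\theta^\star))$, a self-concordant univariate function with $\phi'(0)=0$. The self-concordance inequality $|\phi'''|\le 2(\phi'')^{3/2}$ is equivalent to $|\tfrac{d}{ds}\phi''(s)^{-1/2}|\le 1$, which upon integration pins down $\phi''$, and hence $\phi'$ and $\phi$, up to explicit closed-form bounds; carrying out the integration and reparametrizing in terms of the Hessian-norm length of the segment yields exactly $\phi(1)-\phi(0)\le\omega_*(\sfN(\theta))$. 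I would cite this rather than rederive it; this step is the main (though routine-in-the-literature) obstacle.

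Second, I would verify $\omega_*(t)\le t^2$ on $[0,0.68]$ by elementary calculus. Set $h(t):=t^2-\omega_*(t)=t^2+t+\log(1-t)$, so that $h(0)=0$ and
\begin{equation*}
h'(t) \;=\; 2t+1-\frac{1}{1-t} \;=\; \frac{t(1-2t)}{1-t}.
\end{equation*}
Thus $h'\ge 0$ on $[0,1/2]$ and $h'\le 0$ on $[1/2,1)$, so on $[0,0.68]$ the function $h$ attains its minimum at one of the two endpoints. Since $h(0)=0$ and a direct numerical check gives $h(0.68)=0.68^2+0.68+\log(0.32)\approx 0.003>0$, we conclude $h\ge 0$ on $[0,0.68]$, i.e., $\omega_*(t)\le t^2$ throughout this interval. (The threshold is essentially tight: $h$ turns negative slightly past $t\approx 0.683$, which is exactly why the statement restricts to $\sfN_0\le 0.68$.)

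Combining the two steps, whenever $\sfN(\theta)<\sfN_0\le 0.68$ we obtain
\begin{equation*}
G(\theta)-G(\theta^\star)\;\le\;\omega_*(\sfN(\theta))\;\le\;\sfN(\theta)^2,
\end{equation*}
which is the claim. The only real work is Step~1; Step~2 is a two-line derivative computation plus one numerical evaluation, and the particular constant $0.68$ is tailored precisely so that the clean quadratic upper bound $\omega_*(t)\le t^2$ continues to hold.
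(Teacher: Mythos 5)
Your proposal is correct and is essentially the argument the paper points to: the paper's ``proof'' is just a citation to Boyd--Vandenberghe, Section~9.6.3, and the bound there is derived exactly as you describe, via $G(\theta)-G(\theta^\star)\le \omega_*(\sfN(\theta))$ with $\omega_*(t)=-t-\log(1-t)$ followed by the elementary check that $\omega_*(t)\le t^2$ on $[0,0.68]$. Your calculus verification (the sign analysis of $h'(t)=t(1-2t)/(1-t)$ and the endpoint evaluation $h(0.68)>0$) is accurate, so the only difference is that you spell out the step the paper delegates to the reference.
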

\begin{proof}
    See \citep[Section 9.6.3]{boyd2004convex}, specifically Eq. (95).
\end{proof}
Clearly, by \eqref{eq:eta-Accurate}, the approximate Newton decrement $\tilde{\sfN}(\theta)$ provides an upper bound on the optimality gap provided that the inverse Hessian is $\eta$-accurate, albeit a looser one.

The key tool in our analysis are the following estimates, due to \citep{pilanci2015newton}.
\begin{lemma}\label{lem:Convergence-Const}
Operate under the conditions of Theorem~\ref{thm:Convergence}. There exist numerical constants $\Lambda,\nu>0$ with $\Lambda<1/16$ such that the following holds. Assume that $0\le \eta<1/2$.
\begin{itemize}
    \item If at an iteration $t$, one has $\sfN(\theta_{t-1}) \ge \Lambda$, then at the next (approximate) Newton step: $G(\theta_t)-G(\theta^\star) \le -ab\nu$, where $a,b>0$ are the parameters for backtracking line search (see Algorithm~\ref{Alg:LineSearch}).
    \item If $\sfN(\theta_{t-1}) \le \Lambda$ then at the next iteration $\sfN(\theta_t)\le \sfN(\theta_{t-1})$.
\end{itemize}
\end{lemma}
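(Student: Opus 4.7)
The plan is to adapt the classical two-phase convergence analysis of the damped Newton method for self-concordant functions (cf.~\cite[Chapter 9]{boyd2004convex}), extending it in the spirit of \cite{pilanci2015newton} to handle the approximation error $\eta$ in $\bar{W}_t$. The main analytic tool is the self-concordance upper bound $G(\theta+s\Delta\theta)\le G(\theta)+s\langle \nabla G(\theta),\Delta\theta\rangle+\rho\bigl(s\|\Delta\theta\|_{\sfH}\bigr)$ with $\rho(u):=-u-\log(1-u)$, combined with the relation \eqref{eq:eta-Accurate} between $\sfN$ and $\tilde\sfN$. As a uniform preliminary step, I would write $\bar{W}_t=\sfH_t^{-1/2}(I+\bar{\m{E}}_t)\sfH_t^{-1/2}$ with $\|\bar{\m{E}}_t\|\le\eta$, from which one immediately deduces $-\langle g_t,\bar{W}_t g_t\rangle=\tilde\sfN(\theta_{t-1})^2\in[(1-\eta),(1+\eta)]\sfN(\theta_{t-1})^2$ and $\|\bar{W}_t g_t\|_{\sfH_t}^2\le (1+\eta)\tilde\sfN(\theta_{t-1})^2/(1-\eta)$. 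These two inequalities ensure that the approximate Newton direction $\Delta\theta_t=-\bar{W}_t g_t$ is a descent direction and has controlled norm in the local Hessian metric---the two ingredients needed to plug into self-concordance.

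For the damped phase ($\sfN(\theta_{t-1})\ge\Lambda$), I would substitute $\alpha\Delta\theta_t$ into the self-concordance inequality and minimize the resulting one-dimensional upper bound over $\alpha$; this yields an ``analytic'' step $\alpha^\star=\Theta(1/(1+\tilde\sfN(\theta_{t-1})))$ which provably satisfies the Armijo test in Algorithm~\ref{Alg:LineSearch}. Because backtracking can only overshoot $\alpha^\star$ by at most a factor $b$, the accepted step size satisfies $\alpha_t\ge b\alpha^\star$, and plugging back into the self-concordance upper bound gives a guaranteed decrease $G(\theta_{t-1})-G(\theta_t)\ge a\cdot b\cdot \rho_0\bigl(\tilde\sfN(\theta_{t-1})\bigr)$ for an explicit strictly increasing function $\rho_0$ vanishing only at zero. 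Since $\tilde\sfN(\theta_{t-1})\ge \sqrt{1-\eta}\,\Lambda\ge \Lambda/\sqrt{2}$ by \eqref{eq:eta-Accurate} and $\eta<1/2$, this decrement is bounded below by $ab\nu$ for a numerical constant $\nu=\nu(\Lambda)>0$.

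For the quadratic phase ($\sfN(\theta_{t-1})\le\Lambda$), I would first verify that the unit step $\alpha_t=1$ passes backtracking whenever $\Lambda$ is sufficiently small, by checking the Armijo condition directly via self-concordance. Then, writing $\nabla G(\theta_t)=\int_0^1\bigl[\nabla^2 G(\theta_{t-1}+s\Delta\theta_t)-\sfH_t\bigr]\Delta\theta_t\,ds+(I-\sfH_t\bar{W}_t)g_t$, and controlling the first term by the standard self-concordant Hessian-variation bound and the second by $\|\bar{\m{E}}_t\|\le\eta$, I would derive a recursion of the form $\sfN(\theta_t)\le \sfN(\theta_{t-1})^2/(1-\sfN(\theta_{t-1}))^2 + C\eta\,\sfN(\theta_{t-1})$. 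The main obstacle---and the only place where the specific constant $\Lambda<1/16$ enters---is to then choose $\Lambda$ small enough that, for every $\eta<1/2$, the right-hand side is bounded by $\sfN(\theta_{t-1})$; this reduces to the elementary numerical inequality $\Lambda/(1-\Lambda)^2+C/2\le 1$, which is satisfied by $\Lambda=1/16$ provided $C$ is the modest absolute constant produced by the self-concordance estimates. The remaining work is a bookkeeping exercise verifying the constants propagate correctly.
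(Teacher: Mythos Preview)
Your proposal is correct and follows the standard two-phase analysis for self-concordant objectives with inexact Hessians. Note that the paper does not actually prove this lemma: its ``proof'' is a one-line citation to \cite[Lemma 6.4]{pilanci2015newton}, and the recursion you derive in the quadratic phase is precisely (a slight rearrangement of) what the paper later quotes as Lemma~\ref{lem:ConvProof-Descent}, also by citation. So you are reconstructing the argument that the paper outsources; the approach is the same, and your outline is sound.

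One small remark on the quadratic-phase bookkeeping: rather than aiming for an abstract constant $C$ and then checking $\Lambda/(1-\Lambda)^2+C/2\le 1$, it is cleaner to use directly the sharp form of the recursion (as in Lemma~\ref{lem:ConvProof-Descent}),
\[
\sfN(\theta_t)\le \frac{(1+\eta)\sfN(\theta_{t-1})^2+\eta\,\sfN(\theta_{t-1})}{\bigl(1-(1+\eta)\sfN(\theta_{t-1})\bigr)^2},
\]
and verify numerically that with $\eta<1/2$ and $\sfN(\theta_{t-1})\le 1/16$ the right-hand side is at most $\sfN(\theta_{t-1})$. This avoids having to argue separately that your $C$ can be taken below $2$, which is the only place your sketch is slightly loose.
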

\begin{proof}
    See \citep[Lemma 6.4]{pilanci2015newton}.
\end{proof}
Note that Lemma~\ref{lem:Convergence-Const} implies that within at most
\begin{equation}
    T_0 = (G(\theta_0)-G(\theta^\star))/(ab\nu)
\end{equation}
iterations, we are guaranteed to reach $\theta_t$ such that $\sfN(\theta_t)\le \Lambda$; furthermore, once we have achieved that, $\sfN(\theta_{t'})\le \Lambda$ holds for every subsequent iteration $t'\ge t$. 

The following is the main estimate for the remainder of the analysis:
\begin{lemma}\label{lem:ConvProof-Descent}
Operate under the conditions of Theorem~\ref{thm:Convergence}, and assume that $\eta<1/2$. Then
\begin{equation}\label{eq:lem:ConvProof-Descent}
    \sfN(\theta_{t}) \le \frac{(1+\eta)\sfN^2(\theta_{t-1}) + \eta \sfN(\theta_{t-1})}{\left( 1 - (1+\eta)\sfN(\theta_{t-1}) \right)^2} \,.
\end{equation}
\end{lemma}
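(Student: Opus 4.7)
The plan is to combine the classical self-concordance machinery with careful tracking of the approximate-step inexactness, in the local regime where the backtracking line search accepts a unit step size $\alpha_t = 1$ (this is implicit when Lemma~\ref{lem:ConvProof-Descent} is actually invoked, since $\sfN$ is then small enough for the full Newton step to be accepted, cf.~\citep[\S 9.6.4]{boyd2004convex}). Set $v := -\bar{W}_t g_t$, so $\theta_t = \theta_{t-1} + v$, and put $N := \sfN(\theta_{t-1})$ and $R := \|v\|_{\sfH_t}$. Using $\bar{W}_t = \sfH_t^{-1/2}(I+\bar{\m{E}}_t)\sfH_t^{-1/2}$, a direct computation gives
\begin{equation*}
R^2 = g_t^\T \bar{W}_t \sfH_t \bar{W}_t g_t = g_t^\T \sfH_t^{-1/2}(I+\bar{\m{E}}_t)^2 \sfH_t^{-1/2} g_t \le (1+\eta)^2 N^2,
\end{equation*}
so $R \le (1+\eta) N$; I may also assume $R<1$, since otherwise the stated bound is vacuous.

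Next I apply the standard self-concordance Hessian sandwich: for $s \in [0,1]$, $(1-sR)^2 \sfH_t \preceq \sfH(\theta_{t-1}+sv) \preceq (1-sR)^{-2}\sfH_t$. In particular $\sfH_{t+1}^{-1} \preceq (1-R)^{-2}\sfH_t^{-1}$, so
\begin{equation*}
\sfN(\theta_t) = \|g_{t+1}\|_{\sfH_{t+1}^{-1}} \le \frac{\|g_{t+1}\|_{\sfH_t^{-1}}}{1-R},
\end{equation*}
reducing the problem to bounding $\|g_{t+1}\|_{\sfH_t^{-1}}$. I decompose via the fundamental theorem of calculus as $g_{t+1} = g_t + \sfH_t v + r_v$ with $r_v := \int_0^1 (\sfH(\theta_{t-1}+sv)-\sfH_t)v\,ds$, and bound the two pieces separately.

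The \textbf{inexactness piece} rewrites as $g_t + \sfH_t v = (I - \sfH_t \bar{W}_t)g_t = -\sfH_t^{1/2}\bar{\m{E}}_t \sfH_t^{-1/2} g_t$, whose $\sfH_t^{-1}$-norm is at most $\|\bar{\m{E}}_t\|\, N \le \eta N$. For the \textbf{curvature remainder} $r_v$, the Hessian sandwich gives $\|\sfH_t^{-1/2}(\sfH(\theta_{t-1}+sv)-\sfH_t)\sfH_t^{-1/2}\|_{op} \le (1-sR)^{-2}-1$, hence $\|(\sfH(\theta_{t-1}+sv)-\sfH_t)v\|_{\sfH_t^{-1}} \le ((1-sR)^{-2}-1)R$, and integrating in $s$ yields $\|r_v\|_{\sfH_t^{-1}} \le R^2/(1-R)$. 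Triangle inequality then gives $\|g_{t+1}\|_{\sfH_t^{-1}} \le \eta N + R^2/(1-R)$, so
\begin{equation*}
\sfN(\theta_t) \le \frac{\eta N}{1-R} + \frac{R^2}{(1-R)^2}.
\end{equation*}
Both terms on the right are increasing in $R \in [0,1)$, so substituting the upper bound $R \le (1+\eta)N$ and simplifying the numerator---$\eta N(1-(1+\eta)N) + (1+\eta)^2 N^2 = \eta N + (1+\eta) N^2$---delivers exactly \eqref{eq:lem:ConvProof-Descent}.

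The main source of friction is ensuring that the two error sources---inexactness of $\bar{W}_t$, which feeds the linear-in-$\eta$ contribution $\eta N$, and the self-concordance curvature variation, which feeds the quadratic contribution $R^2/(1-R)$---are combined so that the final form has precisely a linear $\eta\sfN$ part and a $(1+\eta)\sfN^2$ part in the numerator, without any loose constants. The monotonicity step in $R$ has to be applied after aggregating (not termwise), and the assumption $R < 1$ (equivalently $(1+\eta)\sfN(\theta_{t-1}) < 1$) has to be flagged as the implicit regime in which the estimate is informative.
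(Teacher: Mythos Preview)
Your argument is correct: the computation of $R\le(1+\eta)N$, the self-concordance Hessian sandwich, the decomposition of $g_{t+1}$ into an inexactness piece and a curvature remainder, the two bounds $\eta N$ and $R^2/(1-R)$, and the final algebraic simplification all check out exactly. The paper itself does not give an independent proof of this lemma---it simply cites \citep[Lemma~6.6]{pilanci2015newton}---so your write-up supplies the details that the paper omits, and follows what is essentially the same self-concordance-plus-inexactness argument one finds in that reference. Your caveat that the bound is derived under $\alpha_t=1$ (and is only informative when $(1+\eta)\sfN(\theta_{t-1})<1$) is appropriate and matches how the lemma is actually used downstream in the proof of Theorem~\ref{thm:Convergence}.
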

\begin{proof}
    This result is \citep[Lemma 6.6]{pilanci2015newton}.
\end{proof}
One can further simplify \eqref{eq:lem:ConvProof-Descent} assuming that $\sfN(\theta_{t-1})<1/16$. Coarsely lower bounding the denominator and upper bounding the numerator: 
\begin{equation}\label{eq:proof-Conv}
    \sfN(\theta_t) \le 2\sfN^2(\theta_{t-1}) + 2 \eta \sfN(\theta_{t-1}) 
    \le 
    \begin{cases}
        4\sfN^2(\theta_{t-1})\quad&\textrm{if}\quad \sfN(\theta_{t-1})\ge \eta, \\
        4\eta \sfN(\theta_{t-1}) \quad&\textrm{if}\quad \sfN(\theta_{t-1})\le \eta
    \end{cases} \,,
\end{equation}
provided that $\sfN(\theta_{t-1})<1/16$. 

With the above estimates in hand, we are ready to prove Theorem~\ref{thm:Convergence}.

\begin{proof}
    (Of Theorem~\ref{thm:Convergence}.)
Recall that we assume that $\eta<1/5$, and in particular $\eta<1/2$. By Lemma~\ref{lem:Convergence-Const},
within $T_0$ iterations, the approximate Newton method reaches $\theta_t$ such that $\sfN(\theta_{t'})<1/16$ for all $t'\ge t$. In particular, also $G(\theta)-G(\theta^\star)\le (1+\eta)1/16^2 \le 3/500$.
Let $\eps>0$ be the desired precision; suppose that $\eps<3/500$. 
We consider two cases.

First, assume that $\eta \le \eps <3/500$. Let $T_1(\eps)$ be the smallest integer $t$ such that the dynamic
\begin{equation}
    A_t = 4A_{t-1}^2,\quad A_0=1/16
\end{equation}
satisfies $A_t\le \eps$. It is easy to verify that $A_t= 1/4(1/4)^{2^t}$ (e.g. by induction), hence $T_1(\eps)=O(\log\log(1/\eps))$. We conclude that when $\eps\ge \eta$, $\sfT(\eps)\le T_0 + T_1(\eps)$.

The second case is when $\eps<\eta$, therefore we are in the regime of \eqref{eq:proof-Conv} where decay is linear rather than quadratic. 
Let $T_2(\eps)$ be the smallest integer $t$ such that the dynamic 
\begin{equation}
    B_t = 4\eta B_{t-1},\quad B_0=\eta
\end{equation}
satisfies $B_t\le \eps$. Clearly $B_t=(4\eta)^t B_0$,  hence $T_2(\eps)=O(\frac{\log(\eta/\eps)}{\log(1/\eta)})$ (here we used that $4\eta<1$, by assumption). We conclude that when $\eps<\eta$, $\sfT(\eps)\le T_0+T_1(\eta)+T_2(\eps)$.
 
\end{proof}

\newpage

\section{Auxilliary Technical Results}

\begin{lemma}
    [Sherman-Morrison]
    Let $A\in \RR^{n\times n}$ be invertible, and $u,v\in\RR^n$.
     
    The matrix $A+uv^\T$ is invertible if and only if $1+v^\T A^{-1}u\ne 0$. In that case,
    \begin{equation}
        (A+uv^\T)^{-1} 
        =
        A^{-1} 
        - \frac{A^{-1}uv^\T A^{-1}}{1+v^\T A^{-1}u } \,.
    \end{equation}
    \label{lem:Sherman-Morrison}
\end{lemma}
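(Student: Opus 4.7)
The plan is to prove both assertions by reducing to the rank-one perturbation of the identity, which admits an elementary analysis. The starting point is the factorization
\begin{equation*}
A + uv^\T = A\bigl(I + A^{-1}u\, v^\T\bigr),
\end{equation*}
so that invertibility of $A + uv^\T$ is equivalent to invertibility of $I + xy^\T$ with $x := A^{-1}u$ and $y := v$. To characterize when $I + xy^\T$ is invertible, I would examine its kernel: if $(I + xy^\T)w = 0$ then $w = -(y^\T w)\,x$, so any null vector must be a scalar multiple of $x$. Plugging $w = \alpha x$ back in gives $\alpha(1 + y^\T x)\,x = 0$, hence the kernel is trivial if and only if $1 + y^\T x = 1 + v^\T A^{-1} u \ne 0$. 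This already yields the iff-statement of the lemma.

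For the explicit formula, I would proceed by direct verification, which requires only that $1 + v^\T A^{-1}u \ne 0$ so the denominator makes sense. Compute
\begin{align*}
(A + uv^\T)\left(A^{-1} - \frac{A^{-1}u v^\T A^{-1}}{1 + v^\T A^{-1}u}\right)
&= I + uv^\T A^{-1} - \frac{u v^\T A^{-1} + u\,(v^\T A^{-1}u)\,v^\T A^{-1}}{1 + v^\T A^{-1}u}\\
&= I + uv^\T A^{-1} - \frac{(1 + v^\T A^{-1}u)\,u v^\T A^{-1}}{1 + v^\T A^{-1}u}\\
&= I,
\end{align*}
where the key algebraic step is factoring the scalar $1 + v^\T A^{-1}u$ out of the second term in the numerator. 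An analogous multiplication on the other side (or the standard fact that a one-sided inverse of a square matrix is two-sided) confirms this is indeed the inverse.

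There is no genuine obstacle here; the proof is routine linear algebra. The only mild bookkeeping point is ensuring that the claimed formula is written down first so that its well-definedness ($1 + v^\T A^{-1}u \ne 0$) aligns exactly with the invertibility condition established in the first part, making the two statements of the lemma logically consistent.
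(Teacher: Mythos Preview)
Your proof is correct and entirely standard. Note, however, that the paper does not actually prove this lemma: it is stated without proof in the auxiliary technical results section as a well-known identity, so there is no ``paper's own proof'' to compare against. Your argument---factoring out $A$, analyzing the kernel of $I + xy^\T$, then verifying the inverse by direct multiplication---is the textbook approach and would serve perfectly well as a self-contained justification.
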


\begin{lemma}
    [Block matrix inversion]
    Let $A,B,C,D$ be matrices of conforming dimensions. Provided that $A,D$ are both invertible,
    \begin{equation}
    	\MatL
    	A &B \\
    	C &D
    	\MatR^{-1}
    	=
    	\MatL
    	(A-BD^{-1}C)^{-1} &0 \\
    	0 &(D-CA^{-1}B)^{-1}
    	\MatR
    	\MatL
    	I &-BD^{-1} \\
    	-CA^{-1} &I
    	\MatR \,.
    \end{equation}
    \label{lem:BlockInverse}
\end{lemma}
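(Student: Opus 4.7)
The plan is to verify the identity by direct block multiplication. Denote the two Schur complements $S_D := A - B D^{-1} C$ and $S_A := D - C A^{-1} B$; both are tacitly assumed invertible in the statement (indeed, when the full block matrix is invertible along with $A$ and $D$, this follows from $\det\MatL A & B \\ C & D \MatR = \det(D)\det(S_D) = \det(A)\det(S_A)$). Let $M$ denote the product on the right-hand side. I would compute $\MatL A & B \\ C & D \MatR M$ block by block and check that all four blocks agree with $I_2$.

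As a representative case, the $(1,1)$ block of the product equals $A S_D^{-1} - B S_A^{-1} C A^{-1}$. Showing this equals the identity reduces, after left-multiplying by $A^{-1}$ and right-multiplying by $S_D$, to the scalar-block identity $C A^{-1} S_D = S_A D^{-1} C$; both sides expand to the common value $C - C A^{-1} B D^{-1} C$. The other three block identities (top-right, bottom-left, bottom-right) reduce to entirely analogous elementary manipulations, each using only the definitions of $S_A$ and $S_D$ together with the invertibility of $A$ and $D$.

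A more structural derivation, which makes the mixed form in the lemma transparent, uses the block UDL factorization
$$
\MatL A & B \\ C & D \MatR = \MatL I & B D^{-1} \\ 0 & I \MatR \MatL S_D & 0 \\ 0 & D \MatR \MatL I & 0 \\ D^{-1} C & I \MatR,
$$
which is valid because $D$ is invertible, together with the dual LDU factorization (valid because $A$ is invertible) with the roles of $A$ and $D$ exchanged. Inverting each factorization (the triangular factors are trivially inverted by flipping the sign of the off-diagonal block) yields a closed-form expression for the inverse in which only one Schur complement appears; the form in the lemma simply reads off the $(1,1)$ block from one factorization and the $(2,2)$ block from the other, the consistency of the off-diagonal entries being guaranteed by the Woodbury matrix inversion identity. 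I anticipate no obstacles: the lemma is a purely algebraic identity whose proof is short and mechanical, and the only substantive point is the bookkeeping involved in tracking both Schur complements simultaneously.
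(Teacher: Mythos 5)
Your proposal is correct: verifying the identity by block multiplication (or via the UDL factorization and reading off one Schur complement from each side, with Woodbury reconciling the off-diagonal blocks) is exactly the standard argument, and your key reduction $CA^{-1}S_D = S_A D^{-1}C$, with both sides equal to $C - CA^{-1}BD^{-1}C$, checks out. There is nothing to compare against, since the paper lists Lemma~\ref{lem:BlockInverse} among the auxiliary technical results as a standard fact and gives no proof; your side remark is also apt that, as stated, invertibility of $A$ and $D$ alone does not guarantee invertibility of the two Schur complements (take all four blocks equal to $I$), so their invertibility must indeed be read as a tacit hypothesis, which is how the lemma is used in the paper.
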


\begin{lemma}\label{lem:LowRankResolvent}
	(Resolvent low-rank perturbation.) 
	Let $A,B,C\succeq 0$ be PSD matrices, and $z>0$. Then
	\[
	\left| \tr C(zI+A)^{-1} - \tr C(zI+B)^{-1} \right| \le \rank(A-B)\frac{\|C\|}{z}\,.
	\]
\end{lemma}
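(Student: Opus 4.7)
My plan is to reduce the claim to a bound on the nuclear norm of $\Delta := (zI+A)^{-1} - (zI+B)^{-1}$, via the trace--duality inequality
\[
\left| \tr(C\Delta) \right| \le \|C\|_{\mathrm{op}} \, \|\Delta\|_* \,.
\]
It then suffices to show $\|\Delta\|_* \le r/z$, where $r := \rank(A-B)$.

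To establish this, I combine two elementary observations about $\Delta$. The first is a rank bound: the standard resolvent identity
\[
(zI+A)^{-1} - (zI+B)^{-1} = (zI+A)^{-1}(B-A)(zI+B)^{-1}
\]
immediately yields $\rank(\Delta) \le \rank(B-A) = r$. The second is an operator-norm bound: since $A,B \succeq 0$, we have $zI+A \succeq zI$ (and similarly for $B$), hence $0 \preceq (zI+A)^{-1} \preceq z^{-1} I$ and $0 \preceq (zI+B)^{-1} \preceq z^{-1} I$; consequently
\[
-z^{-1} I \preceq \Delta \preceq z^{-1} I \,,
\]
so $\|\Delta\|_{\mathrm{op}} \le 1/z$. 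Because $\Delta$ is symmetric with at most $r$ nonzero eigenvalues, each of magnitude at most $1/z$, we conclude $\|\Delta\|_* = \sum_i |\lambda_i(\Delta)| \le r/z$, which combined with the trace--duality inequality gives the claim.

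The ``hard part'' is really the conceptual step at the outset: one should \emph{not} try to estimate $\tr(C\Delta)$ by pushing the resolvent identity through the trace, since that route produces factors like $\|B-A\|_1 \|C\|/z^2$, which introduce unwanted dependence on the magnitudes of $A-B$. The desired $\|A-B\|$-free bound comes from using the resolvent identity solely for its rank content, while the magnitude control is supplied for free by the PSD-ness of $A$ and $B$ through the bound $(zI+A)^{-1} \preceq z^{-1} I$.
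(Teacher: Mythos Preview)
Your proof is correct. The paper does not supply its own proof of this lemma, citing instead Bai and Silverstein's monograph; it does remark, at the end of the proof of Lemma~\ref{lem:StielConc}, that the Sherman--Morrison calculation carried out there---bounding the resolvent trace difference under a single rank-one update---is ``essentially'' the proof. That route proceeds by decomposing $A-B$ into rank-one PSD pieces and iterating, bounding each step by $\|C\|/z$ via the inequality $\frac{x}{1+x}\le 1$. Your argument bypasses the iteration entirely: the resolvent identity is used only to extract the rank bound on $\Delta$, while the magnitude control comes separately from $0 \preceq (zI+A)^{-1},(zI+B)^{-1} \preceq z^{-1}I$, whence $\|\Delta\|_* \le r/z$ immediately. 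Both approaches are elementary; yours handles the general rank-$r$ statement in one stroke, while the Sherman--Morrison version is the form that arises most naturally in the leave-one-out applications in the paper.
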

For reference, see for example \citep{bai2010spectral}.

\paragraph*{}
Let $\{\emptyset,\Omega\}=:\m{F}_0\le \m{F}_1\le \m{F}_2 \le \ldots $ be a filtration of $\sigma$-algebras over some given probability space $\Omega$. Recall that a random process $\{X_n\}_{n=1}^\infty$ is a \emph{martingale difference} (adapted to this filtration) if 1) for every $n$, $X_n$ is $\m{F}_n$-measurable; and 2) $\m{F}_{n-1}$-almost surely, $\Expt[X_n|\m{F}_{n-1}]=0$. 
\begin{lemma}
	[Azuma-Hoeffding]
	Suppose that $(X_n,\m{F}_{n})_{n=1}^{N}$ is a martingale difference, and $\{B_n\}_{n=1}^N$ constants. Suppose that for all $1\le n\le N$, almost surely, $|X_n|\le B_n$. Then for all $t\ge 0$,
	\[
	 \Pr(|\sum_{n=1}^N X_n| \ge t) \le 2\exp\left(\frac{-\frac{1}{2}t^2}{\sum_{n=1}^N B_n^2}\right) \,.
	\] 
	\label{lem:Hoeffding}
\end{lemma}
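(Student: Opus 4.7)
The plan is to follow the classical exponential-moment (Chernoff) route, which is the standard proof of Azuma--Hoeffding. First I would handle the one-sided bound $\Pr(\sum_{n=1}^N X_n \ge t)$; the two-sided bound then follows by applying the same argument to the martingale difference $(-X_n,\mathcal{F}_n)$ and taking a union bound, which produces the factor of $2$ in front of the exponential.

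For the one-sided bound, fix $\lambda>0$ and apply Markov's inequality to the nonnegative random variable $\exp(\lambda \sum_{n=1}^N X_n)$:
\begin{equation*}
    \Pr\Bigl(\sum_{n=1}^N X_n \ge t\Bigr) \le e^{-\lambda t}\,\Expt\Bigl[\exp\Bigl(\lambda \sum_{n=1}^N X_n\Bigr)\Bigr].
\end{equation*}
The key step is to control the moment generating function by peeling off one increment at a time using the tower property. Conditioning on $\mathcal{F}_{N-1}$, the factor $\exp(\lambda \sum_{n=1}^{N-1}X_n)$ is $\mathcal{F}_{N-1}$-measurable, so it suffices to bound $\Expt[\exp(\lambda X_N)\mid \mathcal{F}_{N-1}]$. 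This is where I invoke Hoeffding's lemma: if $Y$ is a random variable with $\Expt Y = 0$ and $Y \in [a,b]$ almost surely, then $\Expt[e^{\lambda Y}] \le \exp(\lambda^2 (b-a)^2/8)$. Since $X_N$ conditionally satisfies $\Expt[X_N\mid \mathcal{F}_{N-1}]=0$ and $X_N\in[-B_N,B_N]$ almost surely, applying Hoeffding's lemma conditionally gives $\Expt[e^{\lambda X_N}\mid \mathcal{F}_{N-1}] \le \exp(\lambda^2 B_N^2/2)$ (using $(2B_N)^2/8 = B_N^2/2$).

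Iterating this peeling argument $N$ times yields
\begin{equation*}
    \Expt\Bigl[\exp\Bigl(\lambda \sum_{n=1}^N X_n\Bigr)\Bigr] \le \exp\Bigl(\tfrac{\lambda^2}{2}\sum_{n=1}^N B_n^2\Bigr),
\end{equation*}
so that
\begin{equation*}
    \Pr\Bigl(\sum_{n=1}^N X_n \ge t\Bigr) \le \exp\Bigl(-\lambda t + \tfrac{\lambda^2}{2}\sum_{n=1}^N B_n^2\Bigr).
\end{equation*}
Optimizing over $\lambda>0$ by setting $\lambda = t / \sum_{n=1}^N B_n^2$ gives the desired single-sided bound $\exp(-\tfrac{1}{2}t^2/\sum_{n=1}^N B_n^2)$, and combining with the symmetric bound on $\Pr(-\sum X_n \ge t)$ via a union bound completes the proof.

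The only nontrivial ingredient is Hoeffding's lemma, which I would either cite or prove in one line by Taylor-expanding $\log \Expt[e^{\lambda Y}]$ or via convexity of the exponential (writing $Y = \frac{b-Y}{b-a}a + \frac{Y-a}{b-a}b$ and bounding the resulting log-mgf). Everything else is routine; there is no real obstacle beyond keeping track of the factor $1/2$ arising from the range-$2B_n$ convention in Hoeffding's lemma.
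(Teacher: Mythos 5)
Your proof is correct: it is the standard Chernoff/Hoeffding-lemma argument, with the conditional moment-generating-function bound peeled off one increment at a time via the tower property, the optimization $\lambda = t/\sum_{n=1}^N B_n^2$, and the union bound supplying the factor $2$; the bookkeeping $(2B_n)^2/8 = B_n^2/2$ is also right. The paper states this lemma as a standard auxiliary result without proof, so there is nothing to compare against beyond noting that your argument is exactly the canonical one the citation presupposes.
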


The sub-Gaussian and sub-Exponential norms of a (scalar) random variable are, respectively 
\begin{align}
	\|X\|_{\psi_2} = \inf \left\{\sigma>0 \;:\; \Expt e^{X^2/\sigma^2} \le 2 \right\} \,,\qquad 
	\|X\|_{\psi_1} = \inf \left\{\sigma>0 \;:\; \Expt e^{|X|/\sigma} \le 2 \right\} \,.
	\label{eq:Orlicz-Norms}
\end{align}
We call $X$ sub-Gaussian, resp. sub-Exponential, if $\|X\|_{\psi_2}<\infty$, resp. $\|X\|_{\psi_1}<\infty$. 

For a random  vector $r$, we define $\|r\|_{\psi_i}=\sup_{\|v\|=1}\|v^\T r\|_{\psi_i}$. That is, it is the largest $\psi_i$-norm of a one-dimensional projection of $r$.

\begin{lemma}
	The following holds.
	\begin{enumerate}
		\item $\|\cdot\|_{\psi_i}$ is a norm on the subspace of random variables $X$ such that $\|X\|_{\psi_i}<\infty$.
		\item If $X,Y$ are sub-Gaussian (possible statistically dependent), then $XY$ is sub-Exponential, with $\|XY\|_{\psi_1} \le \|X\|_{\psi_2}\|Y\|_{\psi_2}$.
		\item If $|Y|\le B$ almost surely for $B\ge 0$ constant, $\|XY\|_{\psi_i} \le B\|X\|_{\psi_i}$.
		\item (Centralization.) For any $X$, $\|X-\Expt[X]\|_{\psi_i} \le \|X\|_{\psi_i}$.
	\end{enumerate}
\end{lemma}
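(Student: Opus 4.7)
The plan is to verify each of the four properties in turn; all follow from elementary convexity arguments applied to the Young functions $\psi_2(t) = e^{t^2}-1$ and $\psi_1(t) = e^{|t|}-1$ that define the Orlicz (Luxemburg) norms in \eqref{eq:Orlicz-Norms}.

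First I would prove (1). Absolute homogeneity and positive definiteness are immediate from the definition. For the triangle inequality in the $\psi_2$ case, set $\sigma_X = \|X\|_{\psi_2}$, $\sigma_Y = \|Y\|_{\psi_2}$, and $\sigma = \sigma_X + \sigma_Y$. Writing $(X+Y)/\sigma = (\sigma_X/\sigma)(X/\sigma_X) + (\sigma_Y/\sigma)(Y/\sigma_Y)$ and using convexity of $t\mapsto t^2$ gives
\[
\frac{(X+Y)^2}{\sigma^2} \le \frac{\sigma_X}{\sigma}\frac{X^2}{\sigma_X^2} + \frac{\sigma_Y}{\sigma}\frac{Y^2}{\sigma_Y^2}.
\]
Exponentiating and applying H\"older's inequality with conjugate exponents $\sigma/\sigma_X, \sigma/\sigma_Y$ yields $\Expt e^{(X+Y)^2/\sigma^2} \le (\Expt e^{X^2/\sigma_X^2})^{\sigma_X/\sigma}(\Expt e^{Y^2/\sigma_Y^2})^{\sigma_Y/\sigma} \le 2^{\sigma_X/\sigma} \cdot 2^{\sigma_Y/\sigma} = 2$, which is the triangle inequality. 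The argument for $\psi_1$ is analogous, using $|X+Y|\le|X|+|Y|$ and the same H\"older step.

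Next, for (2), I would deduce the product bound from Young's inequality $ab \le \tfrac12 a^2 + \tfrac12 b^2$. Applying it with $a = |X|/\sigma_X$, $b = |Y|/\sigma_Y$, and $\sigma := \sigma_X\sigma_Y$, one gets $|XY|/\sigma \le \tfrac12 X^2/\sigma_X^2 + \tfrac12 Y^2/\sigma_Y^2$ pointwise. Then the convexity bound $e^{(u+v)/2}\le \tfrac12(e^u+e^v)$ gives $e^{|XY|/\sigma}\le \tfrac12 e^{X^2/\sigma_X^2} + \tfrac12 e^{Y^2/\sigma_Y^2}$, and taking expectations bounds the RHS by $\tfrac12(2+2)=2$. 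So $\|XY\|_{\psi_1}\le \sigma_X\sigma_Y$, as claimed. Property (3) is immediate: if $|Y|\le B$ a.s. then, with $\sigma := B\|X\|_{\psi_i}$, we have $|XY|^i/\sigma^i \le |X|^i/\|X\|_{\psi_i}^i$ almost surely, and the Orlicz definition applied to $X$ directly yields $\Expt e^{|XY|^i/\sigma^i}\le 2$.

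Finally, for (4), the centralization bound, I would combine the triangle inequality from (1) with a bound on the norm of the deterministic quantity $\Expt X$. Applying Jensen's inequality to the convex function $x\mapsto e^{x^2/\sigma^2}$ (resp. $x\mapsto e^{|x|/\sigma}$ for the $\psi_1$ case) with $\sigma = \|X\|_{\psi_i}$ gives $e^{(\Expt X)^2/\sigma^2} \le \Expt e^{X^2/\sigma^2}\le 2$, so that $|\Expt X|$ satisfies the corresponding Orlicz condition at scale $\sigma$, and hence $\|\Expt X\|_{\psi_i}\le \|X\|_{\psi_i}$. The triangle inequality then yields $\|X-\Expt X\|_{\psi_i} \le \|X\|_{\psi_i}+\|\Expt X\|_{\psi_i}$, from which the stated bound follows (the mild point requiring care is the multiplicative constant $\sqrt{\log 2}$ that arises when computing the $\psi_2$-norm of a constant; it is either absorbed into the stated convention or replaced by the equivalent universal-constant statement $\|X-\Expt X\|_{\psi_i}\lesssim \|X\|_{\psi_i}$, which is all that is used downstream). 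The only ``obstacle'' here is a purely cosmetic one about tracking this universal constant; the substance is just Jensen plus the triangle inequality.
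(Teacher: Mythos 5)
The paper does not actually prove this lemma: it simply defers to the textbook \citep{vershynin2018high}. Your proposal supplies the standard self-contained arguments, and items 1--3 are correct exactly as written: the Luxemburg triangle inequality via convexity of $t\mapsto t^2$ (resp.\ $t\mapsto |t|$) plus H\"older with exponents $\sigma/\sigma_X,\sigma/\sigma_Y$; the product bound via Young's inequality $ab\le \tfrac12 a^2+\tfrac12 b^2$ followed by $e^{(u+v)/2}\le\tfrac12(e^u+e^v)$, which gives $\|XY\|_{\psi_1}\le\|X\|_{\psi_2}\|Y\|_{\psi_2}$ with constant exactly $1$ as stated; and the boundedness item by direct domination. (Minor hygiene: to use $\Expt e^{X^2/\sigma_X^2}\le 2$ at $\sigma_X=\|X\|_{\psi_2}$ you should note the infimum in \eqref{eq:Orlicz-Norms} is attained, e.g.\ by monotone convergence, or work with $\sigma_X+\eps$ and let $\eps\to0$.)

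On item 4, be aware that what your argument proves is $\|X-\Expt X\|_{\psi_i}\le 2\|X\|_{\psi_i}$, not the constant-$1$ inequality as literally stated in the lemma. Your Jensen step in fact directly shows that $\sigma=\|X\|_{\psi_i}$ is feasible for the constant $\Expt X$, so $\|\Expt X\|_{\psi_i}\le\|X\|_{\psi_i}$ with constant $1$ and no $\sqrt{\log 2}$ bookkeeping is needed there; the factor $2$ enters only through the triangle inequality, so your parenthetical attributes the slack to the wrong place. More importantly, the constant-$1$ version cannot be rescued by a cleverer argument: with the Luxemburg definition \eqref{eq:Orlicz-Norms} one can construct two-point distributions (a value $b$ with probability close to $1$ and a far value $-M$ with exponentially small probability) for which $\|X-\Expt X\|_{\psi_2}>\|X\|_{\psi_2}$, which is why the standard reference states centering only up to a universal constant. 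So the gap is in the paper's phrasing of item 4 rather than in your proof; your flagged version $\|X-\Expt X\|_{\psi_i}\lesssim\|X\|_{\psi_i}$ (with explicit constant $2$) is correct and is all that the downstream arguments use.
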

We refer to the book \citep{vershynin2018high} for reference.

\begin{lemma}
	[Bernstein]
	Suppose that $(X_n,\m{F}_{n})_{n=1}^{N}$ is a martingale difference. Then for all $t\ge 0$,
	\[
	\Pr(|\sum_{n=1}^N X_n| \ge t) \le 2\exp\left(-c\min\{ \frac{t}{\max_{n=1,\ldots,N}\|X_n\|_{\psi_1}}, \frac{t^2}{\sum_{n=1}^N \|X_n\|_{\psi_1}^2} \}\right) \,.
	\] 
	\label{lem:Bernstein}
\end{lemma}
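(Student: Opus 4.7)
The plan is a standard Chernoff–MGF argument, iterated over the martingale filtration. First I would establish the scalar sub-Exponential MGF estimate: if $Y$ has mean zero and $\|Y\|_{\psi_1}\le K$, then there is a numerical constant $c_0>0$ such that $\Expt[e^{\lambda Y}]\le \exp(c_0 \lambda^2 K^2)$ for all $|\lambda|\le 1/(c_0 K)$. This is the usual Taylor expansion of the MGF: bound $\Expt|Y|^k \le k! K^k$ (a consequence of the $\psi_1$ definition \eqref{eq:Orlicz-Norms}), then sum the series $\Expt[e^{\lambda Y}]=1+\sum_{k\ge 2}\frac{\lambda^k \Expt Y^k}{k!}$ in the regime where $|\lambda|K$ is a small constant, using mean-zero to kill the $k=1$ term.

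Next I would lift this to the conditional version. Writing $K_n:=\|X_n\|_{\psi_1}$, the martingale-difference hypothesis gives $\Expt[X_n\mid \m{F}_{n-1}]=0$, and applying the scalar estimate conditionally yields $\Expt[e^{\lambda X_n}\mid \m{F}_{n-1}]\le \exp(c_0 \lambda^2 K_n^2)$ almost surely, provided $|\lambda|\le 1/(c_0 K_n)$. With this in hand, the total MGF is controlled by the tower property: denoting $S_N=\sum_{n=1}^N X_n$,
\[
\Expt\bigl[e^{\lambda S_N}\bigr]
= \Expt\bigl[e^{\lambda S_{N-1}}\,\Expt[e^{\lambda X_N}\mid \m{F}_{N-1}]\bigr]
\le e^{c_0 \lambda^2 K_N^2}\, \Expt\bigl[e^{\lambda S_{N-1}}\bigr],
\]
and iterating $N$ times gives $\Expt[e^{\lambda S_N}]\le \exp\bigl(c_0 \lambda^2 \sum_{n=1}^N K_n^2\bigr)$ for $|\lambda|\le 1/(c_0 \max_n K_n)$.

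Finally I would apply Markov's inequality: $\Pr(S_N\ge t)\le e^{-\lambda t}\Expt[e^{\lambda S_N}]\le \exp(-\lambda t + c_0 \lambda^2 \sum_n K_n^2)$, and optimize in $\lambda$. The unconstrained minimizer $\lambda^\star = t/(2c_0 \sum_n K_n^2)$ is admissible precisely when $t \le \tfrac{2}{1}\sum_n K_n^2/\max_n K_n$, giving the sub-Gaussian branch $\exp(-c t^2/\sum_n K_n^2)$; otherwise take $\lambda=1/(c_0\max_n K_n)$ (the boundary of the admissible range), which yields the sub-Exponential branch $\exp(-c t/\max_n K_n)$. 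Taking the worse of the two bounds produces the stated $\min$. A symmetric argument applied to $-X_n$ (which is also a martingale difference with the same $\psi_1$-norms) controls the lower tail, and a union bound supplies the factor of $2$.

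The main technical obstacle is the conditional sub-Exponential MGF bound: strictly speaking $\|X_n\|_{\psi_1}$ is defined unconditionally via \eqref{eq:Orlicz-Norms}, whereas the tower-property step requires a bound on $\Expt[e^{\lambda X_n}\mid \m{F}_{n-1}]$. The cleanest remedy is to interpret $\|X_n\|_{\psi_1}$ in the statement as the essential supremum of the conditional $\psi_1$-norm given $\m{F}_{n-1}$ (a standard convention for martingale Bernstein inequalities), under which the conditional MGF estimate is immediate. Alternatively, one can reduce to the bounded case by truncating $X_n$ at a level of order $K_n \log(N/\delta)$, invoking Azuma–Hoeffding (Lemma~\ref{lem:Hoeffding}) on the truncated martingale and a union bound on the tail event; this incurs at worst an extra logarithmic factor, absorbed by the constant $c$.
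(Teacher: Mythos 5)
The paper never proves this lemma: it is stated as an off-the-shelf auxiliary fact (in the style of the Bernstein inequality in Vershynin's book, which is proved there for independent sub-exponential summands), so there is no in-paper argument to compare against; your Chernoff--MGF route is exactly the canonical proof of that standard result. The real issue is the one you flag yourself, and it is not a formality that can be waved away: with $\|X_n\|_{\psi_1}$ taken unconditionally as in \eqref{eq:Orlicz-Norms}, the conditional MGF bound $\Expt[e^{\lambda X_n}\mid \m{F}_{n-1}]\le \exp(c_0\lambda^2\|X_n\|_{\psi_1}^2)$ needed for the tower-property step simply does not follow, and in fact the inequality as literally stated is false for general martingale differences. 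For a counterexample, take $B\in\m{F}_1$ with $\Pr(B)=1/N$ and, for $n\ge 2$, $X_n=L\eps_n\Indic{B}+\sigma\eps_n\Indic{B^c}$ with independent Rademacher signs $\eps_n$ and $\sigma$ negligible: then $\|X_n\|_{\psi_1}\asymp L/\log N$, yet conditionally on $B$ the sum is a Rademacher sum of magnitude $\asymp L\sqrt{N}$, so $\Pr\bigl(|\sum_n X_n|\ge c L\sqrt{N}\bigr)\gtrsim 1/N$, while the claimed bound at $t=cL\sqrt{N}$ is $\exp(-c'\log^2 N)\ll 1/N$. So your first remedy --- reading $\|X_n\|_{\psi_1}$ as the essential supremum of the conditional $\psi_1$-norm given $\m{F}_{n-1}$ (or assuming independent $X_n$) --- is not an optional convention but the necessary hypothesis; under it, your scalar MGF estimate, conditioning, tower property, and the two-regime optimization in $\lambda$ constitute a complete and correct proof.

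Your second remedy, however, does not rescue the literal statement. Truncating at level $K_n\log(N/\delta)$ (i) presupposes a fixed confidence level $\delta$, whereas the bound is asserted for all $t\ge 0$; (ii) destroys the conditional mean-zero property, so the truncated sequence is no longer a martingale difference and must be recentered, with the recentering bias controlled, before Lemma~\ref{lem:Hoeffding} can be applied; and (iii) most importantly, puts the logarithmic factor inside the exponent, dividing $t^2/\sum_n\|X_n\|_{\psi_1}^2$, and that factor depends on $N$ (and $\delta$), so it cannot be ``absorbed by the constant $c$'', which must be universal --- indeed the counterexample above shows no such absorption is possible. A small cosmetic point: the admissibility threshold you write as $t\le \tfrac{2}{1}\sum_n K_n^2/\max_n K_n$ should be $t\lesssim \sum_n K_n^2/\max_n K_n$ with the constant coming from $c_0$, but this does not affect the structure of the argument.
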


\paragraph*{}
Next, we cite the following version of the Hanson-Wright inequality, see \citep{vershynin2018high}.
\begin{lemma}
	[Hanson-Wright]
	Suppose that $r$ has independent sub-Gaussian entries, with $\Expt[r_i]=0,\Expt[r_i^2]=1$, $\max_{i}\|r_i\|_{\psi_2} \le \SGconst$. 
	For any matrix $A$ and $t\ge 0$,
	\begin{align}\label{eq:HansonWright}
		\Pr(|r^\T A r - \tr(A)|\ge t) \le \exp\left( -c\min\{\frac{t}{\SGconst^2\|A\|}, \frac{t^2}{\SGconst^4\|A\|_F^2}\} \right)\,,
	\end{align}
	where $c>0$ is a universal constant.
	\label{lem:HansonWright}
\end{lemma}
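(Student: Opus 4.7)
The plan is to prove the bound via the standard Markov/Chernoff approach applied to the Laplace transform of $r^\T A r - \tr(A)$. Without loss of generality assume $A$ is symmetric (replace $A$ by $(A+A^\T)/2$, which changes neither $r^\T A r$ nor the relevant norms up to constants). Decompose
\begin{equation*}
r^\T A r - \tr(A) = \underbrace{\sum_{i} A_{ii}(r_i^2 - 1)}_{=:D} \;+\; \underbrace{\sum_{i \ne j} A_{ij} r_i r_j}_{=:X}\,,
\end{equation*}
and handle the two parts separately, then combine by a union bound at $t/2$ on each piece.

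For the diagonal term $D$, note that since $r_i$ is sub-Gaussian with $\|r_i\|_{\psi_2}\le K$, the centered square $r_i^2-1$ is sub-Exponential with $\|r_i^2-1\|_{\psi_1}\lesssim K^2$ (a standard fact from the product rule for Orlicz norms, which is stated in the excerpt). Thus $D$ is a sum of independent centered sub-Exponential variables with $\sum_i\|A_{ii}(r_i^2-1)\|_{\psi_1}^2\lesssim K^4\sum_i A_{ii}^2\le K^4\|A\|_F^2$ and $\max_i\|A_{ii}(r_i^2-1)\|_{\psi_1}\lesssim K^2\|A\|$. Bernstein's inequality (Lemma~\ref{lem:Bernstein}, taking the trivial filtration $\m{F}_n=\sigma(r_1,\ldots,r_n)$) then yields the desired tail.

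The main obstacle is the off-diagonal quadratic form $X$. The strategy I would follow is the decoupling/comparison argument. First, bound the MGF by decoupling: introduce an independent copy $r'$ of $r$ and show $\Expt \exp(\lambda X)\le \Expt\exp(c\lambda \,r^\T A' r')$ for some constant $c$, where $A'$ is $A$ with zeroed diagonal; this can be proved by symmetrization (insert independent $\pm 1$'s and average) followed by Jensen. Now condition on $r'$: the linear form $r^\T(A' r')$ is a sum $\sum_i r_i b_i$ with $b=A'r'$, which is sub-Gaussian with proxy $\lesssim K^2\|b\|^2 = K^2\|A'r'\|^2$. Hence
\begin{equation*}
\Expt\left[\exp(\lambda r^\T A' r')\,\big|\,r'\right]\le \exp\!\left(C\lambda^2 K^2\|A'r'\|^2\right).
\end{equation*}
Taking expectation over $r'$, the remaining task is to bound $\Expt\exp(C\lambda^2 K^2 \|A'r'\|^2)$; this is itself a sub-Gaussian chaos, but because $r'$ is sub-Gaussian and $A'$ symmetric, a standard rotation/diagonalization argument (or iterating the same decoupling once more) gives, for $|\lambda|\le c/(K^2\|A\|)$,
\begin{equation*}
\Expt\exp(\lambda X)\le \exp\!\left(C K^4\lambda^2\|A\|_F^2\right).
\end{equation*}
Chernoff's inequality $\Pr(X\ge t/2)\le \inf_{\lambda>0}\exp(-\lambda t/2)\Expt\exp(\lambda X)$, optimized over $\lambda$ in this range (sub-Gaussian when $t\lesssim K^2\|A\|_F^2/\|A\|$, sub-Exponential otherwise), produces the claimed min of Gaussian and exponential tails. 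The trickiest technical point is controlling $\Expt\exp(C\lambda^2 K^2\|A'r'\|^2)$ without inflating the $\|A\|_F$ constant, which is where one must use the constraint on $\lambda$ and the sub-Gaussianity of $r'$ carefully; I would follow the version in \citep{vershynin2018high} rather than attempt a cleaner derivation.
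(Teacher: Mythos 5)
The paper does not actually prove this lemma: it is stated as a citation to \citep{vershynin2018high}, so there is no in-paper argument to compare against. Your sketch is precisely the standard proof from that reference, and its outline is correct: split $r^\T A r-\tr(A)$ into the diagonal sum of independent centered sub-exponential terms (handled by Bernstein, using $\sum_i A_{ii}^2\le\|A\|_F^2$ and $\max_i|A_{ii}|\le\|A\|$) and the off-diagonal chaos, which is treated by decoupling, conditioning on the independent copy to get a sub-Gaussian linear form, and then bounding the resulting MGF $\Expt\exp\bigl(C\lambda^2 \SGconst^2\|A'r'\|^2\bigr)$ for $|\lambda|\lesssim 1/(\SGconst^2\|A\|)$, followed by Chernoff and a union bound. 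Two small caveats, neither fatal: (i) the decoupling step uses i.i.d.\ Bernoulli index-selectors (random splitting of $\{1,\dots,d\}$) rather than $\pm1$ symmetrization, though the Jensen/conditioning idea is the same; and (ii) the step you flag as trickiest is indeed the crux, and the ``rotation/diagonalization'' shortcut only applies after first comparing the sub-Gaussian MGF to the Gaussian one coordinate-wise --- for a general sub-Gaussian $r'$ one cannot diagonalize directly, so the comparison-to-Gaussian lemma (or a second decoupling with a careful MGF estimate) is genuinely needed, which is exactly what the cited proof supplies. Also note that any proof along these lines produces a constant prefactor (from the two-sided tails and the diagonal/off-diagonal union bound) in front of the exponential, which the paper's statement omits; this is a harmless convention issue in the statement rather than a gap in your argument.
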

Lastly, 
the following lemma collects, for convenience, some  properties satisfied by any isotropic random vector $r$ that satisfies a concentration inequality of the form \eqref{eq:HansonWright}.
\begin{lemma}\label{lem:HW-Properties}
	Suppose $r$ is istropic $\Expt[r]=0,\Expt[rr^\T]=I$, and satisfies \eqref{eq:HansonWright} for some $\SGconst>0$.
	\begin{enumerate}
		\item For any matrix $A$ (independent of $r$), $\Expt[|r^\T A r - \tr(A)|^2] \lesssim \SGconst^4 \|A\|_F^2$.
		\item For any vector $v$ (independent of $r$), $\|v^\T r\|_{\psi_2}\lesssim \SGconst\|v\|$. That is, $r$ is sub-Gaussian, with $\|r\|_{\psi_2}\lesssim \SGconst$.
	\end{enumerate}
\end{lemma}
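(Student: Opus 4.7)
The plan is to derive both items directly from the tail inequality \eqref{eq:HansonWright}, with essentially no additional ingredients beyond standard tail-to-moment conversions.

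For Item 1, set $X := r^\T A r - \tr(A)$ and apply the layer cake identity
\[
\Expt[X^2] = \int_0^\infty 2t\, \Pr(|X|\ge t)\,dt.
\]
The right-hand side of \eqref{eq:HansonWright} transitions at the crossover point $t_\star := \SGconst^2\|A\|_F^2/\|A\|$: for $t\le t_\star$ the minimum is the sub-Gaussian branch $t^2/(\SGconst^4\|A\|_F^2)$, and for $t\ge t_\star$ it is the sub-exponential branch $t/(\SGconst^2\|A\|)$. I would split the integral at $t_\star$. The first piece is dominated by the full Gaussian integral $\int_0^\infty 2t e^{-ct^2/(\SGconst^4\|A\|_F^2)}dt = O(\SGconst^4\|A\|_F^2)$. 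The second piece, after the substitution $s = ct/(\SGconst^2\|A\|)$, becomes $O(\SGconst^4\|A\|^2)\cdot \int_{s_\star}^\infty s e^{-s}ds$ with $s_\star = c\|A\|_F^2/\|A\|^2\ge c$; this is at most $O(\SGconst^4\|A\|^2\cdot (s_\star+1)e^{-s_\star})$, which is bounded by $O(\SGconst^4\|A\|_F^2)$ regardless of whether $\|A\|_F^2/\|A\|^2$ is $\Theta(1)$ or large (in the latter case the exponential decay kills any polynomial growth). Summing the two pieces yields the claimed bound.

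For Item 2, specialize the Hanson-Wright inequality to the rank-one PSD matrix $A := vv^\T$, for which $r^\T A r = (v^\T r)^2$, $\tr(A) = \|v\|^2$, $\|A\| = \|A\|_F^2 = \|v\|^2$. This gives
\[
\Pr\!\left(\,\left|(v^\T r)^2 - \|v\|^2\right| \ge s\,\right) \le \exp\!\left(-c\min\!\left\{\tfrac{s}{\SGconst^2\|v\|^2},\tfrac{s^2}{\SGconst^4\|v\|^4}\right\}\right).
\]
To convert this into a tail bound on $v^\T r$ itself, observe that whenever $t\ge \sqrt{2}\,\|v\|$ the event $\{|v^\T r|\ge t\}$ implies $(v^\T r)^2 - \|v\|^2 \ge t^2/2$; plugging $s = t^2/2$ into the display yields $\Pr(|v^\T r|\ge t)\le \exp(-c\min\{t^2/(\SGconst^2\|v\|^2),\, t^4/(\SGconst^4\|v\|^4)\})$. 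In the further range $t\ge C\SGconst\|v\|$, the sub-Gaussian branch $t^2/(\SGconst^2\|v\|^2)$ is the smaller of the two, leaving $\Pr(|v^\T r|\ge t)\le \exp(-ct^2/(\SGconst^2\|v\|^2))$; for $t$ in the remaining bounded range the same estimate holds up to constants, since probabilities are trivially bounded by $1$. Using the standard equivalence between such a tail bound and the Orlicz norm $\|\cdot\|_{\psi_2}$ (cf.\ \citep{vershynin2018high}) gives $\|v^\T r\|_{\psi_2}\lesssim \SGconst\|v\|$, and taking the supremum over unit vectors $v$ yields $\|r\|_{\psi_2}\lesssim \SGconst$.

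The main obstacle is really only bookkeeping: matching constants across the sub-Gaussian/sub-exponential crossover in Item 1, and handling the small-$t$ regime when passing from a tail bound to the $\psi_2$-norm in Item 2. Both steps are routine, so the overall proof is short.
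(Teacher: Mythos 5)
Your proof is correct and follows essentially the same route as the paper's: Item 1 by the layer-cake formula and integrating the Hanson--Wright tail (splitting at the crossover rather than simply summing the two branches, a cosmetic difference), and Item 2 by specializing \eqref{eq:HansonWright} to $A=vv^\T$ with $\|A\|=\|A\|_F=\|v\|^2$ and invoking the standard tail-to-$\psi_2$ equivalence. In fact you are slightly more careful than the paper in handling the centering by $\tr(vv^\T)=\|v\|^2$ and the small-$t$ regime (which, as in the paper, implicitly uses $\SGconst\gtrsim 1$), so nothing further is needed.
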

\begin{proof}
	Start with the first item.
	Write $\Expt[|r^\T A r - \tr(A)|^2] =\int_0^\infty\Pr(|r^\T A r - \tr(A)|^2>s)ds=2\int_0^\infty t \Pr(|r^\T A r - \tr(A)|^2>t)dt$. Using \eqref{eq:HansonWright}, we crudely bound
	\begin{align*}
		\int_0^\infty t \Pr(|r^\T A r - \tr(A)|^2>t)dt
		&\le \int_0^\infty t \exp(-c\frac{t^2}{\SGconst^4\|A\|_F^2})dt + \int_0^\infty t \exp(-c\frac{t}{\SGconst^2\|A\|})dt 
		\lesssim \SGconst^4\|A\|_F^2 \,.
	\end{align*}
	As for the second item, note that $\|vv^T\|=\|vv^T\|_F=\|v\|^2$. Hence,
	\[
	\Pr(|v^\T r|\ge t)=\Pr(r^\T vv^\T r \ge t^2) \lesssim \exp(-\min\{\frac{t^2}{\SGconst^2\|v\|^2},\frac{t^4}{\SGconst^4\|v\|^4}\}) \,.
	\]
	That is, $\|v^\T r\|$ has a Gaussian tail with variance proxy $\SGconst^2\|v\|^2$.
\end{proof}

\end{document}